\newtheorem{theorem}{Theorem}[section]
\newtheorem{lemma}[theorem]{Lemma}
\newtheorem{proposition}[theorem]{Proposition}
\newtheorem{corollary}[theorem]{Corollary}
\theoremstyle{definition}
\newtheorem{definition}[theorem]{Definition}
\newtheorem{ex}[theorem]{Example}
\newtheorem{remark}[theorem]{Remark}
\newtheorem{algo}[theorem]{Algorithm}
\DeclareMathOperator{\Hom}{Hom}
\numberwithin{equation}{section}
\def\inpr{\mathbin{\hbox to 6pt{\vrule height0.4pt width5pt depth0pt \kern-.4pt \vrule height6pt width0.4pt depth0pt\hss}}}
\newcommand{\ep}{\varepsilon}
\newcommand{\si}{\sigma}
\newcommand{\ra}{\rightarrow}
\newcommand{\hra}{\hookrightarrow}
\newcommand{\Llra}{\Longleftrightarrow}
\newcommand{\eA}{\EuScript{A}}
\newcommand{\eB}{\EuScript{B}}
\newcommand{\eC}{\EuScript{C}}
\newcommand{\eE}{\EuScript{E}}
\newcommand{\eH}{\EuScript{H}}
\newcommand{\eJ}{\EuScript{J}}
\newcommand{\eN}{\EuScript{N}}
\newcommand{\eP}{\EuScript{P}}
\newcommand{\eR}{\EuScript{R}}
\newcommand{\eS}{\EuScript{S}}
\newcommand{\eV}{\EuScript{V}}
\newcommand{\eX}{\EuScript{X}}
\newcommand{\eZ}{\EuScript{Z}}
\newskip\aline \newskip\halfaline
\def\skipaline{\vskip\aline}
\def\qedbox{$\rlap{$\sqcap$}\sqcup$}
\def\qed{\nobreak\hfill\penalty250 \hbox{}\nobreak\hfill\qedbox\skipaline}
\def\proofend{\eqno{\mbox{\qedbox}}}
\newcommand{\bse}{{\boldsymbol{e}}}
\newcommand{\bn}{\boldsymbol{n}}
\newcommand{\bv}{{\boldsymbol{v}}}
\newcommand{\bsi}{{\boldsymbol{\sigma}}}
\newcommand{\btau}{{\boldsymbol{\tau}}}
\newcommand{\bom}{{\boldsymbol{\omega}}}
\newcommand{\Bom}{{\boldsymbol{\Omega}}}
\newcommand{\bsN}{\boldsymbol{N}}
\newcommand{\bsV}{\boldsymbol{V}}
\newcommand{\bE}{\mathbb{E}}
\newcommand{\bR}{\mathbb{R}}
\newcommand{\bZ}{\mathbb{Z}}
\newcommand{\pato}{\partial_{\mathrm{top}}}
\newcommand{\pa}{\partial}
\newcommand{\dual}{{\spcheck{}}}
\DeclareMathOperator{\dist}{dist}
\DeclareMathOperator{\cl}{cl}
\DeclareMathOperator{\supp}{supp}
\DeclareMathOperator{\ori}{or}
\DeclareMathOperator{\codim}{codim}
\DeclareMathOperator{\stack}{\textsf{stack}}
\DeclareMathOperator{\jump}{\textsf{jump}}
\DeclareMathOperator{\polygon}{\textsf{polygon}}
\newcommand{\ve}{\varepsilon}
\newcommand{\chio}{\chi_o}
\newcommand{\vfi}{\varphi}
\newcommand{\lan}{\langle}
\newcommand{\ran}{\rangle}
\begin{document}

\date{Started April 12, 2010 
Last modified on {\today}.}
\title{Planar Pixelations and Image Recognition}
\author{Brandon Rowekamp}
\address{Department of Mathematics, University of Notre Dame, Notre Dame, IN 46556-4618. 
}
\email{browekam@nd.edu}
\urladdr{\url{http://www.nd.edu/~browekam}}

\begin{abstract}Any subset of the plane can be approximated by a set of square pixels.  This transition from a shape to its pixelation is rather brutal since it destroys geometric and topological information about the shape.  Using a technique inspired by Morse Theory,  we    algorithmically produce a PL approximation of the original shape using only information from its pixelation.  This approximation converges to the original shape in a very strong sense: as the size of the pixels goes to zero we can recover  important geometric and topological invariants of the original shape  such as Betti numbers,  area, perimeter and curvature measures.
\end{abstract}

\maketitle

\tableofcontents

\section*{Introduction}
\label{s: 0}
\setcounter{equation}{0}
A common problem in computational topology is to try to recover an object embedded in $\bR^n$ when only some distorted version of it is known.  Inspired by digital imaging we consider specifically the problem of a pixelated subset of the plane,  which is to say that it has been replaced by a set of square pixels on a grid.  Since it is common to represent images by a grid of small pixels, it seems intuitive that any object could be recovered from its pixelations.  Indeed,  when the resolution of an image is fine it is difficult to detect any difference from the original using only the human eye, and we may expect to be able to recover even deep geometric invariants.lated subset of the plane,  which is to say that it has been replaced by a set of square pixels on a grid.  Since it is common to represent images by a grid of small pixels, it seems intuitive that any object could be recovered from its pixelations.  Indeed,  when the resolution of an image is fine it is difficult to detect any difference from the original using only the human eye, and we may expect to be able to recover even deep geometric invariants.

  A pixelation associated to a subset of a plane is simply the set of all pixels in a grid of a certain size which touch that subset (see Figure \ref{fig: pixelation0}).  By changing the size of the grid, we can view pixelations of the object with finer and courser resolutions.  We call the pixelation associated to a set $S \subset \bR^2$ created from a grid of side-length $\ep$ $P_\ep(S)$, or the $\ep$-pixelation of $S$ (see Definition \ref{def: pix}).  The variable $\ep$ is called the \emph{resolution} of the pixelation.

\begin{figure}[h]
\centering{\includegraphics[height=2in,width=2in]{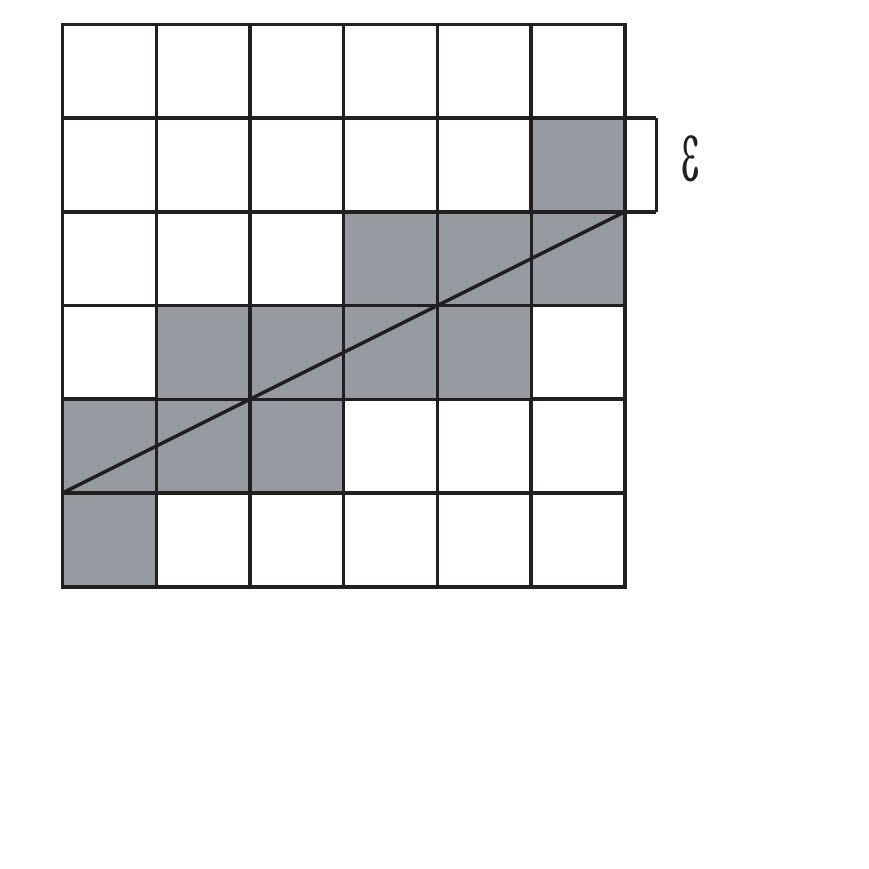}}
\caption{\sl An example pixelation of a line with slope $\frac{1}{2}$}
\label{fig: pixelation0}
\end{figure}

For small resolutions these pixelations seem to resemble the original shape.  However, we need a precise notion of what it means to resemble the original object.  At the very least it seems that the important topological and geometric invariants of the pixelation will be close to the corresponding variants of the original set.  Even when viewing simple examples, we can quickly see that not all invariants will converge for small resolutions.  Most obviously, the total curvature of $P_\ep(S)$ will tend to increase to infinity, since the pixelation contains many corners.  Similarly length of the boundary may not converge.  To see this consider the line segment from $(0,0)$ to $(1,1)$.  The length of the line is $\sqrt{2}$, but each of its pixelations will have a boundary whose length is close to $4$.

We see that pixelations do destroy various geometric invariants.  However, it may be that convergence of these invariants is too much to ask for.  A much simpler request is for the pixelation to eventually converge in Euler characteristic.  Unfortunately not even this is guaranteed.  Some pixelations will contain fake cycles which do not correspond to any cycle of the original set (indeed arbitrarily many may appear even if the original set was contractible).  Worse yet, these fake cycles may not disappear for small resolutions.  Section \ref{s: a} has further information on this phenomenon, but for now it is sufficient to consider the example of a line of slope $1$ and a line of slope $\frac{2}{3}$ (which is shown in Figure \ref{fig: 2hole0}).  The $\ep$-pixelations have certain scale invariance properties which guarantee that a fake cycle will always appear, even for very small $\ep$.

\begin{figure}[ht]
\centering{\includegraphics[height=2in,width=2in]{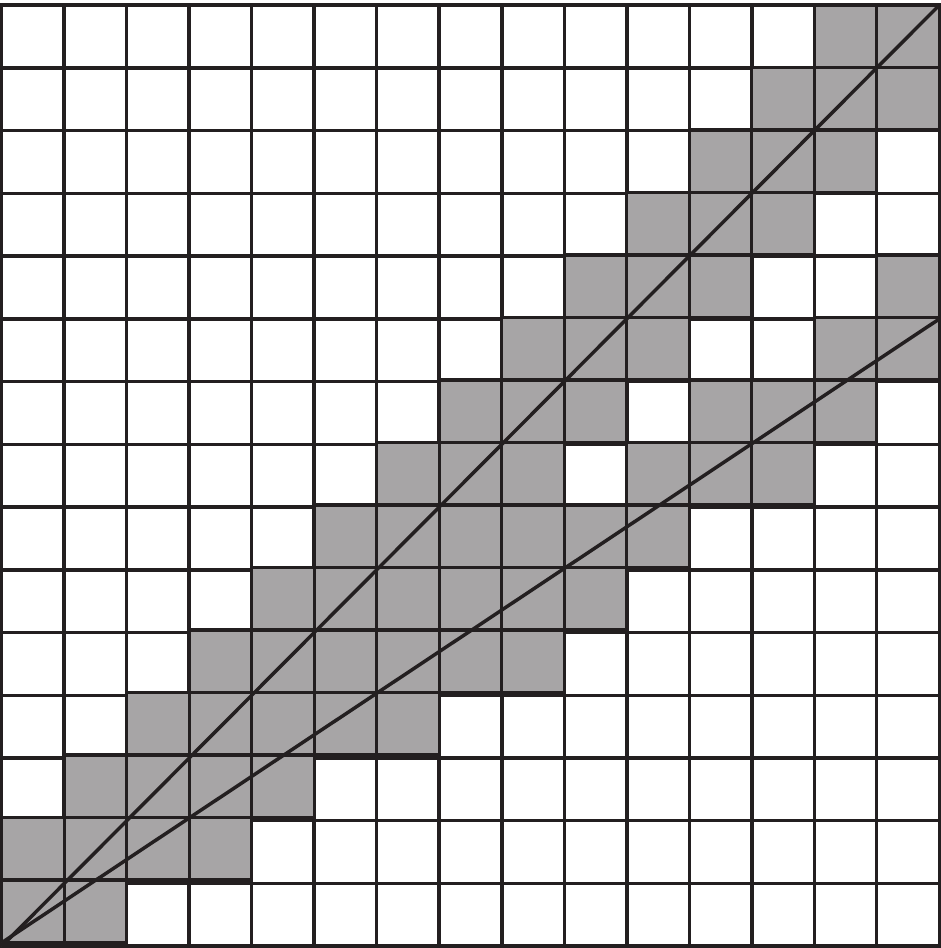}}
\caption{\sl The pixelation of the angle $A\left(1,\frac{2}{3}\right)$ contains two holes.}
\label{fig: 2hole0}
\end{figure}

We see that the process of pixelation can destroy a great deal of mathematical information of the set.  However, to the eye, the pixelation can look very close to the original set.  Therefore it seems that it should be possible to use only the pixelation to reconstruct the original set.  In this paper we will create an explicit algorithm which can reconstruct a PL set  $S$ from only its pixelations.  However, to motivate this algorithm and to generate the theorems needed to show that this algorithm works, we will need to examine simpler examples of pixelations.  

In Section \ref{s: 2} we deal with pixelations of the graphs of $C^2$ functions.  Theorem \ref{thm: ivt}, an analogue of the intermediate value theorem, tells us that these pixelations must be contractible.  Therefore we do not have to worry at this stage about fake cycles.  To approximate the graph of a function we attempt to use a secant line approximation.  This is done by connecting points within the pixelation of the function to create a PL approximation.  However, additionally error will be generated from the fact that our sample points are being selected not from the graph of function, but merely near to the graph.

Theorem \ref{thm: onesecant} gives us bounds on the error between a secant line of our function and the approximate secant line from the pixelation.  Note that these bounds depend both on the resolution $\ep$ and the width of the secant line.  We want to choose our PL approximation in a strict algorithmic way such that these errors vanish.  To do this we use the notion of a spread function.

A \emph{spread function} is simply a function $\sigma:\bR^+\to  \bZ^+  $ such that the following conditions hold:
\[
\lim_{\ep \searrow 0} \sigma(\ep) = \infty, \;\;\lim_{\ep \searrow 0} \ep \sigma(\ep) = 0.
\]
The spread function tells  how wide the secant lines in our approximations should be in terms of the resolution of the pixelation.  That is, since a pixel is defined to be $\ep$ wide, the secant lines should be arranged to be about $\ep \sigma(\ep)$ wide.  Theorem \ref{thm: profileApprox} shows that the two conditions of the spread $\sigma$ guarantee Sobolev $W^{1,p}$ convergence of the secant line approximation.


In Section \ref{s: 3} we generalize the approximation technique to handle \emph{elementary regions}, which are defined to be regions which line in between the graphs of two $C^2$ functions (note that the graph of a $C^2$ function is just a special case of an elementary region).  In this case we also cannot have fake cycles.  We can easily generalize the secant line approximation of functions to elementary regions by approximating the top and bottom regions of the elementary region.  It is then simple to show that we maintain the previous Sobolev $W^{1,p}$ convergence on the boundary of the approximation.

Here the notion of a spread becomes very useful.  By restricting the spread function $\sigma$, we can dramatically improve the convergence of the approximation.  Indeed if require that
\[
\lim_{\ep \searrow 0} \ep (\sigma(\ep))^2 = \infty,
\]
so that $\sigma$ increases very quickly, but still does not increase faster than $\ep$ decreases, we can recover the total curvature of the boundary of the elementary region.  This is shown in Propositions \ref{prop: c2curv}, \ref{prop: piece-curv} and Corollary \ref{cor: piece curv} for sets of increasing generality.  However each of these sets is defined as the region between the graphs of two functions.

At this point although we have created an algorithm which strongly approximates an elementary regions, but we have completely ignored simple sets such as the union of two intersecting  lines.  Such a set  is potentially difficult to approximate, since its pixelation may have more cycles than the original set.  In Section \ref{s: 4} we tackle the problem of homotopy type as it relates to the pixelations of $PL$ sets.

Corollary \ref{cor: seperate} of Theorem \ref{thm: inc} shows that eventually there is a bijective correspondence between the connected components of the pixelation and the connected components of the original set $S$.   A cycle in a (reasonably behaved) planar set corresponds to a  hole, i.e., a bounded connected component of  the complement of the set.  We can again use Theorem \ref{thm: inc} to show that every  hole of the original set  $S$ will eventually correspond to a hole of the pixelation.  Therefore any defect in homotopy type is caused by the addition of holes in the process of pixelation.  This means that to ensure convergence in homotopy type, we need only to delete the extra holes using only information from the pixelation.

An intuitive way to distinguish fake holes from real holes is to note that any real hole must take up an actual area in the plane.  However, fake holes tend to be composed of a relatively  small number of really small  pixels.  Therefore for small resolutions $\ep$ the fake cycles will   have small areas.   Appealing as it may sound, this idea is difficult to implement rigorously because we do not have  an accurate way of defining what ``small area'' means.     We rely instead on a more robust approach inspired from Morse theory.

We consider  the linear function on the  Cartesian plane that associates to each point its $x$-coordinate.  For   simplicity  we assume that its restriction $\ell_S$  to our    PL set $S$ is a  stratified Morse function in the sense of Goresky-MacPherson, \cite{GM}. In our case this simply means that no two vertices of our $PL$ set $S$ lie on the same vertical line.

The topology of the level sets of $\ell_S$ is  determined by the counting   function $\bn$
\[
\bn(x):=\mbox{the number of components of $\ell_S^{-1}(x)$}.
\]
A pixelated version of $\bn$ is the function $\bn_\ep$, where $\bn_\ep(x)$ is  the number of connected components of the column of $P_\ep(S)$ located at $x_0$.  Note that for a cycle to appear in $S$, the function $\bn_S$ must vary and likewise with the pixelation and $\bn_\ep$.  Therefore determining whether a cycle in $P_\ep(S)$ is a fake or really corresponds to a cycle in $S$ is a matter of determining how closely the function $\bn_\ep$ agrees with $\bn_S$.

The Separation Theorem (Theorem \ref{thm: sep}) states that if $x$ is not a critical value of $\ell_S$ then $\bn(x)=\bn_\ep(x)$ for all $\ep$ sufficiently small.   The critical values of $\ell_S$ correspond to   points  jumping points of $\bn$, i.e., points of of  discontinuity  of $\bn$.  Using the Separation  Theorem we prove several results indicating that the jumping points of $\bn_\ep$ and the jumping points of $\bn_S$  are not far apart. This culminates in Proposition \ref{pro: pix noise bound} which states that in the PL case jumping points of $\bn_S$ must appear close to jumping points of $\bn_\ep$ and vice versa.  Here ``close'' means within $\ep \nu(S)$ where $\nu(S)$ is an integer determined by $S$  and called the \emph{noise range}.

We ultimately want to classify an interval around every jumping point of $\bn_\ep$ as ``noise.''  To do this we must estimate the noise range $\nu(S)$, but this is determined by the original set $S$ and thus unknown given only the pixelation.  However if we estimate the noise range using a spread $\sigma$ note that for small $\ep$ we can guarantee that $\sigma(\ep) > \nu(S)$ while $\ep \sigma(\ep)$ is small.  Therefore if we classify an interval of width about $\ep \sigma(\ep)$ around each jumping point of $\bn_\ep$ as noise, we can ensure that eventually outside of the noise $\bn_\ep = \bn_S$ while the noise region remains small.

The noise intervals are important since they are chosen to eventually contain all fake cycles of $P_\ep(S)$ while taking up a vanishingly small portion of the real line.  Therefore to approximate homotopy type within noise intervals we need only cover every cycle (since any cycle appearing will be a fake cycle).  Since noise does not take up much of the plane, we do not need to be careful as to how we approximate within noise intervals, so we will do this by covering each connected component of the noise with the smallest rectangle which covers it.

Call the  components  of the complement of the noise intervals  \emph{regular intervals}.  By the definition of noise, the regular intervals contain no jumping points of $\bn_\ep$.  Therefore $\bn_\ep$ is continuous on the regular intervals, which implies that the parts of $P_\ep(S)$ which lie over regular intervals look like strips that span the entire regular interval.  Each strip can be interpreted as the pixelation of an elementary set and we have explained how to deal with such objects.

Putting together all these facts we obtain  Algorithm \ref{alg: process}  which associates to each $\ep$-pixelation a $PL$ set.    This algorithm works by using the function $\bn_\ep$ and the spread $\sigma$ to divide the pixelation into noise and regular intervals, and then approximates within each interval using the appropriate results.  That is to say within noise intervals it covers each connected component with a rectangle and within regular intervals it connects the tops and bottoms of every $\sigma$-th column.  An example of the result of this algorithm can be seen in Figure \ref{fig: stage5} and the Algorithm is restated in terms of a computer program in Appendix \ref{s: d}.

\begin{figure}[ht]
\centering{\includegraphics[height=2in,width=2in]{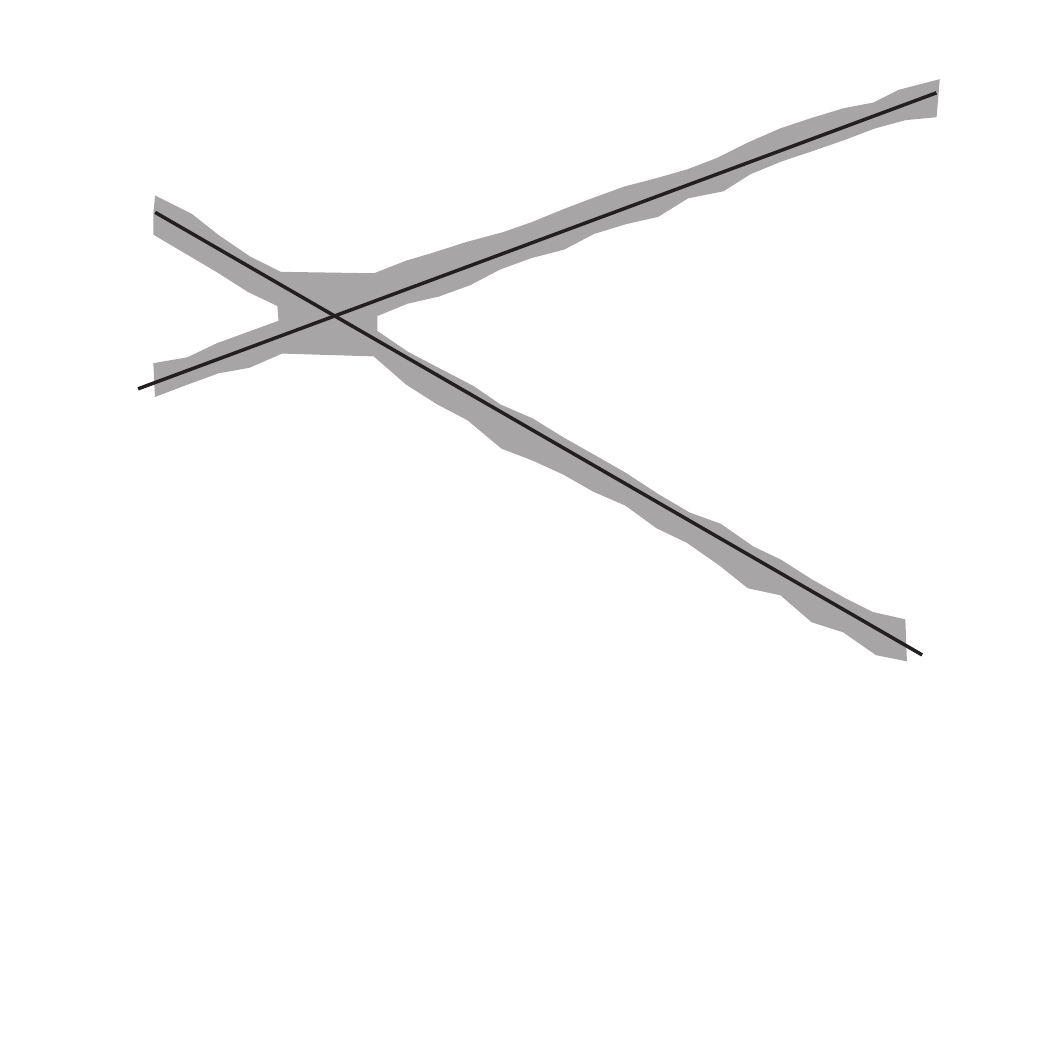}}
\caption{\sl  The result of Algorithm \ref{alg: process} as applied to the intersection of two red lines.  Note that near the point of intersection the approximation resembles a rectangle, since this lies within a noise interval.}
\label{fig: stage5}
\end{figure}

We wish to state that Algorithm \ref{alg: process} captures all the important geometric and topological invariants.  To do this we use the language of \emph{normal cycles}. These are $1$-currents   with supports contained in  the bundle of unit tangent vectors. Appendices \ref{s: b} and \ref{s: c} are included for readers unfamiliar with these objects.  For   purposes  of this  general  discussion it suffices to mention  that the normal cycle of a subanalytic subset $S$ of the plane is a piecewise $C^1$  closed curve   in the unit tangent sphere bundle object which encodes geometric and topological information  about of the set $S$ through integration of certain canonical 1-forms.  

In general the normal cycle can be thought to be approximated by the collection of all unit outer normal vectors of the set.  For example, the normal cycle of a bounded domain with $C^2$-boundary is  the graph of the Gauss map of the boundary.  The normal cycle of the square resembles Figure \ref{fig: normal}.  Of course the normal cycle is actually a subset of the sphere bundle of the plane, not the plane itself.  The fiber of the normal cycle over a point in the  interior of an edge each point will consist of only a single outer normal vector. The fiber over  a corner  consists of an entire quarter-circle (all possible outer normal vectors for the corner).  Therefore the length of the normal cycle will be the perimeter of the square plus $2\pi$, or the perimeter of the square plus its total curvature.  
\begin{figure}[ht]
\centering{\includegraphics[height=2in,width=2in]{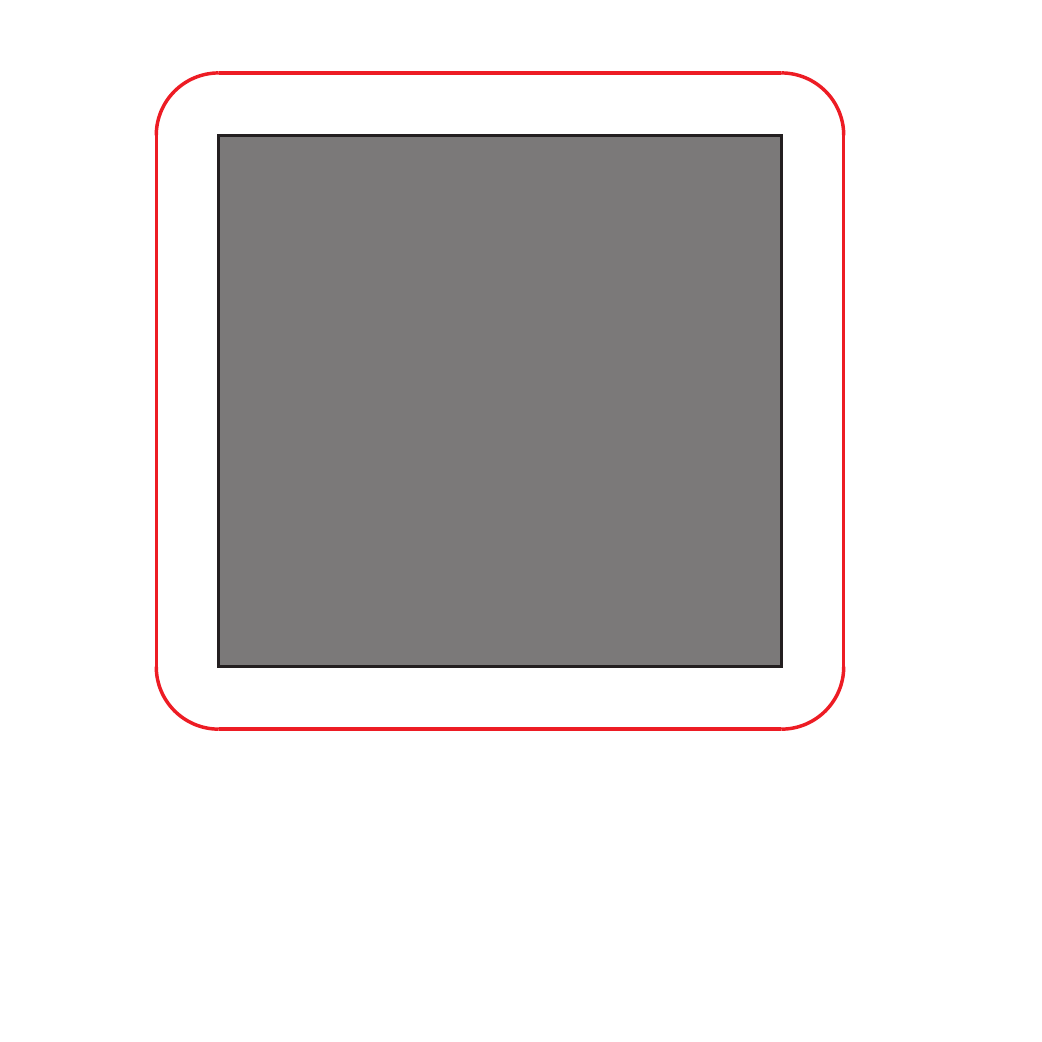}}
\caption{\sl  A representation of the normal cycle of the square.}
\label{fig: normal}
\end{figure}

If we prove that the normal cycles of our approximations converge weakly to the normal cycle of the original set, we will have shown that our approximation recovers the important geometric information of the set.  To prove this we use the powerful General Convergence Theorem proved by Joseph Fu in \cite{Fu1}.  This result is restated in this paper as Theorem \ref{thm: fu}.  This theorem tells us that to have convergence in normal cycles we must prove three things about our approximations.  First, they must all fit within some compact subset of the plane.  This is clear from the Algorithm \ref{alg: process}.  Secondly, the mass of the normal cycles of the approximations cannot explode.  Since the mass of the normal cycle is highly dependent on the total curvature and the perimeter of the set, this follows from Corollary \ref{cor: piece curv}.  Finally we must have the Euler characteristic of the approximation converge to the Euler characteristic of the original set when restricted to an arbitrary half plane.  This requirement  of the approximation theorem is   the most challenging, but it turns  to be true as well.  Therefore Fu's General Convergence Theorem implies convergence of our approximations in normal cycle (Theorem \ref{th: main}).

Theorem \ref{th: main} is a powerful result with many corollaries, since the normal cycle encodes many important invariants.  Most particularly it implies that the approximation generated by Algorithm \ref{alg: process} will recover both the Euler Characteristic as well as the total curvature on the boundary of any generic PL set.

\newpage
\section{Definition of Pixelations}
\label{s: 1}
\setcounter{equation}{0}
In this section we give the definition of a pixelation and detail the most basic of properties of a pixelation.  Questions about recovery of the original set are not tackled until future sections.

\begin{definition} (a) Let $\ve>0$.  Then we define an \emph{$\ve$-pixel} to be the square in $\bR^2$ of the form
\[
S_{i,j}(\ve)=[(i-1)\ve, i\ve)]\times [(j-1)\ve, j\ve)]\subset \bR^2,\;\; i,j\in \bZ.
\]
The \emph{$\ep$ grid of pixels} is the collection of all such $\ep$-pixels.

\noindent (b) A union of finitely many $\ve$-pixels is called an $\ve$-\emph{pixelation}.  The variable $\ve$ is called the $\emph{resolution}$ of the pixelation.

\noindent (c) For any bounded subset $S\subset \bR^2$ we define the $\ve$-\emph{pixelation} of$S$ to be the union of all the $\ve$-pixels that intersect $S$. We denote the $\ve$-pixelation of $S$ by $P_\ve(S)$. The pixelation of a function $f$ is defined to be the piexelation of  its graph $\Gamma(f)$. We will denote this pixelation by $P_\ve(f)$.
\label{def: pix}
\end{definition}

The variable $\ep$ is called the resolution, because we think of a pixelation as a representation of a computerized image.  A smaller choice of $\ep$ will cause a shape to be approximated by a greater number of pixels, which is like saving an image as a higher resolution file.

Observe that  if $\|\bullet\|_\infty: \bR^2\ra [0,\infty)$ is the norm
\[
\|(x,y)\|=\max\{|x|,|y|\},
\]
then the the pixel $S_{i,j}(\ve)$ can be identified  with the $\|\bullet\|_\infty$-closed ball of radius $\ve/2$ and center
\[
c_{i,j}(\ve):=\left(\, (i-\frac{1}{2})\ve, (j-\frac{1}{2})\ve,\right).
\]  
Therefore a pixelation can be thought of as set of points chosen near the original shape.  However, rather than being chosen randomly these points are chosen to be the closest points on a regular lattice.  This by no means makes the problem of a recovering a set from its pixelation trivial.  As we will see, even the homotopy type can be lost during the process of pixelation.  Still, the added structure of a pixelation will allow us to make stronger approximations than can be done with regular pixelations.  For example it is reasonable to expect geometric information like total curvature of the boundary to be preserved in an approximation.

The $\ep$-pixelation associated to a bounded set $S$ is the collection of pixels in this grid that contain a part of $S$.  A pixelation is a sort of ``fattening'' of the set, since we include a pixel if any part of the set lies within the pixel (see Figure  \ref{fig: pixelation}.)  In this way it is somewhat similar to a tube around the set.  However a pixelation differs from a tube in that the boundary of any pixelation will have corners, and that the distance from the boundary to the set may vary significantly (since pixelations are formed by taking a collection of squares in a grid.)  As we will see in the following sections, the fact that pixelations approximate via a grid means that the geometric properties of pixelations may not converge to the geometric properties of the original set as $\ep \to 0$ (for example, the normal cycle of the pixelation will not converge to the normal cycle of the original set.)  The good news is that geometric information about the original set can be obtained from its pixelations in indirect ways.

\begin{figure}[h]
\centering{\includegraphics[height=3in,width=3in]{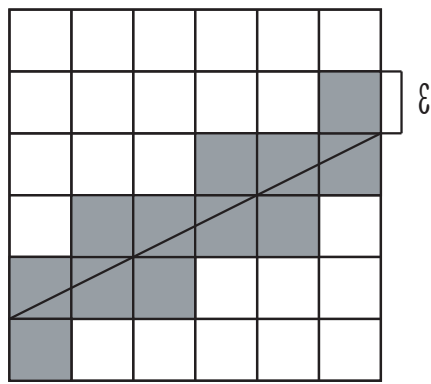}}
\caption{\sl An example pixelation of a line with slope $\frac{1}{2}$}
\label{fig: pixelation}
\end{figure}

It is easy to see that the pixelation of an set can be fundamentally different from a set.  For example, every pixelation of a line segment will have many corners (as long as the line is not parallel to the $x$ or $y$-axis.)  This implies that as $\ep$ becomes small, the total curvature of the boundaries of pixelations of a line segment will increase in an unbounded fashion.  This contradicts the fact that the line itself has no curvature and shows that essential geometric features of the set are not preserved in the pixelation.

In fact, though it is more difficult to see, pixelations do not preserve the topological properties of the set.  Consider the set $S$ consisting of a line of slope $\frac{1}{2}$ and a line of slope $1$ which both start at the origin.  Then it can be shown that for every $\ep$, $P_\ep(S)$ contains a hole which prevents $P_\ep(S)$ from being contractible (see Figure \ref{fig: halfhole}.)  The position of the hole depends on $\ep$, but it will always exist for any $\ep$.  This example is far from unique.  In fact, by altering the slope of the two lines we can create a set whose pixelations contain any desired number of holes.   This shows that pixelations do not preserve the topological properties of the underlying set.

\begin{figure}[h]
\centering{\includegraphics[height=3in,width=3in]{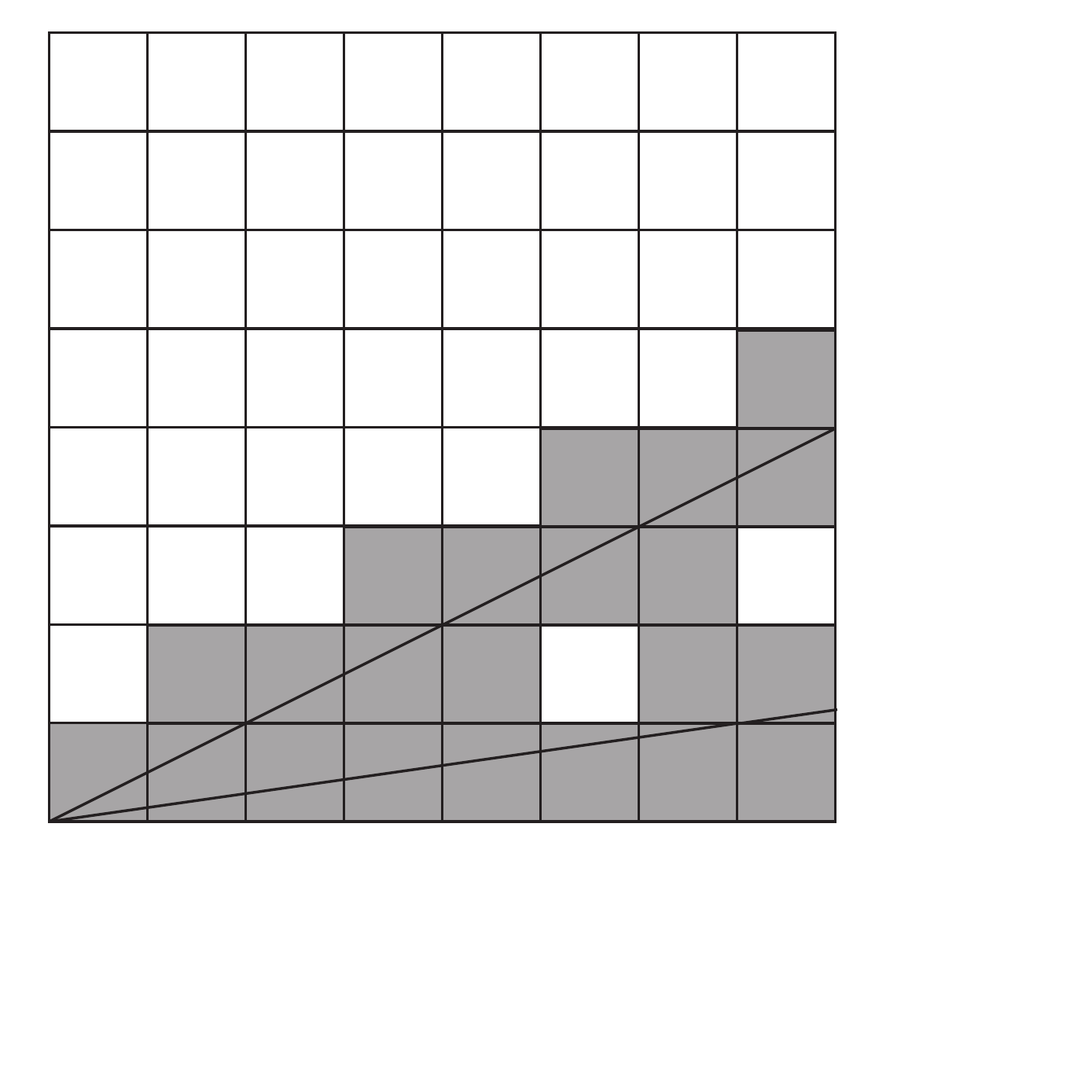}}
\caption{\sl If $S$ consists of a line of slope $\frac{1}{2}$ and a line of slope $1$ starting at the origin, then $P_\ep(S)$ is not contractible for any $\ep$.}
\label{fig: halfhole}
\end{figure}

However, when $\ep$ is very small, it is easy for the human eye to detect the kind of shape that probably generated a pixelation.  Mathematically this corresponds to the fact that it is possible to create an approximation from the information in the pixelation which will respect topological and geometric features.  The approximation can be algorithmically generated, but we delay defining it until much later in the paper (since the motivation for it rests on many properties of pixelations.) 

The remainder of the paper will examine specifically what pixelations preserve and what they destroy.  Furthermore it will explain how to recover the information ``lost'' in the process of creating a pixelation.

We end this section with a few basic properties of pixelations which will be useful.

\begin{proposition}
For any two sets $U$ and $V$ we have
\[P_\ep(U \cup V) = P_\ep(U) \cup P_\ep(V)\]
\label{pro: union}
\end{proposition}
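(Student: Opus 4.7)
The plan is to argue directly from the definition of pixelation. Recall that $P_\ep(S)$ is the union of all $\ep$-pixels $S_{i,j}(\ep)$ that intersect $S$. So the natural strategy is to show that a given pixel $S_{i,j}(\ep)$ is included in $P_\ep(U\cup V)$ if and only if it is included in $P_\ep(U)\cup P_\ep(V)$, and then assemble the two unions.

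First I would fix an arbitrary pixel $S_{i,j}(\ep)$ and observe the elementary set-theoretic identity
\[
S_{i,j}(\ep)\cap (U\cup V) \;=\; \bigl(S_{i,j}(\ep)\cap U\bigr)\cup \bigl(S_{i,j}(\ep)\cap V\bigr).
\]
Consequently the left side is nonempty precisely when at least one of the two sets on the right is nonempty. In other words, $S_{i,j}(\ep)$ intersects $U\cup V$ if and only if it intersects $U$ or it intersects $V$, i.e.\ if and only if $S_{i,j}(\ep)$ appears in at least one of the pixelations $P_\ep(U)$, $P_\ep(V)$.

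To conclude I would simply take the union over all such pixels:
\[
P_\ep(U\cup V) \;=\; \bigcup_{S_{i,j}(\ep)\cap (U\cup V)\neq \emptyset} S_{i,j}(\ep) \;=\; \bigcup_{S_{i,j}(\ep)\cap U\neq \emptyset} S_{i,j}(\ep) \;\cup\; \bigcup_{S_{i,j}(\ep)\cap V\neq \emptyset} S_{i,j}(\ep) \;=\; P_\ep(U)\cup P_\ep(V).
\]
There is no real obstacle here; the only thing to be slightly careful about is to note that the definition requires $U$, $V$ (and hence $U\cup V$) to be bounded so that all three pixelations are finite unions of pixels, but this plays no role in the equality itself.
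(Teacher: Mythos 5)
Your proof is correct and is exactly the argument the paper has in mind: the paper simply states that the identity follows immediately from the definition of the pixelation as the union of all pixels intersecting the set, and your write-up spells out that one-line justification via the distributivity of intersection over union. Nothing further is needed.
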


\begin{proof}
This follows immediately from the definition of a pixelation as the union all pixels which intersect a set.
\end{proof}

\begin{proposition}
For each $\ep$ define $T_\ep^\infty(S)$ to be
\[
T_\ep^\infty(S) := \{x \in \bR^2: \exists p \in S \text{ such that } \|x - p\|_\infty \le \ep\}
\]
i.e. the tube of radius $\ep$ of $S$ in the $\| \bullet \|$ norm.
Then
\[
P_\ep(S) \subset T_\ep^\infty(S)
\]  
\end{proposition}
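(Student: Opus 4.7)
The plan is to argue pixel-by-pixel: write $P_\ep(S)$ as the union of those pixels $S_{i,j}(\ep)$ that meet $S$, and show that each such pixel is entirely contained in $T_\ep^\infty(S)$. Since a union of subsets of $T_\ep^\infty(S)$ is again a subset of $T_\ep^\infty(S)$, this suffices.

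So fix a pixel $S_{i,j}(\ep)$ appearing in $P_\ep(S)$. By the definition of pixelation, there exists some $p\in S\cap S_{i,j}(\ep)$. I would then take an arbitrary $x\in S_{i,j}(\ep)$ and bound $\|x-p\|_\infty$ using the geometry of the pixel. The cleanest way is to use the identification noted just after Definition \ref{def: pix}: $S_{i,j}(\ep)$ is essentially the $\|\bullet\|_\infty$-ball of radius $\ep/2$ about the center $c_{i,j}(\ep)$. By the triangle inequality in the $\|\bullet\|_\infty$ norm,
\[
\|x-p\|_\infty \le \|x-c_{i,j}(\ep)\|_\infty + \|c_{i,j}(\ep)-p\|_\infty \le \tfrac{\ep}{2}+\tfrac{\ep}{2}=\ep.
\]
(Alternatively, one reads off directly from $S_{i,j}(\ep)=[(i-1)\ep,i\ep)\times[(j-1)\ep,j\ep)$ that both coordinate differences $|x_k-p_k|$ are strictly less than $\ep$.) This shows $x\in T_\ep^\infty(S)$, so $S_{i,j}(\ep)\subset T_\ep^\infty(S)$, and taking the union over all pixels meeting $S$ gives $P_\ep(S)\subset T_\ep^\infty(S)$.

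There is no real obstacle here; the statement is essentially a repackaging of the definitions once one observes the diameter bound on a single pixel. The only thing worth being careful about is the choice of norm: the inclusion is written in terms of $\|\bullet\|_\infty$ precisely because pixels are axis-aligned squares, so an analogous statement in Euclidean norm would require enlarging the tube radius by a factor of $\sqrt{2}$.
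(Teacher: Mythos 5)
Your proof is correct and follows the same route as the paper's: take a point of $P_\ep(S)$, find a point of $S$ in the same pixel, and bound their $\|\bullet\|_\infty$-distance by the pixel's diameter $\ep$. You merely spell out the diameter bound (via the triangle inequality through the pixel center) that the paper leaves implicit.
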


\begin{proof}
Suppose $x \in P_\ep(S)$.  Then $\exists p \in S$ such that $x$ and $p$ lie within the same $\ep$ pixel.  But by the definition of pixels, this implies that $\|x-p\|_\infty \le \ep$.  Therefore $x \in T_\ep^\infty(S)$.
\end{proof}

\newpage
\section{Pixelations of Functions and First-Order Approximation}
\label{s: 2}
\setcounter{equation}{0}
In order to reach our overall goal of recovering sufficiently nice sets from their pixelations, we must start by examining simple sets.  The graphs of $C^2$ functions on compact domains provide a good starting point.  We will find that in this case many analogues of basic theorems from calculus appear in a pixelated version, and these facts will allow us to approximate the graphs of these functions.

Before we proceed in our investigation we need to introduce a basic vocabulary that will be used throughout the paper.

\begin{definition}  Fix $\ve>0$ and a bounded set $S\subset \bR^2$.

\begin{enumerate}

\item A point $a\in\bR$ will be called $\ep$-generic if $x\in\bR\setminus \ep\bZ$.   For such a point $a$ we denote by  $I_\ve(a)$ the interval  of the form $[n\ve, (n+1)\ve]$, $n\in\bZ$ that contains $a$. 

\item  For an interval $[a,b] \subset \bR$ we define the vertical strip
\[
\eS_{[a,b]} := [a,b] \times \bR
\]
For every $k\in \bZ$ we  denote by $\eS_{\ep,k}$ the vertical strip
\[
[k \ve, (k+1) \ve] \times \bR = \eS_{[k\ep, (k+1)\ep]}
\]
 For any $\ep$-generic point $a\in\bR$ we denote by $\eS_\ep(a)$ the strip  
\[
I_\ve(a)\times\bR=S_{\ve,k}, \;\; k:=\lfloor x/k\rfloor.
\]

\item A \emph{column} of $P_\ve(S)$ is   the intersection  of $P_\ve(S)$ with a vertical strip $\eS_{\ep,k}$, $k\in\bZ$.  The connected components of  a column are called \emph{stacks}.

\item For every $\ep$-generic $a \in \bR$, we define the \emph{column} of a pixelation $P_\ep(S)$ over $a$ to be the set  
\[
C_\ve(S,a):=\eS_\ve(a)\cap P_\ve(S).
\]
In other words,   $C_\ep(S,a)$  is the union of the pixels in $P_\ep(S)$ which intersect the vertical  line $\{x=a\}$.   When $S$ is the graph of a function $f$, we will use the notation $C_\ep(f, a)$ to denote   the column over $a$ of the pixelation $P_\ve(f)$.

\item  We define the \emph{top, bottom} and respectively \emph{height}  of  a column to be the quantities
\[
T_\ep(S,a):= \sup \{y: (a,y)\in C_\ep(S,a)\},
\]
\[
B_\ep(S,a):=\inf \{y: (a,y)\in C_\ep(S,a)\},
\]
and respectively 
\[
h_\ep(S,a) := \frac{1}{\ve}\bigl(\,T_\ep(S,a) - B_\ep(S,a)\,\bigr).
\]
\end{enumerate}
 \qed
 \label{def: bu}
\end{definition}

Since stacks are constrained to be within the same column, a stack can usually be thought of as two pixels together with all the pixels that fall vertically in between them.


This language allows us to state an analogue of the Intermediate Value Theorem.

\begin{theorem}[Pixelated Intermediate Value Theorem] If $f:[a,b]\ra \bR$ is a continuous function, then for every $x\in[a,b]\setminus \ve\bZ$, the column  $C_\ep(f,x)$ consists of exactly one stack.
\label{thm: ivt}
\end{theorem}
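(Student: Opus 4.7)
The plan is a direct unpacking of the definitions combined with the classical intermediate value theorem for continuous real-valued functions.

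First, I would fix $x \in [a,b] \setminus \ep \bZ$ and let $k \in \bZ$ be the unique integer with $k\ep < x < (k+1)\ep$. Since each pixel $S_{i,j}(\ep) = [(i-1)\ep, i\ep) \times [(j-1)\ep, j\ep)$ meets the vertical line $\{x\} \times \bR$ only when $i = k+1$, the column $C_\ep(f,x)$ is exactly the union of those pixels $S_{k+1,j}(\ep)$ that belong to $P_\ep(f)$. Thus the column is determined by the set of column indices
\[
J := \bigl\{\, j \in \bZ : \Gamma(f) \cap S_{k+1,j}(\ep) \ne \emptyset \,\bigr\}.
\]
By definition of the graph, $j \in J$ if and only if there exists $t \in D := I_\ep(x) \cap [a,b]$ with $f(t) \in [(j-1)\ep, j\ep)$. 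In other words,
\[
J = \bigl\{\, j \in \bZ : f(D) \cap [(j-1)\ep, j\ep) \ne \emptyset \,\bigr\}.
\]

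The key observation is that $D$ is an interval of the real line containing $x$, hence connected, and $f$ is continuous, so the image $I := f(D)$ is a connected subset of $\bR$, i.e., itself an interval. The proof reduces to showing that the set of $j$'s for which a half-open interval $[(j-1)\ep, j\ep)$ of length $\ep$ meets the interval $I$ is a contiguous set of integers. If $j_1 < j_2$ both lie in $J$ and $j_1 < j < j_2$, then $I$ contains a point $y_1 < j_1\ep \le (j-1)\ep$ and a point $y_2 \ge (j_2-1)\ep \ge j\ep$; since $I$ is an interval, $[y_1,y_2] \subset I$, and in particular $[(j-1)\ep, j\ep) \subset I$, so $j \in J$. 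Therefore $J$ is an interval of integers.

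A contiguous set of vertical pixels in a single column is, by the definition of a stack, exactly one stack (it equals the union of its top and bottom pixel together with all pixels vertically between them), completing the proof. The argument is essentially a bookkeeping exercise: the only nontrivial input is the connectedness of $f(D)$, i.e., the classical IVT. I do not foresee a real obstacle; the only mild care required is the half-open nature of the pixels and the fact that $D$ might be a half-open interval at the endpoints of $[a,b]$, neither of which affects the argument since $D$ is still connected.
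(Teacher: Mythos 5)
Your proposal is correct and rests on exactly the same input as the paper's proof: the classical intermediate value theorem applied to $f$ on the subinterval $I_\ep(x)$, which forces the set of pixel rows met by the graph to be contiguous. The paper phrases this as a proof by contradiction (two stacks would produce a band $[c\ep,d\ep]$ missed by $f$ between attained values), while you argue the contrapositive directly via connectedness of $f(D)$; the content is the same, and your bookkeeping of the index set $J$ is accurate.
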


\begin{proof} We argue by contradiction. Suppose that the column $C_\ep(f,x)$  has at least two stacks.  This means that within the column over $x$ there are two stacks with a gap of empty pixels in between them.  This implies that there  exist  an  interval $[c \ep, d \ep]$, not contained in the range of $f$,  and real numbers $ x_1, x_2 \in I_\ve(x)$ such that 
\[
f(x_2) < c \ep < d\ep <f(x_1).
\]
  Since $f$ is continuous, this contradicts the Intermediate Value Theorem.  Therefore the column over $x$ has only one stack.
\end{proof}


Note that the Pixelated Intermediate Value Theorem immediately implies that a cycle cannot appear in the pixelation of a continuous function.  Therefore the pathological behavior witnessed in Appendix \ref{s: a} will not occur in the pixelations of $C^2$ functions, which is our first hint that pixelations of $C^2$ functions are the nicest type of pixelation to work with.

This analogue of the Intermediate Value Theorem tells us that we can think of the pixelation of a function that assign a stack of pixels to each value in the domain of the function.  The stacks themselves are worth investigating.  To generate a stack in the pixelation, the function must attain values near the top and bottom of the stack within that column.  This means that the average change of a function over a column must be related to the height of the column.  This relation will be shown in the following two theorems.  The first tells us that the height of a stack is bounded by the derivative of the function.

\begin{proposition}
Let $f: [a,b]\ra \bR$ be a Lipschitz continuous  function. We denote  by $\|f'\|_\infty$ the best Lipschitz constant, i.e.,
\[
\|f'\|_\infty:=\sup_{x\neq y} \frac{|f(x)-f(y)|}{|x-y|}.
\]
Then  for any $\ep>0$  and any $x\in [a,b]\setminus \ep\bZ$ we have
\[
h_\ep(f,x) \le ||f'||_\infty+2.
\]  
\label{thm: difbound}
\end{proposition}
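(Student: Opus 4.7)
The plan is to combine the Pixelated Intermediate Value Theorem (Theorem~\ref{thm: ivt}) with the Lipschitz bound on the oscillation of $f$ over the subinterval $I_\ep(x)$. Because $f$ is continuous, Theorem~\ref{thm: ivt} tells us that the column $C_\ep(f,x)$ is a single stack, so $T_\ep(f,x)$ is the top edge of some pixel actually hit by the graph, and $B_\ep(f,x)$ is the bottom edge of some pixel actually hit by the graph.

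Concretely, I would choose sample points $x_1, x_2 \in I_\ep(x)$ such that $(x_1, f(x_1))$ lies in the top pixel of the stack and $(x_2, f(x_2))$ lies in the bottom pixel. Since each pixel has vertical extent $\ep$ by Definition~\ref{def: pix}, this gives the two-sided sandwich
\[
T_\ep(f,x) - \ep \le f(x_1) < T_\ep(f,x),\qquad B_\ep(f,x) \le f(x_2) < B_\ep(f,x) + \ep.
\]
Subtracting these, $T_\ep(f,x) - B_\ep(f,x) < f(x_1) - f(x_2) + 2\ep$. Now $x_1, x_2$ both lie in the length-$\ep$ interval $I_\ep(x)$, so the Lipschitz hypothesis gives $f(x_1) - f(x_2) \le \|f'\|_\infty\,\ep$. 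Combining and dividing by $\ep$ produces $h_\ep(f,x) \le \|f'\|_\infty + 2$, as claimed.

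There is essentially no obstacle here: the argument amounts to observing that a stack overshoots the graph's oscillation on $I_\ep(x)$ by at most one pixel height on each end. The only delicate point is the appeal to Theorem~\ref{thm: ivt}: without the single-stack conclusion, $T_\ep$ and $B_\ep$ could a priori be realized by disjoint pieces of the column, and we could not bound $T_\ep - B_\ep$ by the oscillation of $f$ on $I_\ep(x)$ at all.
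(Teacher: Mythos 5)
Your argument is correct and is essentially the paper's proof run forwards instead of by contradiction: both rest on the same two facts, namely that the extreme pixels of the column each meet the graph at points with abscissae in $I_\ep(x)$ (so the stack overshoots the oscillation of $f$ on that interval by at most one pixel at each end), and that this oscillation is at most $\|f'\|_\infty\,\ep$ by the Lipschitz hypothesis. One small remark: the appeal to Theorem \ref{thm: ivt} is not actually needed, and your closing claim that the bound would fail without it is not right --- even if the column consisted of several stacks, the topmost and the bottommost pixels of the whole column each intersect $\Gamma(f)$ at points of $I_\ep(x)$ by the very definition of the pixelation, so the same sandwich and the same Lipschitz estimate go through verbatim.
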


\begin{figure}[h]
\centering{\includegraphics[height=3in,width=3in]{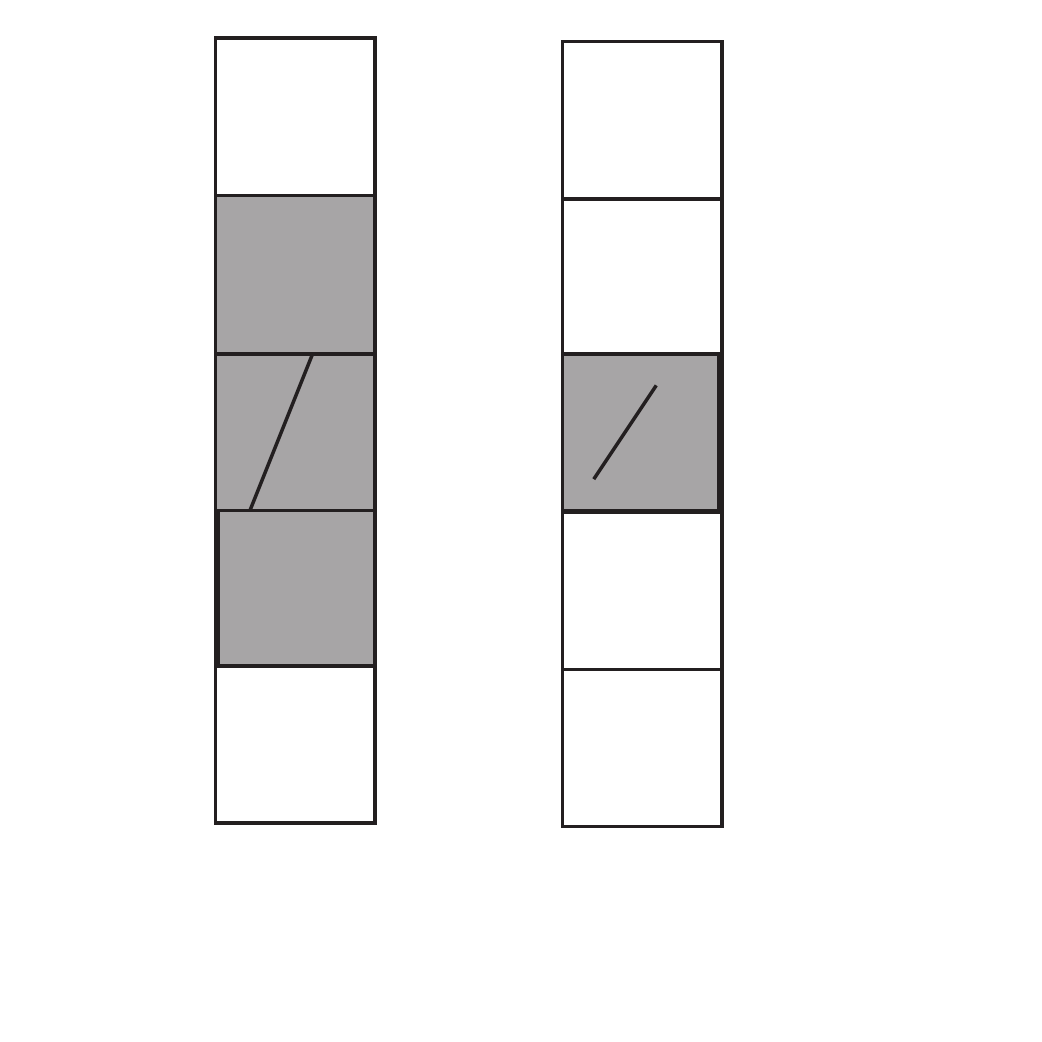}}
\caption{\sl If the line on the left were any shorter it would generate the pixelation on the left. Thus, the minimum range of a function in a column is two less than the height of the column.}
\label{fig: topbottom}
\end{figure}

\begin{proof} We argue by contradiction. This means  that there exist $\ep>0$ and $x\in [a,b]\setminus \ep\bZ$ such that 
\begin{equation}
h_\ep(f,x) > ||f'||_\infty + 1.
\label{eq: 241}
\end{equation}
 Set $n:=\lfloor x/\ep\rfloor\in\bZ$ so that   $n\ve < x< (n+1)\ve$. The inequality (\ref{eq: 241})  is equivalent to the condition
\[
T_\ep(f,x) - B_\ep(f,x) > \ep(||f'||_\infty+2).
\]
This implies that 
\[
\exists c, d\in \bigl( n\ve, (n+1)\ve\,\bigr)\;\;\mbox{such that}\;\;|f(c)-f(d)|> ||f'||_\infty \ep,
\]
 because the pixelation can be one pixel higher or lower than the graph of $f$; see Figure \ref{fig: topbottom}. We have thus  found $c$ and $d$ such that 
\[\frac{|f(c) - f(d)|}{|c-d|} > \frac{||f'||_\infty\ep}{\ep} = ||f'||_\infty\]
But this contradicts the  definition of $\|f'\|_\infty$. 
\end{proof}

\begin{remark} (a) Observe that if $x\in[a,b]\cap \ep\bZ$ then
\[
h_\ep(f,x)\leq 2(\|f'\|_\infty+1).
\]
(b) The above  theorem can be used to bound the height of a stack by the derivative  of $f$ over that stack by considering the restriction of $f$ to that column.\qed
\end{remark}

The next theorem suggests that  the height of a stack is close, in a crude way, to the absolute value of the derivative in the column for sufficiently small $\ep$.  The intuition is that if $f$ is $C^2$, then any sufficiently small section of its graph will be very close to the graph of a line, and the height of a stack in the pixelation of the graph of a line must be very close to the slope of that line.

\begin{proposition}
Let $f$ be a $C^2$ function on $[a,b]$.  Then, for any  $\ep>0$ and any $x_0\in [a,b]\setminus \ep\bZ$ we have
\[
|f'(x_0)|\leq  h_\ep(f,x_0)| + \|f''\|_\infty\ve +2.
\]
\end{proposition}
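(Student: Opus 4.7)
The plan is to bracket $|f'(x_0)|$ by applying the Mean Value Theorem on the pixel interval $I := I_\ep(x_0)$ and then transporting the resulting estimate to $x_0$ via the $C^2$ regularity of $f$. Writing $M := \sup_I f$, $m := \inf_I f$, and $n := \lfloor x_0/\ep\rfloor$, the key geometric observation is that the top and bottom pixels of the column $C_\ep(f, x_0)$ are precisely the pixel rows containing $M$ and $m$, which forces $T_\ep(f, x_0) \ge M$ and $B_\ep(f, x_0) \le m$, and hence
\[
\ep \cdot h_\ep(f, x_0) \;=\; T_\ep(f, x_0) - B_\ep(f, x_0) \;\ge\; M - m.
\]
This pixel-to-oscillation bound is complementary to the one used in the proof of Proposition \ref{thm: difbound}, which exploited the reverse estimate $T_\ep - B_\ep \le M - m + 2\ep$.

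Next, applying the Mean Value Theorem to $f$ on $I$ (or on $I\cap[a,b]$ in the boundary case where $x_0$ lies within $\ep$ of $a$ or $b$) produces a point $\xi \in I$ with
\[
|f'(\xi)| \;\le\; \frac{|f((n+1)\ep) - f(n\ep)|}{\ep} \;\le\; \frac{M - m}{\ep} \;\le\; h_\ep(f, x_0).
\]
The $C^2$ hypothesis now transfers this from $\xi$ to $x_0$: since $f'$ is $\|f''\|_\infty$-Lipschitz and $|x_0 - \xi| \le \ep$,
\[
|f'(x_0)| \;\le\; |f'(\xi)| + \|f''\|_\infty\,|x_0 - \xi| \;\le\; h_\ep(f, x_0) + \|f''\|_\infty\,\ep,
\]
which already yields the stated inequality; the additive $+2$ in the statement is the same sort of slack as in Proposition \ref{thm: difbound}, absorbing the boundary case $I_\ep(x_0)\not\subset [a,b]$ and any off-by-one pixel rounding in the top/bottom comparisons.

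The argument is essentially routine; there is no single hard step. The only bookkeeping worth flagging is the boundary case, in which the pixel interval $I_\ep(x_0)$ sticks out of $[a,b]$ and one must shrink it to $I\cap [a,b]$ before applying MVT; the inequalities $T_\ep\ge M$ and $B_\ep\le m$ then hold for the extrema taken over this truncated interval, at the cost of at most one pixel's worth of slack on each side of the column, which is why the conservative $+2$ in the statement is convenient.
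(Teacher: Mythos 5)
Your argument is correct on the main case and takes a genuinely different route from the paper. The paper's proof linearizes $f$ at $x_0$, uses Taylor's theorem to show the graph of the linearization $L$ stays within $\frac{1}{2}\|f''\|_\infty\ep^2\le\ep$ of the graph of $f$ over the pixel interval, deduces $h_\ep(L,x_0)\le h_\ep(f,x_0)+\|f''\|_\infty\ep+2$, and finishes with the observation that the column of a line of slope $f'(x_0)$ has height at least $|f'(x_0)|$. You instead bound the oscillation from below by the column height, extract a point $\xi$ via the Mean Value Theorem with $|f'(\xi)|\le h_\ep(f,x_0)$, and transport to $x_0$ using the $\|f''\|_\infty$-Lipschitz property of $f'$. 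Your route avoids pixelating the auxiliary line altogether and, in the interior case $I_\ep(x_0)\subset[a,b]$, actually yields the sharper bound $|f'(x_0)|\le h_\ep(f,x_0)+\|f''\|_\infty\ep$ with no $+2$: the inequality $T_\ep-B_\ep\ge M-m$ costs nothing, whereas the paper's comparison of the two columns pays one pixel at the top and one at the bottom. One caution: your closing claim that the $+2$ ``absorbs'' the boundary case is optimistic. If $I_\ep(x_0)$ sticks out of $[a,b]$, the MVT difference quotient is taken over the truncated interval of width $w<\ep$, and the resulting bound degrades multiplicatively by a factor of $\ep/w$, not by an additive $2$; a steep $f$ on a very short effective interval defeats the stated inequality outright. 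This is not really a flaw of your argument, though --- the paper's own proof has the identical blind spot, since its Taylor estimate holds only on $I_\ep(x_0)\cap[a,b]$ while $h_\ep(L,x_0)$ is computed over the full pixel width --- so the issue lies with the proposition's formulation rather than with either proof.
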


\begin{proof} Recall that $I_\ve(x_0)$ indicates the interval of the form $[n\ve, (n+1)\ve]$, $n\in\bZ$, that contains $x_0$.  Let $L$  be the linearization of $f$ at $x_0$, i.e., the function
\[
L:\bR\ra \bR,\;\;L(x):=(x - x_0)f'(x_0) + f(x_0).
\]
 From Taylor's Theorem we deduce that \[
|L(x)-f(x_0)| \le \frac{||f''||_\infty}{2} (x-x_0)^2 \le \frac{1}{2} ||f''||_\infty \ep^2\leq \ep,\;\;\forall x\in I_\ve(x_0)\cap[a,b].
\]
This  implies that
\[
T(L,x_0)\leq T(f,x_0)+\frac{1}{2}\|f''\|_\infty\ve^2+\ve,\;\; B(L,x_0)\geq B(f,x_0)- \frac{1}{2}\|f''\|_\infty\ve^2-\ve.
\]
Hence,
\[
h_\ve(L,x_0) \leq h(f,x_0)+ \|f''\|_\infty\ve+2.
\]
On the other hand,   $L$ changes  by $f'(x_0) \ep$ over the width of a pixel, which implies
\[
|f'(x_0)| \le  h_\ep(L,x_0) 
\]
\end{proof}

We would now like to approximate $f$ through its pixelations.  An intuitive way to do this is by creating a $PL$ function whose graph lies within $P_\ep(f)$,  since this approximation would generate a similar pixelation.  However, in practice this method requires making many choices, and it can be cumbersome to check that the graph of the function lies within the pixelation of $f$.  A better approach is to form a $PL$ function by connecting points within $P_\ep(f)$ by straight line segments.  In order to preserve higher order properties of the function, we must be careful when choosing these points.  

For example,  suppose that the vertices of the $PL$ approximation are   centers of the top pixels in  each column.  Then the slopes of each line making up the $PL$ function would have integer slopes, and could never approximate the derivative of $f$ arbitrarily well.  We could increase the accuracy of the approximation by connecting the maxima of every other column, or every third column.  However, doing this decreases the speed in which the approximation converges pointwise to $f$.  This is the central tension involved in approximating a function from its pixelation.

Before we further describe how to approximate a function from its pixelation, we show how to at least find the average change of the function, i.e.,  the slope of the secant line connecting the first and last points of $f$.

\begin{theorem}
Let $f$ be a $C^2$ function on the interval $[a,b]$. Fix $\ep > 0$.
Let $c, d \in \bR$ such that $(a,c) \in P_\ep(f)$ and $(b,d) \in P_\ep(f)$.
\[
g_\ep (x) := (x - a)\frac{d-c}{b-a} + c.
\]
Then
\begin{equation}
|f(x) - g_\ep(x)| \le 7(||f'||_\infty +2) \ep +  3||f'||_\infty(b-a)+||f''||_\infty (b-a)^2
\label{eq: 271}
\end{equation}
and
\begin{equation}
|f'(x) - g_\ep'(x)| = |f'(x) - \frac{d-c}{b-a}| \le (b-a) ||f''||_\infty + \frac{2(||f'||_\infty+2) \ep}{(b-a)},
\label{eq: 272}
\end{equation}
for all $x \in[a,b]$.
\label{thm: onesecant}
\end{theorem}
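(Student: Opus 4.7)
The plan is to prove the derivative bound \eqref{eq: 272} first, then use a similar set of ingredients together with the standard linear-interpolation error to establish the pointwise bound \eqref{eq: 271}. The crucial preliminary observation is the following: since $(a,c)\in P_\ep(f)$, the point $(a,c)$ lies in some pixel of the grid that must also intersect $\Gamma(f)$, so there exists $a'$ with $|a'-a|<\ep$ and $|f(a')-c|<\ep$, whence
\[
|c-f(a)|\leq |c-f(a')|+\|f'\|_\infty|a'-a|\leq (\|f'\|_\infty+1)\ep\leq (\|f'\|_\infty+2)\ep,
\]
and analogously $|d-f(b)|\leq (\|f'\|_\infty+2)\ep$. (One could also read this estimate off Proposition \ref{thm: difbound} applied to the columns over $a$ and $b$, together with the remark following it to absorb the possibility $a,b\in\ep\bZ$.)

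For \eqref{eq: 272}, I apply the mean value theorem to find $\xi\in(a,b)$ with $f'(\xi)=\tfrac{f(b)-f(a)}{b-a}$. Then, for any $x\in[a,b]$, since $f\in C^2$,
\[
\Bigl|f'(x)-\tfrac{d-c}{b-a}\Bigr|\leq |f'(x)-f'(\xi)|+\Bigl|\tfrac{f(b)-f(a)}{b-a}-\tfrac{d-c}{b-a}\Bigr|\leq \|f''\|_\infty(b-a)+\tfrac{|c-f(a)|+|d-f(b)|}{b-a},
\]
and the preliminary observation yields the stated bound $\|f''\|_\infty(b-a)+\tfrac{2(\|f'\|_\infty+2)\ep}{b-a}$.

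For \eqref{eq: 271}, I introduce the ``honest'' secant line $L(x):=f(a)+(x-a)\tfrac{f(b)-f(a)}{b-a}$ and split by the triangle inequality, $|f(x)-g_\ep(x)|\leq |f(x)-L(x)|+|L(x)-g_\ep(x)|$. The first term is bounded by the standard Taylor remainder for linear interpolation between the endpoints, yielding something of the form $\|f''\|_\infty(b-a)^2$ (a crude form of the $\tfrac{(b-a)^2}{8}\|f''\|_\infty$ estimate suffices). For the second term, a direct computation gives
\[
L(x)-g_\ep(x)=(f(a)-c)+(x-a)\cdot\tfrac{(f(b)-f(a))-(d-c)}{b-a},
\]
which, after applying the preliminary observation and $|x-a|\leq b-a$, is bounded by a small constant multiple of $(\|f'\|_\infty+2)\ep$. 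The residual $3\|f'\|_\infty(b-a)$ and the slightly loose coefficient $7$ arise naturally from combining these bounds with the Lipschitz estimate $|f(x)-f(a)|\leq\|f'\|_\infty(b-a)$ at one or two junctions where a tighter estimate is not worth tracking.

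The main obstacle is not conceptual but bookkeeping: all the genuinely new content is packed into the pixel-to-graph estimate $|c-f(a)|\leq(\|f'\|_\infty+2)\ep$, and one must be a little careful when $a$ or $b$ happens to lie in $\ep\bZ$, where the Pixelated Intermediate Value Theorem does not directly apply and the column may split into several stacks; the remark following Proposition \ref{thm: difbound} takes care of this case at the price of doubling the column height, which is absorbed by the generous constants in the stated bounds.
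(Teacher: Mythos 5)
Your proposal is correct, and for the pointwise bound \eqref{eq: 271} it takes a genuinely different (and cleaner) route than the paper. The paper's proof compares $g_\ep$ with the \emph{tangent} line $h$ to $f$ at the Mean Value Theorem point $\xi$ where $f'(\xi)=m_f$; this forces it to estimate quantities like $|h(a)-h(\xi)|\leq \|f'\|_\infty(b-a)$ and is precisely where the $3\|f'\|_\infty(b-a)$ term and the coefficient $7$ come from. You instead compare $g_\ep$ with the \emph{secant} line $L$ through the true endpoints $(a,f(a))$ and $(b,f(b))$; since $L$ and $g_\ep$ are structurally identical lines, the difference $L(x)-g_\ep(x)=(f(a)-c)+(x-a)(m_f-m_g)$ is controlled directly by the pixel-to-graph estimate, giving $3(\|f'\|_\infty+2)\ep$, and $|f(x)-L(x)|$ is the standard linear interpolation error $O(\|f''\|_\infty(b-a)^2)$. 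The result is a strictly sharper bound (no $3\|f'\|_\infty(b-a)$ term is actually needed), which of course implies the stated one; your closing remark correctly identifies those extra terms as slack rather than necessity. Two smaller differences are also worth noting: you derive $|c-f(a)|\leq(\|f'\|_\infty+1)\ep$ directly from the definition of the pixelation (a point of $P_\ep(f)$ shares a pixel with a point of the graph), whereas the paper routes this through the stack-height bound of Proposition \ref{thm: difbound}; your direct argument is self-contained and sidesteps entirely the edge case $a,b\in\ep\bZ$ that you flag. Your proof of \eqref{eq: 272} coincides with the paper's.
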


\begin{proof}
Note that it suffices to prove the above inequalities in the special case when $x\in [a,b]\setminus\ve\bZ$. For ease of notation let 
\[
M := ||f'||_\infty,\;\;N := ||f''||_\infty,\;\; m_f: = \frac{f(b) - f(a)}{b-a}
\]
and note that $m_f$ is the slope of the secant line to $f$ over the interval $[a,b]$.  Define
\[
m_g := \frac{d-c}{b-a},
\]
and note that this is the slope of $g_\ep$. Note that $c$ and $d$ differ from $f(a)$ and $f(b)$ by at most the height of a stack.  

\begin{lemma}
With $m_f$ and $m_g$ defined as above,
\begin{equation}
|m_f - m_g| \le \frac{2 (||f'||_\infty + 2)\ep}{b-a}.
\label{eq: 281}
\end{equation}
\label{lem: secant slope}
\end{lemma}

\noindent {\bf Proof of Lemma \ref{lem: secant slope}.}  Theorem \ref{thm: difbound} implies that a height of a stack in $P_\ep(f)$ is at most $M + 2$ so 
\[
|f(a) - c|, |f(b)-d| \le (M + 2) \ep.
\]
This immediately implies
\[ 
|m_f-m_g| = |\frac{f(b) - f(a)}{b-a} - \frac{d-c}{b-a}|\le \frac{1}{b-a} (|f(b) - d| + |f(a) - a|) \le \frac{2(M + 2) \ep}{b-a}.\proofend
 \]

The Mean Value Theorem implies that $\exists \xi \in [a,b]$ such that $f'(\xi) = m_f$.  Let  $h(x)$  be the linearization of  $f$ at  $\xi$,i.e.,
\[
h(x):= (x - \xi)m + f(\xi).
\]
  Thus we have
\begin{multline}
|g_\ep (x) - h(x)| = |(x - a) m_g + c - (x - \xi) m_f - f(\xi)| \\ \le |(x-a)m_g - (x-\xi)m_f| + |d - f(\xi)|
\label{eq: bound}
\end{multline}
First we will bound the first term on the right hand side of (\ref{eq: bound}).
\[
|(x-a)m_g - (x-\xi)m_f| = |(x-a) m_g - (x-\xi)m_g + (x-\xi)m_g - (x-\xi)m_f| \]
\[\le |m_g|(|x-a|+|x-\xi|)+|m_g-m_f|(|x-\xi|)
\]
To proceed further we need to use the following facts.
\begin{itemize}

\item  $|x-a|, |x-\xi|\leq |b-a|$.

\item  $|m_g - m_f| \le \frac{2(M+2) \ep}{b-a}$.  

\item Since $|m_f| \le M$, we have
\[
|m_g| \le M + \frac{2(M+2)\ep}{b-a}.
\]
\end{itemize}
Therefore
\[|(x-a)m_g - (x-\xi)m_f| \le 2 (b-a)(M + \frac{2(M+2)\ep}{b-a})+(b-a)\frac{2(M+2)\ep}{b-a}\]
\[= 2 M(b-a) + 6(M + 2)\ep.
\]
Now we bound the other term from (\ref{eq: bound}), $|c-f(\xi)|=|c-h(\xi)|$.   We have
\[
|c-f(\xi)|\leq |c-f(a)|+|f(a)-h(a)|+ |h(a)-h(\xi)|.
\]
Note that $|f(a) - c| \le (M+2)\ep$, since stacks are at most $M + 2$ pixels tall. Taylor's Theorem  implies that  
\[
|f(a) - h(a)| \le \frac{N (b-a)^2}{2}.
\]
 Finally $|h(\xi) - h(a)| \le M (b-a)$, since $h$ has slope at most $M$,  and $|\xi - a| \le (b-a)$. We conclude  that 
\[
|f(\xi) - c| \le (M+2) \ep + \frac{N (b-a)^2}{2} + M(b-a).
\]
Combining all of these bounds in (\ref{eq: bound}) we deduce 
\[
|g_\ep(x) - h(x)| \le 3 M (b-a) + 7(M+ 2)\ep + \frac{N(b-a)^2}{2}.
\]
Finally, 
\[
|f(x) - h(x)| \le \frac{N (b-a)^2}{2},
\]
 so that
\[
|f(x) - g(x)| \le 7(M+2) \ep +  3 M(b-a)+N (b-a)^2.
 \]
which is the bound (\ref{eq: 271}). 

The bound (\ref{eq: 272})   is obtained as follows.
\[
|f'(x) - g_\ep'(x)| = |f'(x) - m_g|\leq |m_g - m_f| +|m_f-f'(x)|
\]
\[
\stackrel{(\ref{eq: 281})}{\le }\frac{2(M + 2) \ep}{b-a}+|f'(\xi)-f'(x)|
\leq  \frac{2(M + 2) \ep}{b-a}+N\ep.
\]
\end{proof}

This Theorem tells us that the slope of a secant line to $f$ can be accurately approximated up to its derivative (by restricting $f$ to various intervals we can approximate any secant line.)  We would like approximate $f$ through the use of multiple secant lines.  However to accurately approximate a small secant line, we will need $\ep$ to be very small.  On the other hand, to accurately approximate $f$ we will need to use many (and therefore small) secant lines.  Thus we will need to know the size of a secant line that can be accurately approximated for a given $\ep$.  Before we can describe how to go about this process, however, we will need additional terms.

\begin{definition} Fix $\ve>0$ and a bounded set $S$. 

\begin{enumerate}
\item An $\ve$-\emph{profile} of $S$ is   a  set  $\Pi_\ve$   of points in the plane  with the following properties.

\begin{enumerate} 

\item Each point in  $\Pi_\ve$ is the center of  an $\ve$-pixel that intersects $S$. 

\item   Every  column of $P_\ve(S)$ contains precisely one point of $\Pi_\ve$.
\end{enumerate}

\item A  \emph{top/bottom $\ve$-profile} is   a profile  consisting of the centers  of  the highest/lowest pixels in  each column of $P_\ve(S)$.

\item An $\ve$-sample of $S$ is a subset of an $\ve$-profile.

\end{enumerate}\qed
\label{def: profile}
\end{definition}

\begin{definition} Suppose $p_1,\dotsc, p_N$ is a finite sequence of points in $\bR^2$.  (The points need not be distinct). We denote by
\[
\langle p_1,p_2,\dotsc, p_n\rangle
\]
the $PL$ curve defined  as the union of the straight line segments $[p_1,p_2],\dotsc, [p_{n-1},p_n]$.\qed
\end{definition}

Observe that   each $\ve$-profile $\Pi_\ve$ of a set is equipped with a linear order $\preceq$. More precisely if $p_1,p_2=(x_2,y_2)$ are points in $\Pi_\ve$, then 
\[
p_1\preceq p_2\Longleftrightarrow x(p_1)\leq x(p_2),
\]
where $x:\bR^2\ra \bR$   denotes the projection $(x,y)\mapsto x$.  In particular, this shows that  any $\ve$-sample of $S$ carries a natural total order.

If  $\Xi$  is an $\ve$-sample of $S$,  then the $PL$-interpolation  determined by sample $\Xi$ is the continuous, piecewise   linear function  $L= L_\Xi$ obtained as follows.

\begin{itemize}

\item Arrange the points in $\Xi$ in increasing order, with respect to the above total order,
\[
V=\{ \xi_0\prec \xi_1\prec\xi_2\prec\cdots \prec \xi_n\},\;\;n+1=|\Xi|.
\]
\item The graph of $L_\Xi$ is the $PL$-curve $\langle \xi_0,\xi_1, \dotsc, \xi_n\rangle$. 

\end{itemize}

In applications, the sample sets $\Xi$ will be chosen to satisfy   certain regularity.    

\begin{definition} \begin{enumerate}

\item If $\sigma $ is a positive  integer and $\Pi_\ep$ is an $\ep$-profile, then an \emph{$\ve$-sample with spread $\sigma$} is a subset  
\[
\Xi=\{\xi_0\prec\cdots \prec\xi_n\}\subset \Pi_\ve(S)
\]
such that  the following hold.

\begin{itemize}

\item The points $\xi_0$ and $\xi_n$ are the left and rightmost points in the profile.  (That is for each $p \in \Pi_\ep$, $x(\xi_0) \le x(p) \le x(\xi_n)$.)
\item For any $p\in \Pi_\ve$, there exists $\xi\in \Xi$ such that $|x(p)-x(\xi)|<\ve\sigma$.
\item 
\[
\frac{1}{2}\sigma  \leq \frac{1}{\ve}|x(\xi_k)-x(\xi_{k-1})|\leq \sigma,\;\;\forall k=1,\dotsc, n.
\]
\end{itemize}
\item A \emph{spread} is an increasing function $\ep\mapsto \sigma(\ep)$ from the positive real numbers to the positive integers with the following properties:
\begin{enumerate}
\item $\lim_{\ep \searrow 0} \sigma(\ep) = \infty$
\item $\lim_{\ep \searrow 0} \ep \sigma(\ep) = 0$
\end{enumerate}
\end{enumerate}\qed
\end{definition}

The next theorem tells how to approximate $f$ using profiles and spreads while maintaining first order properties.


\begin{theorem}
Let $f:[a,b]\ra \bR$ be a $C^2$-function and $\sigma(\ve)$ be a spread function. Fix $\ve>0$ and let $\Xi$ be an $\ve$-sample of the graph of $f$ with spread $\sigma(\ve)$,
\[
\Xi=\{ \xi_0\prec\xi_1\prec\cdots \prec\xi_n\},\;\;\xi_k=(x_k,y_k),\;\;k=0,1,\dotsc, n.
\]
Denote  by $f_\Xi: [x_0, x_n]\ra \bR$ the $PL$-interpolation  determined by $\Xi$.   Then 
\[
\bigl|\, f(x) - f_\Xi(x)\,\bigr| \le 7\bigl(\, \|f'\|_\infty+2\,\bigr)\ep + 3 \| f'\|_\infty\ep\sigma(\ep)+\|f''\|_\infty(\ep\sigma(\ep))^2,\;\;\forall x \in [a,b]\cap[x_0,x_n],
\]
 and 
 \[
 |f'(x) - f_\Xi'(x)| \le \frac{4(\| f'\|_\infty+2)}{\sigma(\ep)} + \| f''\|_\infty\ep\sigma(\ep),  
 \]
for all $x\in [a,b]\cap [x_0,x_n]\setminus \{x_0,\dotsc, x_n\}$. In particular, if
\[
\lim_{\ep \searrow 0} \ep \sigma(\ep)  = 0\;\;\mbox{and}\;\;\lim_{\ep \searrow 0} \sigma(\ep) = \infty,
\]
then as $\ep \searrow 0$, $f_\Xi$ converges to $f$ in the Sobolev norm $W^{1,p}$  for all $p \in [1,\infty)$. 
\label{thm: profileApprox}
\end{theorem}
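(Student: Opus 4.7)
The plan is to reduce the claim to a per-interval application of Theorem \ref{thm: onesecant}. On each subinterval $[x_{k-1},x_k]$ the PL interpolation $f_\Xi$ is the straight line joining the two sample points $\xi_{k-1}=(x_{k-1},y_{k-1})$ and $\xi_k=(x_k,y_k)$. Since both points are centers of pixels in $P_\ep(f)$ we have $(x_{k-1},y_{k-1}),(x_k,y_k)\in P_\ep(f)$, so the hypotheses of Theorem \ref{thm: onesecant} are satisfied with $a\rightsquigarrow x_{k-1}$, $b\rightsquigarrow x_k$, $c\rightsquigarrow y_{k-1}$, $d\rightsquigarrow y_k$, and $g_\ep\rightsquigarrow f_\Xi\big|_{[x_{k-1},x_k]}$.

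The key quantitative input is the spread condition, which provides the width bounds
\[
\tfrac{1}{2}\ep\sigma(\ep)\ \le\ x_k-x_{k-1}\ \le\ \ep\sigma(\ep).
\]
Substituting the upper bound $(b-a)\le \ep\sigma(\ep)$ in the estimate (\ref{eq: 271}) immediately gives the first bound of the theorem. For the derivative, one substitutes $(b-a)\le \ep\sigma(\ep)$ in the first term of (\ref{eq: 272}), while the lower bound $(b-a)\ge \tfrac{1}{2}\ep\sigma(\ep)$ converts $\frac{2(\|f'\|_\infty+2)\ep}{b-a}$ into $\frac{4(\|f'\|_\infty+2)}{\sigma(\ep)}$. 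Since the resulting bounds depend only on $\ep$, $\sigma(\ep)$, $\|f'\|_\infty$ and $\|f''\|_\infty$, they are uniform in $k$ and therefore hold on all of $[a,b]\cap[x_0,x_n]$, with the derivative bound valid away from the finitely many knots $x_0,\dotsc,x_n$.

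For the $W^{1,p}$ conclusion, the two hypotheses $\ep\sigma(\ep)\to 0$ and $\sigma(\ep)\to\infty$ force every term on the right hand side of both bounds to vanish as $\ep\searrow 0$; hence $f_\Xi\to f$ uniformly on $[a,b]\cap[x_0,x_n]$ and $f_\Xi'\to f'$ uniformly almost everywhere, with a uniform $L^\infty$ envelope. Because $[a,b]$ is bounded, uniform convergence implies $L^p$-convergence for every $p\in[1,\infty)$, and combining the two statements yields $\|f-f_\Xi\|_{W^{1,p}}\to 0$. The minor endpoint discrepancy between $[a,b]$ and $[x_0,x_n]$ (a set of measure $O(\ep)$ on which $f$ and $f_\Xi$ remain uniformly bounded by Proposition \ref{thm: difbound}) contributes only a vanishing amount to the $L^p$ norm.

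The entire argument is bookkeeping: Theorem \ref{thm: onesecant} has already done the analytic work of comparing one secant line to $f$, and the spread-based width control is designed precisely so that substituting these bounds produces a uniform estimate. The only genuinely delicate point is balancing the two competing tendencies in the derivative bound, where $\ep\sigma(\ep)$ must shrink (to control the Taylor error) while $\sigma(\ep)$ must grow (to damp the $\ep/(b-a)$ term); this is exactly what the two defining conditions of a spread enforce, so the convergence is automatic once the pointwise bounds are in hand.
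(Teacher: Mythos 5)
Your proposal is correct and follows essentially the same route as the paper: apply Theorem \ref{thm: onesecant} on each subinterval $[x_{k-1},x_k]$, use the spread inequalities $\tfrac{1}{2}\ep\sigma(\ep)\le x_k-x_{k-1}\le\ep\sigma(\ep)$ to convert its bounds into the stated uniform estimates, and then observe that every term vanishes as $\ep\searrow 0$ to get $W^{1,p}$ convergence. The only difference is that you spell out the endpoint discrepancy between $[a,b]$ and $[x_0,x_n]$, which the paper leaves implicit.
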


\begin{proof} Let $x\in [x_0,x_n]$ and  $k\in \{0,1,\dotsc, (n-1)\}$ such that $x\in [x_k,x_{k+1}]$. On this subinterval, the function  $f_\Xi$ is defined by
\[
f_\Xi(x) := (x-x_k)\frac{y_{k+1}-y_k}{x_{k+1}-x_k} +y_k.
\]
Thus $f_\Xi$ is a function of the type described in Theorem \ref{thm: onesecant}, where 
\[
a = x_k, \;\;b = x_{k+1}, \;\;c = y_k, \;\; d = y_{k+1}.
\]
Noting that 
\[
\frac{1}{2}\ve\sigma(\ve)\leq b - a \leq  \ep \sigma(\ep), 
\]
 we obtain the desired error bounds.

Now suppose that $\lim_{\ep \searrow 0} \sigma(\ep) \ep = 0$ and $\lim_{\ep \searrow 0} \sigma(\ep) = \infty$.  Then note that every term in both the error bounds of $\bigl| f - f_\Xi\bigr|$ and $\bigl| f - f_\Xi'\bigr|$ is a constant multiplied by $\ep$, $\frac{1}{\sigma(\ep)}$, or  $\ep \sigma(\ep)$ all of which converge to $0$ as $\ep \searrow 0$.  This implies that 
\[
\lim_{\ep\ra 0}\int_a^b\left( |f(x) - f_\Xi(x)|^p+|f'(x) - f_\Xi'(x)|^p\right) dx= 0,\;\;\forall 1 \le p < \infty. 
\]
 Therefore $f_\Xi$ converges to $f$ in the Sobolev $W^{1,p}$ norm.

\end{proof}
This theorem allows us to approximate $C^2$ functions using only their pixelations.  Note that any profile can be chosen for the approximation.  We can see this in the pixelation by noting that the top profile and bottom profile both converge to the graph of the function, so any profile chosen in between will be squeezed onto the graph.  

To ensure the correct convergence on the derivative, we only need the two basic features of a spread: that as $\ep \searrow 0$, $\sigma(\ep) \to \infty$ and $\ep \sigma(\ep) \searrow 0$.  In general we would like the spread to increase quickly, so as to get an accurate approximation on the derivative.  The two conditions on the spread mean that the quickest increasing spread will look like $\lceil \ep^{\frac{-1}{1+r}} \rceil$ where $r > 0$.

\newpage
\section{Approximations of Elementary Regions and Curvature Approximation}
\label{s: 3}
\setcounter{equation}{0}
In this section we use the properties of pixelations of functions to study the pixelations of simple two dimensional sets.  We deal with the simplest case: that of a region between the graphs of two $C^2$ functions.

\begin{definition} A subset  $S\subset \bR^2$ is  said to be  \emph{elementary} (with respect to the $x$-axis) if its can be   defined as
\[
S=S(\beta,\tau):= \bigl\{\, (x,y): x \in [a,b], \beta(x) \le x \le \tau(x)\,\bigr\} ,
\]
where $\beta,\tau:[a,b]\ra \bR$ are $C^2$ functions  such that $\beta(x) \le \tau(x)$, $\forall x\in[a,b]$.    The function $\beta$ is called the \emph{bottom} of $S$ while $\tau$ is called the \emph{top} of $S$. Note that this includes the situation where $\beta \equiv \tau$, in which case $P_\ep(S) = P_\ep(\beta) = P_\ep(\tau)$.\qed
\label{def: s-type}
\end{definition}

In the remainder of this section $S$ will indicate an elementary set.  We first note that like the pixelation of a function, each column of $P_\ep(S)$ contains only one stack:

\begin{proposition}
If $S=S(\beta,\tau)$ is an elementary set,  then for every  $x \in [a,b]\setminus \ep\bZ$, the column $C_\ep(S,x)$ consists of exactly one stack.
\label{pro: ivt2}
\end{proposition}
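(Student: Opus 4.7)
The plan is to reduce the statement to the claim that a certain index set is an interval of consecutive integers, and then establish that claim using path-connectedness of $S\cap \eS_\ep(x)$. Setting $n := \lfloor x/\ep\rfloor$, the strip $\eS_\ep(x)$ contains precisely the pixels $S_{n+1,j}(\ep)$ with $j\in\bZ$, and two such pixels share a horizontal edge exactly when their $j$-indices are consecutive. Consequently, the column $C_\ep(S,x)$ will consist of a single stack if and only if
\[
J := \bigl\{\, j\in\bZ : S_{n+1,j}(\ep)\cap S \neq \emptyset\,\bigr\}
\]
is an interval of consecutive integers.

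The main step is to observe that $S\cap \eS_\ep(x)$ is path-connected. Setting $I := I_\ep(x) = [n\ep,(n+1)\ep]$, this intersection is equal to $\{\,(x',y) : x'\in I,\ \beta(x')\le y\le \tau(x')\,\}$, which is the continuous image of the connected rectangle $I\times[0,1]$ under the map $(x',t)\mapsto \bigl(x',\,(1-t)\beta(x')+t\tau(x')\bigr)$; only the continuity of $\beta$ and $\tau$ is used. Its projection $K$ onto the vertical axis is therefore a connected subset of $\bR$, hence an interval.

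The final step is routine book-keeping: $j\in J$ precisely when the slab $[(j-1)\ep, j\ep]$ meets $K$, so if $j_1<j_2$ both lie in $J$ I pick $y_i\in K\cap [(j_i-1)\ep, j_i\ep]$ and note that for any $j_1<j<j_2$ the inclusions $[(j-1)\ep, j\ep]\subset [y_1,y_2]\subset K$ force $j\in J$. There is no genuine obstacle in this argument; it is a painless generalization of the pixelated intermediate value theorem (Theorem \ref{thm: ivt}), with the vertical fibre $[\beta(x'),\tau(x')]$ of $S\to I$ playing the role previously occupied by the single value $\{f(x')\}$.
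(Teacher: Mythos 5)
Your proof is correct, but it is organized differently from the paper's. The paper treats the proposition as a corollary of Theorem \ref{thm: ivt}: it applies that theorem to the top and bottom functions separately to conclude that $C_\ep(\beta,x)$ and $C_\ep(\tau,x)$ are each single stacks, and then disposes of the remaining case by observing that every pixel of the strip lying above the stack of $\beta$ and below the stack of $\tau$ must meet $S$, so the gap between the two stacks is filled in. You instead bypass Theorem \ref{thm: ivt} entirely and rerun the underlying connectedness argument one dimension up: $S\cap\eS_\ep(x)$ is the continuous image of a connected set, its projection to the $y$-axis is an interval, and an interval can only meet a consecutive run of the slabs $[(j-1)\ep,j\ep]$. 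Both arguments are short and use only continuity of $\beta$ and $\tau$; the paper's buys economy by reusing an already-proved result plus an easy filling observation, while yours is self-contained, makes the ``why'' transparent (connectedness of the fiberwise-convex region over the strip), and would generalize verbatim to any set whose intersection with each vertical strip is connected, not just to regions between two graphs. The only nitpick is that $\eS_\ep(x)\cap S$ should really be parametrized over $I_\ep(x)\cap[a,b]$ rather than $I_\ep(x)$ when $x$ lies in the first or last column over $[a,b]$; this changes nothing since that intersection is still a nonempty interval.
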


\begin{proof}  Fix an $\ep$-generic $x\in [a,b]$. By Theorem \ref{thm: ivt} the columns  $C_\ve(\beta,x)$  and    $C_\ep(\tau,x)$    consist of single stacks. If these two columns intersect, then  the conclusion is obvious. If they do not intersect, then  any  pixel in the strip $S_\ve(x)$ situated below the stack $C_\ve(\tau,x)$ and above    the stack $C_\ve(\beta,x)$ is a pixel of $P_\ve(S)$.  This again proves that the  column $C_\ve(S,x)$ consists of a single stack.\end{proof}

\begin{definition}Fix $\ep>0$ and an elementary  set $S=S(\beta,\tau)$.

\begin{enumerate}

\item  An $\ep$-\emph{upper/lower profile} of $S$ is a profile of the $\ep$-pixelation  of the top/bottom function. An $\ep$-upper/lower \emph{sample} is a  sample of  an upper/lower profile.

\item An $\ep$-upper profile $\Pi^+_\ep$ is said to be \emph{compatible} with an $\ep$-lower profile $\Pi^-_\ep$ if $p^+\in \Pi^+_\ep$ and $p^-\in\Pi_\ep^-$ lie in the same column of $P_\ep(S)$ we have $p^+\in \Pi^+_\ep$ and $p^-\in\Pi_\ep^-$ lie in the same column of $P_\ep(S)$ we have
\[
y(p^-)\leq y(p^+).
\]

\item  An $\ep$-upper  sample $\Xi^+$  is said to be compatible   with an $\ep$-lower  sample $\Xi^-$, if   the following hold.

\begin{itemize}

\item  There exist compatible $\ep$-upper/lower profiles  $\Pi^\pm$ such that $\Xi^\pm\subset\Pi^\pm$.

\item  A strip $\eS_{\ep,k}$  contains a point in $\Xi^+$ if and only if it also contains a point in $\Xi^-$.

\end{itemize}

\item Suppose that $\Xi^\pm_\ep$ are compatible upper/lower samples of $S$
\[
\Xi^\pm_\ve=\{\xi_0^\pm\prec\xi_1^\pm\prec\cdots \prec\xi_n^\pm\}.
\]
The  $PL$-\emph{approximation}  of $S$  determined by these two samples is the $PL$-set   bounded by the simple closed $PL$-curve
\[
\langle \xi_0^-, \xi_1^-,\dotsc, \xi_n^-,\xi_n^+,\xi_{n-1}^+,\dotsc,\xi_0^+, \xi_0^-\rangle.
\]
We denote this approximation by $|S(\Xi_\ve^-,\Xi_\ve^+)|$. \qed
\end{enumerate}
\end{definition}

\begin{figure}[h]
\centering{\includegraphics[height=3in,width=6in]{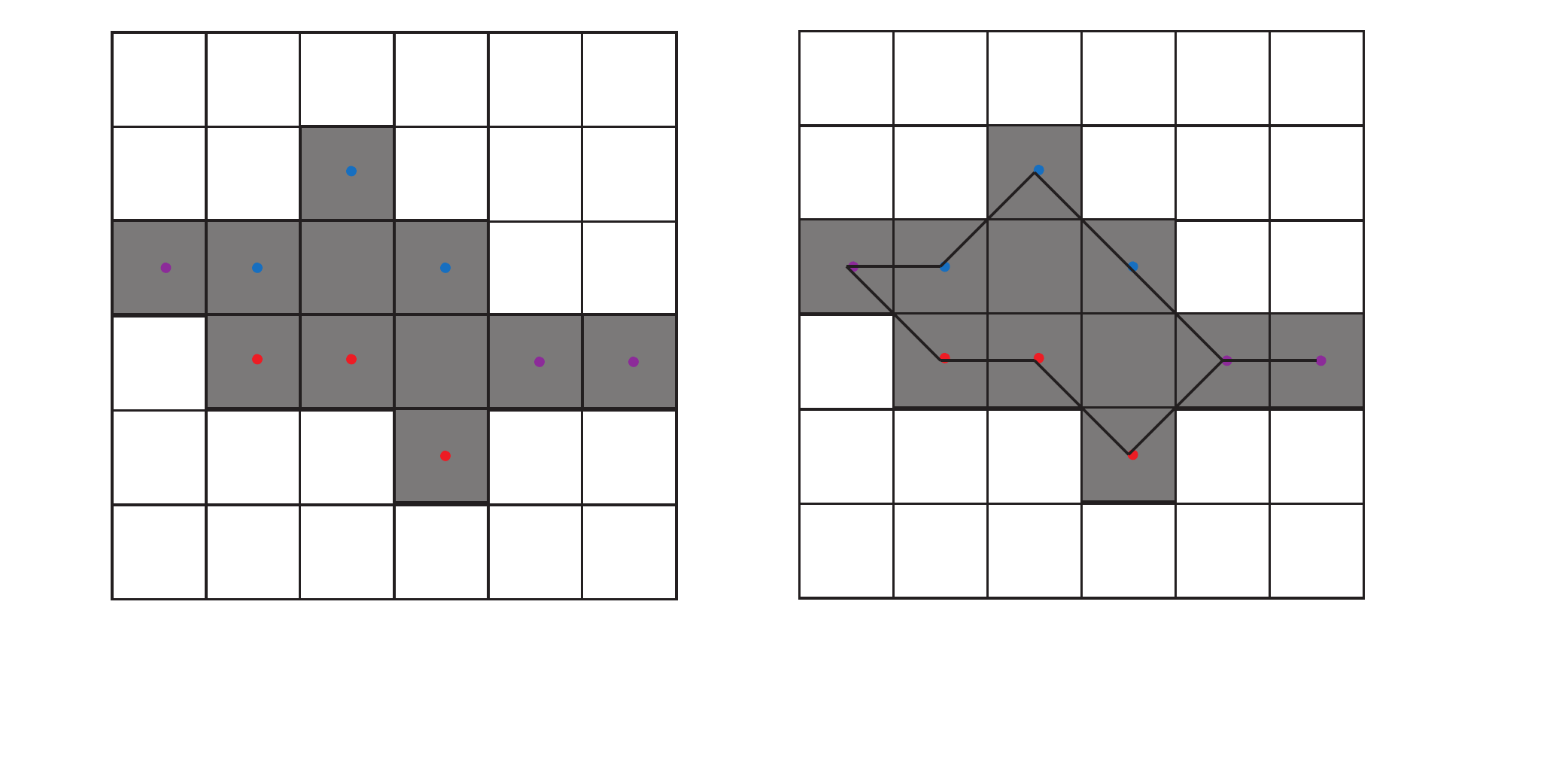}}
\caption{\sl An example of PL approximation.  The left figure shows a pixelation of an elementary subset, together with a compatible upper and lower profile.  The upper profile is indicated in blue while the lower profile is in red.  The right figure shows the PL approximation of compatible samples of these approximations with spread 2.  The PL approximation for the combined upper and lower samples would be the region in between the upper and lower approximations.}
\label{fig: topbottom*}
\end{figure}

These definitions are extensions of the ideas of profiles, samples and PL approximations found in section 2.  Note that the upper and lower profiles are defined from the pixelations of the top and bottom function, which might not be known.  However, from the definition of an elementary set, every pixel on the top of a column must intersect the pixelation of the upper function and similarly every pixel at the bottom of a column must intersect the lower function.  Therefore, an upper profile can be thought of as a profile chosen from the top pixels of $P_\ep(S)$ and a lower profile can be thought of as a profile chosen from the lower pixels of $P_\ep(S)$.

In section 2 we noted that the PL approximation preserves first order properties for a good choice of spread.  For elementary sets, we would like to prove something stronger.  Namely, we would like to show that the total curvature of the set is preserved for good choice of spread.

The total curvature of a $C^2$ immersion $\gamma:[a,b]\ra \bR^2$ is defined as follows.    Define a $C^1$-function $\theta:[a,b]\ra \bR$ such that 
\[
\frac{1}{|\dot{\gamma}(t)|}\dot{\gamma}(t)= (\cos(\theta(t),\sin(\theta(t))),
\]
where a dot $\dot{}$   denotes the $t$-derivative.    We set 
\[
\kappa(t):=|\dot{\theta}(t)|.
\]
The scalar $\kappa(t)$ is called the \emph{curvature} of $\gamma$ at the point $\gamma(t)$.  We define the total curvature  of $\gamma$ to be
\[
K(\gamma):=\int_a^b \kappa(t)\,dt.
\]
Suppose now that  $\gamma:[a,b]\ra \bR^2$  continuous  and  piecewise $C^2$-immersion, i.e., there exist a finite subset $\{t_0,\dotsc, t_\nu\}\subset [a,b]$, $t_{i-1}<t_i$, for
\[
a=t_0<t_1<\cdots <t_{\nu-1}<t_\nu=b
\]
such that the restriction $\gamma_i:=\gamma|_{[[t_{i-1},t_i]}$ is a $C^2$-immersion for any $i=1,\dotsc,\nu$.  The curvature of  $\gamma$ at a jump point $\gamma(t_i)$ is the quantity
\[
\kappa(t_i)= |\theta(t_i^+)-\theta(t_i^-)|:=|\lim_{t\searrow t_i} \theta(t)-\lim_{t\nearrow t_i} \theta(t)|.
\]
We  define the total curvature of  $\gamma$ to be
\[
\begin{split}
K(\gamma)&:=\sum_{i=1}^\nu K(\gamma_i)+\sum_{i=1}^{\nu-1}\kappa(t_i)\\
&+\begin{cases}0, & \gamma(b)\neq\gamma(a)\\
|\theta(t_0^+)-\theta(t_\nu^-)|, &\gamma(b)=\gamma(a).
\end{cases}
\end{split}
\]
For more details we refer to \cite{Mil} and \cite[\S 2.2]{Mor}.

\begin{proposition} Let $h: [a,b]\ra \bR$ be a $C^2$ function. Fix a spread $\si$  satisfying the properties 
\begin{subequations}
\begin{equation}
\lim_{\ep \searrow 0} \ep \sigma(\ep)^2 = \infty,
\label{eq: spi}
\end{equation}
\begin{equation}
\lim_{\ep \searrow 0} \ep \sigma(\ep) = 0.
\label{eq: sp0}
\end{equation}   
\end{subequations}
For every $\ep > 0$ let $\Xi_\ep$ be a $\ep$-sample spread $\sigma$. We denote by $h_\ep$ be the PL interpolation of $\Xi_\ep$. Then
\[
\lim_{\ep \searrow 0}K(\Gamma_{h_\ep})=K(\Gamma_h),
\]
where for any function $f$ we denote by $\Gamma_f$ its graph.
\label{prop: c2curv}
\end{proposition}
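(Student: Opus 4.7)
The plan is to express both sides as sums or integrals involving the tangent angle $\theta(x) := \arctan(h'(x))$, reduce the PL side to a Riemann-type sum based on the secant slopes of $h$ itself, and then invoke the fact that for the $C^1$ function $\theta$, the total variation of $\theta$ along a partition of $[a,b]$ converges to $\int_a^b |\theta'(x)|\,dx = K(\Gamma_h)$ as the mesh of the partition tends to $0$.

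More precisely, write the sample points as $\xi_k=(x_k,y_k)$, let $m_k$ be the slope of the PL segment $[\xi_{k-1},\xi_k]$, let $\tilde m_k := (h(x_k)-h(x_{k-1}))/(x_k-x_{k-1})$ be the true secant slope of $h$ over the same subinterval, and set $\theta_k := \arctan(m_k)$, $\tilde\theta_k := \arctan(\tilde m_k)$. Then
\[
K(\Gamma_{h_\ep}) = \sum_{k=1}^{n-1}|\theta_{k+1}-\theta_k|.
\]
By the Mean Value Theorem there exists $\eta_k\in(x_{k-1},x_k)$ with $\tilde m_k = h'(\eta_k)$, so $\tilde\theta_k=\theta(\eta_k)$, and $\eta_1<\eta_2<\cdots<\eta_{n-1}$ is a partition of a subinterval of $[a,b]$ with mesh at most $x_{k+1}-x_{k-1}\le 2\ep\sigma(\ep)$, which tends to $0$ by (\ref{eq: sp0}). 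Consequently, since $\theta\in C^1$ has total variation $\int_a^b|\theta'|\,dx = K(\Gamma_h)$, a standard argument gives
\[
\sum_{k=1}^{n-1}|\tilde\theta_{k+1}-\tilde\theta_k|\;=\;\sum_{k=1}^{n-1}|\theta(\eta_{k+1})-\theta(\eta_k)|\;\longrightarrow\;K(\Gamma_h),
\]
(the endpoints $\eta_1$ and $\eta_{n-1}$ are within $2\ep\sigma(\ep)$ of $a$ and $b$, so the missing endpoint contribution to the integral vanishes).

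It remains to show that the PL sum differs from this secant sum by an amount that tends to $0$. Because $\arctan$ is $1$-Lipschitz,
\[
\bigl|\,|\theta_{k+1}-\theta_k|-|\tilde\theta_{k+1}-\tilde\theta_k|\,\bigr|\leq |\theta_k-\tilde\theta_k|+|\theta_{k+1}-\tilde\theta_{k+1}|\leq |m_k-\tilde m_k|+|m_{k+1}-\tilde m_{k+1}|.
\]
Lemma \ref{lem: secant slope} applied on each interval $[x_{k-1},x_k]$, whose length is at least $\tfrac{1}{2}\ep\sigma(\ep)$, gives $|m_k-\tilde m_k|\le 4(\|h'\|_\infty+2)/\sigma(\ep)$. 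Summing over the $n\le 2(b-a)/(\ep\sigma(\ep))$ vertices yields
\[
\Bigl|\,K(\Gamma_{h_\ep})-\sum_{k=1}^{n-1}|\tilde\theta_{k+1}-\tilde\theta_k|\,\Bigr|\;\le\;\frac{16(\|h'\|_\infty+2)(b-a)}{\ep\,\sigma(\ep)^2},
\]
which tends to $0$ precisely by assumption (\ref{eq: spi}). Combining this with the convergence of the secant sum establishes $K(\Gamma_{h_\ep})\to K(\Gamma_h)$.

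The main obstacle is this accumulation argument: the natural per-vertex angular error is $O(1/\sigma(\ep))$, but there are $O(1/(\ep\sigma(\ep)))$ vertices, so one needs $1/(\ep\sigma(\ep)^2)\to 0$ to control the total; this is exactly why the stronger spread condition (\ref{eq: spi}) is imposed, in addition to the first-order condition (\ref{eq: sp0}) which is needed only to make the partition mesh go to zero so that the secant sum converges to the total variation of $\theta$.
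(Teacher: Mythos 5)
Your proof is correct and follows essentially the same route as the paper's: compare the PL interpolant of the sample to the secant polygon through the true graph points $(x_k,h(x_k))$ via Lemma \ref{lem: secant slope} (accumulating $O(1/\sigma)$ angular errors over $O(1/(\ep\sigma))$ vertices, controlled by (\ref{eq: spi})), then identify the secant polygon's curvature as a total-variation sum for $\theta=\arctan(h')$ via the Mean Value Theorem. Your use of the $1$-Lipschitz property of $\arctan$ is a small clean-up over the paper's tangent-difference-formula computation, but the argument is otherwise the same.
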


\begin{proof}  Let $X_\ep$ be the ordered set of $x$-values of the sample $\Xi_\ep$,
\[
X_\ep = \{x_0< x_1< \dots <x_n\}\subset [a,b],
\]
and set
\[
p_i:= (x_i, h(x_i))\in \Gamma_h.
\]
Define $s_\ep$ to be the PL function with graph $\langle p_0,p_1,\dotsc, p_n\rangle$.   We will prove two things.
\begin{subequations}
\begin{equation}
\lim_{\ep\ra 0} \left(\, K(h_\ep)-K(s_\ep)\,\right)=0,
\label{eq: h-s}
\end{equation}
\begin{equation}
\lim_{\ep\ra 0} K(s_\ep)=K(h).
\label{eq: sh}
\end{equation}
\end{subequations}

\medskip

\noindent  {\bf Proof of (\ref{eq:  h-s})}. For $1 \le i \le n$ let $m_i^h$ indicate the slope of the $i$-th line segment of $h_\ep$ and $m_i^s$ indicate the slope of the $i$-th line segment of $s_\ep$, i.e.,  the slope over  the interval $[x_{i-1},x_i]$.

Since $h_\ep$ has spread $\sigma$, the width of the interval $[x_{i-1},x_i]$ is at most $\ep \sigma$.  Then since $m_i^h$ is the slope of a line connecting two points in the sample $H_\ep$, and $m_i^s$ is the slope of the secant line over the same interval, Lemma \ref{lem: secant slope} implies that
\begin{equation}
|m_i^h - m_i^s|  \le 2\frac{||h'||_\infty+2}{\sigma(\ep)}
\label{eq: slopeBound}
\end{equation}
In particular, $|m_i^h - m_i^s| \to 0$ as $\ep \to 0$ since $\sigma \to \infty$.

Define $\theta_i^h\in (-\frac{\pi}{2},\frac{\pi}{2})$ as the angle that the $(i+1)$-st line component of $s_\ep$ makes with the $x$-axis and likewise $\theta_i^s\in (-\frac{\pi}{2},\frac{\pi}{2})$ as the angle that the $(i+1)$-st line component of $s$ makes with the $x$-axis.  More precisely,  
\[
\tan(\theta_i^h) = m_i^h,\;\; \tan(\theta_i^s) = m_i^s. 
\]
 Therefore, the difference formula for tangent implies:
\[
\frac{m_i^h - m_i^s}{1 + m_i^h m_i^s} = \tan(\theta_i^h - \theta_i^s)
\]
so that
\begin{equation}
\theta_i^h - \theta_i^s = \arctan \left( \frac{m_i^h - m_i^s}{1 + m_i^h m_i^s} \right).
\label{eq: angle bound}
\end{equation}
Now, the estimate  (\ref{eq: slopeBound})  implies  that for small $\ep$, we have
\[
\left|\frac{m_i^h - m_i^s}{1 + m_i^h m_i^s}\right| \le 4\frac{\left(\|h'\|_\infty+2\right)}{\sigma(\ep)}.
\]
Indeed, the denominator is at least $\frac{1}{2}$ because, for $\ep$ sufficiently small, we have
\[
1+m_i^hm_i^s=1+m_i^h\left (\, m_i^h+O\left(\sigma(\ep)^{-1}\right)\,\right)
\]
\[
= 1+(m_i^h)^2+m_i^h O\left(\sigma(\ep)^{-1}\right)=1+(m_i^h)^2+O\left(\|h'\|_\infty\sigma(\ep)^{-1}\right).
\]
Then, using the Taylor expansion of $\arctan$ around $0$, we have
\begin{equation}
\delta_i:=\theta_i^h - \theta_i^s = O\left( \frac{1}{\sigma(\ep)}\right).
\label{eq: O angle bound}
\end{equation}
Recall that 
\[
K(s_\ep)= \sum_{i=1}^n \left|\,\theta_i^s - \theta_{i-1}^s\,\right|,\;\;K(h_\ep)= \sum_{i=1}^n \left|\,\theta_i^h - \theta_{i-1}^h\,\right|.
\]
and we deduce  that
\[
 \left| K(s_\ep)-K(h_\ep)\right|\leq \sum_{i=1}^n (|\delta_i| + |\delta_{i-1}|).
\]
Note that since $\Xi_\ep$ has spread $\sigma$,
\[
n = O\left( \frac{1}{\ep \sigma} \right)
\]
and (\ref{eq: O angle bound}) implies 
\[
\sum_{i=1}^n (|\delta_i| + |\delta_{i-1}|)=   O\left( \frac{1}{\ep \sigma(\ep)^2} \right).
\]
Since we have assumed that $\lim_{\ep \searrow} \ep \sigma(\ep)^2 = \infty$, the last estimate implies (\ref{eq: h-s}).

\medskip

\noindent \textbf{Proof of (\ref{eq: sh}).} Define the function $\theta$ by the equation:
\[
\frac{1}{|1 + (h'(x))^2|}\left(1,h'(x)\right) = (\cos(\theta(x)),\sin(\theta(x)))
\]
The function $\theta(x)$ gives the angle between the $x$-axis and the tangent line of $h$ at the point $x$.
As before let $m_i^s$ indicate the slope of the $i$-th line segment of $s_\ep$ and $\theta_i^s$ be the angle that the $i$-th line segment of $s_\ep$ makes with the $x$-axis.
The Mean Value Theorem implies that for each $i$ there exists a $\xi_i$ on $[x_{i-1},x_i]$ such that
\[
m_i^s = h'(\xi_i)
\]
It then follows that
\[
\theta_i^s = \theta(\xi_i)
\]
since $\tan(m_i^s) = \theta_i^s$ and $\tan(h'(\xi_i)) = \theta(\xi_i)$.  Furthermore, for each $i > 1$, the Mean Value Theorem guarantees the existence of a $\zeta_i$ on $[\xi_{i-1}, \xi_i]$ such that
\[
\theta'(\zeta_i) = \frac{\theta(\xi_i) - \theta(\xi_{i-1})}{\xi_i - \xi_{i-1}}
\]

Recall that the total curvature of $s_\ep$ is calculated by the summation:
\[
K(s_\ep) = \sum_{i=2}^n |\theta_i^s - \theta_{i-1}^s|
\]
Using the definitions of $\xi_i$ and $\zeta_i$ we find that
\begin{multline}
K(s_\ep) = \sum_{i=2}^n |\theta(\xi_i) - \theta(\xi_{i-1})| = \sum_{i=2}^n \left|\frac{\theta(\xi_i) - \theta(\xi_{i-1})}{\xi_i - \xi_{i-1}} \right| (\xi_i - \xi_{i-1}) = \sum_{i = 2}^n |\theta'(\zeta_i)| (\xi_i - \xi_{i-1})
\label{eq: replacement}
\end{multline}
Note that last sum in (\ref{eq: replacement}) is almost a Riemann sum for $|\theta'|$.  It is not a Riemann sum because the values $\{\xi_1, \xi_2, \dots, \xi_n\}$ do not form a partition of $[a,b]$ (since $\xi_1 \geq a$ and $\xi_n \le b$.)  However, it is true that:
\begin{equation}
\lim_{\ep \searrow 0}\left(|\theta'(a)| (\xi_1 - a) +  \sum_{i = 2}^n |\theta'(\zeta_i)| (\xi_i - \xi_{i-1}) + |\theta'(b)| (b - \xi_n)\right) = \int_a^b |\theta'(x)| dx
\label{eq: riemann}
\end{equation}
Both $(b - \xi_n)$ and $(\xi_1 - a)$ are intervals of size at most $\ep \sigma(\ep)$ (since our sample was chosen with spread $\sigma$.)  Therefore both of these terms vanish as $\ep \to 0$ (by our constraint on $\sigma$.)  Therefore equation (\ref{eq: riemann}) simplifies to
\begin{equation}
\lim_{\ep \searrow 0}\left(\sum_{i = 2}^n |\theta'(\zeta_i)| (\xi_i - \xi_{i-1})\right)  = \int_a^b |\theta'(x)| dx
\label{eq: riemann2}
\end{equation}
Combining (\ref{eq: riemann2}) with (\ref{eq: replacement}) shows that
\[
\lim_{\ep \searrow 0} K(s_\ep) = \int_a^b |\theta'(x)| dx = K(h)
\]
which proves (\ref{eq: sh}).


Given both (\ref{eq:  h-s}) and (\ref{eq: sh}) we simply note that as $\ep \to 0$ $K(h_\ep)$ approaches $K(s_\ep)$ which in turn approaches $K(h)$. \end{proof}

In applications, it will be necessary to deal with sets which resemble elementary sets, but whose boundaries are possibly only piecewise $C^2$.  Let us define precisely  the notion of piecewise $C^2$ function.

\begin{definition} A function  $f: [a,b]\ra \bR$ is said to be piecewise $C^2$ if  there exists a finite set
\[
S=\bigl\{a=s_0<s_1<\cdots < s_\ell=b\,\bigr\},
\]
such that the following hold.

\begin{enumerate}
\item The function $f$ is continuous.

\item For any $j=1,\dotsc, \ell$, and any $k=1,2$  the restriction of $f$ to the open interval $(s_{j-1},s_j)$ is a $C^2$ function and the limits
\[
\lim_{x\searrow s_{j-1} }f^{(k)}(x),\;\;\lim_{x\nearrow s_j} f^{(k)}(x)
\]
exist and are finite.
\end{enumerate}
We say that $S$ is the \emph{singular set} of $f$ and that the points  $C_j=(s_j, f(s_j))$, $j=0,\dotsc, \ell$ are the \emph{corners} of   the graph of $f$. The integer $\ell$ is called the \emph{length} of the singularity set. \qed
\end{definition}

 \begin{proposition} Suppose  that $f: [a,b]\ra \bR$ is a piecewise $C^2$ function with singular set
 \[
 S:=\bigl\{ a=s_0<s_1<\cdots <s_\ell=b\,\bigr\}.
 \]
 Fix a spread satisfying the properties (\ref{eq: spi}) and (\ref{eq: sp0}). For every $\ep > 0$ let $\Xi_\ep$ be an $\ep$-sample  with spread $\sigma$ satisfying (\ref{eq: spi}) and (\ref{eq: sp0}). For any $\ep>0$ we denote  by $f_\ep$ be the PL interpolation of $\Xi_\ep$. Then
\[
\lim_{\ep \searrow 0}K(\Gamma_{f_\ep})=K(\Gamma_f).
\]
\label{prop: piece-curv}
 \end{proposition}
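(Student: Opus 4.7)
The strategy is to reduce the piecewise $C^2$ case to Proposition \ref{prop: c2curv} applied on each of the $\ell$ smooth sub-intervals of $[a,b]\setminus S$, together with a direct analysis of the two ``corner vertices'' adjacent to the PL edge straddling each singular point. For $\ep$ so small that $\ep\sigma(\ep)$ is less than the minimum gap in $S$, and for each $j=1,\dotsc,\ell-1$, let $k(j)$ be the unique index with $x_{k(j)}\le s_j<x_{k(j)+1}$; call $[\xi_{k(j)},\xi_{k(j)+1}]$ the \emph{straddling edge} at $s_j$. For $j=1,\dotsc,\ell$, let $\{\xi_{a_j},\dotsc,\xi_{b_j}\}=\Xi_\ep\cap\bigl(\,(s_{j-1},s_j)\times\bR\,\bigr)$ and let $\tilde f_{\ep,j}$ be the PL interpolation of this restriction. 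Since $\tilde f_{\ep,j}$ coincides with $f_\ep$ on $[x_{a_j},x_{b_j}]$, the total curvature splits as
\[
K(\Gamma_{f_\ep})=\sum_{j=1}^\ell K(\Gamma_{\tilde f_{\ep,j}})+\sum_{j=1}^{\ell-1} T_{j,\ep},
\]
where $T_{j,\ep}$ is the sum of the two turning angles at $\xi_{k(j)}$ and $\xi_{k(j)+1}$.

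The restricted sample $\{\xi_{a_j},\dotsc,\xi_{b_j}\}$ satisfies the spread bounds on $[x_{a_j},x_{b_j}]$, and $x_{a_j}\to s_{j-1}$, $x_{b_j}\to s_j$ as $\ep\searrow 0$, so a straightforward localization of Proposition \ref{prop: c2curv} applied with $f|_{(s_{j-1},s_j)}$ yields $K(\Gamma_{\tilde f_{\ep,j}})\to K(\Gamma_f|_{(s_{j-1},s_j)})$. For the corner term $T_{j,\ep}$, denote the slopes of $[\xi_{k(j)-1},\xi_{k(j)}]$, the straddling edge, and $[\xi_{k(j)+1},\xi_{k(j)+2}]$ by $m^-_\ep$, $m^{\mathrm{mid}}_\ep$, $m^+_\ep$ respectively. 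Lemma \ref{lem: secant slope} (whose proof uses only the Lipschitz bound from Proposition \ref{thm: difbound}) combined with the mean value theorem and one-sided continuity of $f'$ at $s_j$ gives $m^\pm_\ep\to f'(s_j^\pm)$. For the straddling slope, splitting the integral defining the secant slope of $f$ over $[x_{k(j)},x_{k(j)+1}]$ at $s_j$ yields
\[
m^{\mathrm{mid}}_\ep=\frac{s_j-x_{k(j)}}{x_{k(j)+1}-x_{k(j)}}\,\mu^-_\ep+\frac{x_{k(j)+1}-s_j}{x_{k(j)+1}-x_{k(j)}}\,\mu^+_\ep+O\bigl(\sigma(\ep)^{-1}\bigr),
\]
where $\mu^\pm_\ep\to f'(s_j^\pm)$ are one-sided averages of $f'$.

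Setting $\theta^\bullet_\ep:=\arctan(m^\bullet_\ep)$ for $\bullet\in\{-,\mathrm{mid},+\}$, the monotonicity of $\arctan$ shows that $\theta^{\mathrm{mid}}_\ep$ lies between $\theta^-_\ep$ and $\theta^+_\ep$ up to an $o(1)$ error, so the two turnings telescope:
\[
T_{j,\ep}=\bigl|\theta^{\mathrm{mid}}_\ep-\theta^-_\ep\bigr|+\bigl|\theta^+_\ep-\theta^{\mathrm{mid}}_\ep\bigr|=\bigl|\theta^+_\ep-\theta^-_\ep\bigr|+o(1)\longrightarrow\kappa_j,
\]
with $\kappa_j:=|\theta(s_j^+)-\theta(s_j^-)|$. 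Summing the two convergent sums reproduces the piecewise $C^2$ total curvature $K(\Gamma_f)$. The delicate point is this telescoping identity: \emph{a priori} oscillations in the PL direction could push $\theta^{\mathrm{mid}}_\ep$ outside the interval with endpoints $\theta(s_j^\pm)$ and generate spurious extra turning; the convex combination representation of $m^{\mathrm{mid}}_\ep$ displayed above is what forbids this for $\ep$ sufficiently small.
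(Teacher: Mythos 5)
Your proof is correct and follows essentially the same route as the paper's: Proposition \ref{prop: c2curv} handles each smooth piece, and the corner contribution is recovered by showing that the direction of the edge straddling $s_j$ is, up to $o(1)$, intermediate between the two adjacent limiting directions, so the two turnings telescope to the jump angle $\kappa_j$. Your convex-combination representation of $m^{\mathrm{mid}}_\ep$ is precisely the content of the paper's Euclidean identity (\ref{eq: euclid}) (the exterior angle of the triangle $p_{i-1}p_*p_i$ equals the sum of the two remote interior angles); the only organizational difference is that the paper routes the comparison through two auxiliary interpolants $g_\ep$ and $h_\ep$ rather than decomposing $K(\Gamma_{f_\ep})$ directly.
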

 
 \begin{proof} We will follow the same strategy as in the proof of Proposition \ref{prop: c2curv}, with some   expected modifications due to the presence of singularities. For the clarity of exposition  we will assume  that the length $\ell=2$ so that
 \[
 S=\bigl\{\, a,b,s_1;\;\;s_1\in (a,b)\,\bigr\}.
 \]
 Set
 \[
 f^-:=f|_{[a,s_1]},\;\;f^+:=f|_{[s_1,b]}.
 \]
Let
\[
\Xi_\ep= \bigl\{ (x_j,y_j);\;\; j=0,\dotsc, n=n(\ep)\,\bigr\}. 
\]
We denote  by  $X_\ep$  the ordered set of $x$-values of the sample $\Xi_\ep$,
\[
X_\ep = \{x_0< x_1< \dots <x_n;\;\;n=n(\ep)\}\subset [a,b],
\]
and set
\[
p_i:= (x_i, f(x_i))\in \Gamma_h.
\]
Define $g_\ep$ to be the PL function with graph $\langle p_0,p_1,\dotsc, p_n\rangle$. 
We will prove two things.
\begin{subequations}
\begin{equation}
\lim_{\ep\ra 0} \left(\, K(f_\ep)-K(g_\ep)\,\right)=0,
\label{eq: fg}
\end{equation}
\begin{equation}
\lim_{\ep\ra 0} K(g_\ep)=K(f).
\label{eq: gf}
\end{equation}
\end{subequations}

\medskip

\noindent  {\bf Proof of (\ref{eq:  fg})}. For $1 \le i \le n$ let $m_i^f$ denote  the slope of the $i$-th line segment of $f_\ep$ and $m_i^g$ denote  the slope of the $i$-th line segment of $g_\ep$, i.e.,  the slopes of the segments over  the interval $[x_{i-1},x_i]$. Since $f_\ep$ has spread $\sigma$, the width of the interval $[x_{i-1},x_i]$ is at most $\ep \sigma$.  

Since the function $f$ is Lipschitz continuous we deduce  from Theorem \ref{thm: difbound} that
\[
|f(x_i)- y_j|\leq \bigl(\, \|f'\|_\infty+2\,\bigr)\ep.
\]
This  implies that
\[
|m_i^f - m_i^g|  \le 2\frac{||h'||_\infty+2}{\sigma(\ep)}
\]
In particular, $|m_i^f - m_i^g| \to 0$ as $\ep \to 0$ since $\sigma(\ep) \to \infty$ as $\ep\ra 0$. We can now conclude as in the proof of (\ref{eq: h-s}).

\medskip

\noindent  {\bf Proof of (\ref{eq:  gf})}. For every $\ep>0$ there exists a unique $i_\ep\in \{1,\dotsc, n(\ep)\}$ such that
\[
x_{i_\ep-1}<s_1\leq x_{i_\ep}.
\] 
We set $y_*= f(s_1)$, $p_*:=(s_1, y_*)$, and we   denote by $h_\ep$ the  $PL$-function with graph
\[
\langle p_0, \dotsc, p_{i_\ep-1}, p_*, p_{i_\ep}, \dotsc p_{n(\ep)}\rangle.
\]
 We set
 \[
 h^-_\ep:=h_\ep|_{[a,s_1]},\;\;h^+_\ep:=h_\ep|_{[s_1,b]},
 \]
 We denote by $\theta_\ep\in [0,\pi)$ the angle between the two nontrivial  line segments of the graph of $h$ that have $p_*$ as  common vertex. Then
 \[
 K(\Gamma_{h_\ep})=K(\Gamma_{h^-_\ep})+K(\Gamma_{h^+_\ep})+\theta_\ep.
 \]
 From Proposition \ref{prop: c2curv} we deduce that
 \[
 \lim_{\ep\ra 0} K(\Gamma_{h^\pm_\ep})=K(\Gamma_{f^\pm}),
 \]
 while  $\theta_\ep$ converges as $\ep\ra 0$ to the angle  between the  left and right tangents  to the graph of $f$ at $p_*$, so that
 \[
 \lim_{\ep\ra 0} K(\Gamma_{h_\ep})=K(\Gamma_f).
 \]
We thus have to prove that
\begin{equation}
\lim_{\ep\ra 0}\bigl(\, K(\Gamma_{g_\ep})-K(\Gamma_{h_\ep})\,\bigr)=0.
\label{eq: curv-diff}
\end{equation}
To analyze the difference  $K(\Gamma_{g_\ep})-K(\Gamma_{h_\ep})$ we distinguish two cases.

\medskip

\noindent {\bf A.} $s_1=x_{i_{\ep}}$. In this case  ${g_\ep}=h_\ep$ and thus $K(\Gamma_{g_\ep})-K(\Gamma_{h_\ep})=0$.

\medskip

\noindent {\bf B.} $s_1<x_{i}$, $i=i_\ep$.  We introduce the following notation from which we suppress the $\ep$-dependence.
\begin{itemize}

\item  $\alpha_-\in [0,\pi)$  denotes the angle  between the vectors $\overrightarrow{p_{i-2}p_{i-1}}$ $\overrightarrow{p_{i-1}p_{i}}$.

\item  $\alpha_+\in[0,\pi)$  denotes the angle  between the vectors  $\overrightarrow{p_{i-1}p_{i}}$  and $\overrightarrow{p_{i}p_{i+1}}$.

\item  $\beta_-\in [0,\pi)$   denotes the angle  between the vectors $\overrightarrow{p_{i-2}p_{i-1}}$ and  $\overrightarrow{p_{i-1}p_{*}}$.

\item  $\beta_+\in[0,\pi)$   denotes the angle  between the vectors  $\overrightarrow{p_{*}p_{i}}$  and $\overrightarrow{p_{i}p_{i+1}}$.

\item $\theta_\ep\in[0,\pi)$ denotes the angle between the vectors $\overrightarrow{p_{i-1}p_{*}}$ and  $\overrightarrow{p_{*}p_{i}}$.

\item $m_*^-$   denotes  slope  of the  line  $p_{i-1}p_*$.

\item $m_*^+$  denotes the slope of the line $p_*p_i$.

\item $m_*=m_i^g$  denotes the slope of the line $p_{i-1}p_i$. 

\end{itemize}
Then
\[
K(\Gamma_{g_\ep})-K(\Gamma_{h_\ep})=(\alpha_-+\alpha_+)-(\beta_-+\beta_+ + \theta_\ep),
\]
\[
\alpha_\pm = \left| \arctan m^g_{i\pm 1}-\arctan m_*\right|, \beta_\pm=\left|\arctan m_{i\pm1}^h-\arctan m_*^\pm \right|,
\]
Now let us observe that
\[
m^g_{i\pm 1}=m_*^\pm +o(1),
\]
where, following Landau's convention, we denote by  $o(1)$  a quantity that goes to zero as $\ep\ra 0$.  Thus
\[
\beta_\pm =o(1),
\]
 We deduce that
\begin{equation}
\begin{split}
K(\Gamma_{g_\ep})-K(\Gamma_{h_\ep})&=\alpha_++\alpha_--\theta_\ep+o(1)\\
&= \left| \arctan m_*^--\arctan m_*\right|+\left| \arctan m_*^+-\arctan m_*\right|-\theta_\ep+ o(1).
\end{split}
\label{eq: diff-curv1}
\end{equation}
We now  remark that\footnote{Compare with Lemma \cite[Lemma 1.1]{Mil}. A word of warning: while the main conclusion of that Lemma is true (total curvature   never decreases upon adjoining a point to to a $PL$-curve,   the claim  that it does not change for planar curves is not true.}
\begin{equation}
 \left| \arctan m_*^--\arctan m_*\right|+\left| \arctan m_*^+-\arctan m_*\right|=\theta_\ep.
 \label{eq: euclid}
 \end{equation}
 The equality (\ref{eq: euclid}) is a classical fact of Euclidean geometry. More precisely, in the  triangle $p_{i-1}p_*p_i$, the sum of the interior angles at the vertices  $p_{i-1}$ and $p_{i+1}$ is equal to the sum of exterior angle at $p_*$; see Figure \ref{fig: euclid}. Using (\ref{eq: euclid}) in (\ref{eq: diff-curv1}) we obtain (\ref{eq: curv-diff}). This completes the proof of (\ref{eq:  gf}), and thus the proof of the Proposition \ref{prop: piece-curv}
\end{proof}
\begin{figure}[h]
\centering{\includegraphics[height=3in,width=3in]{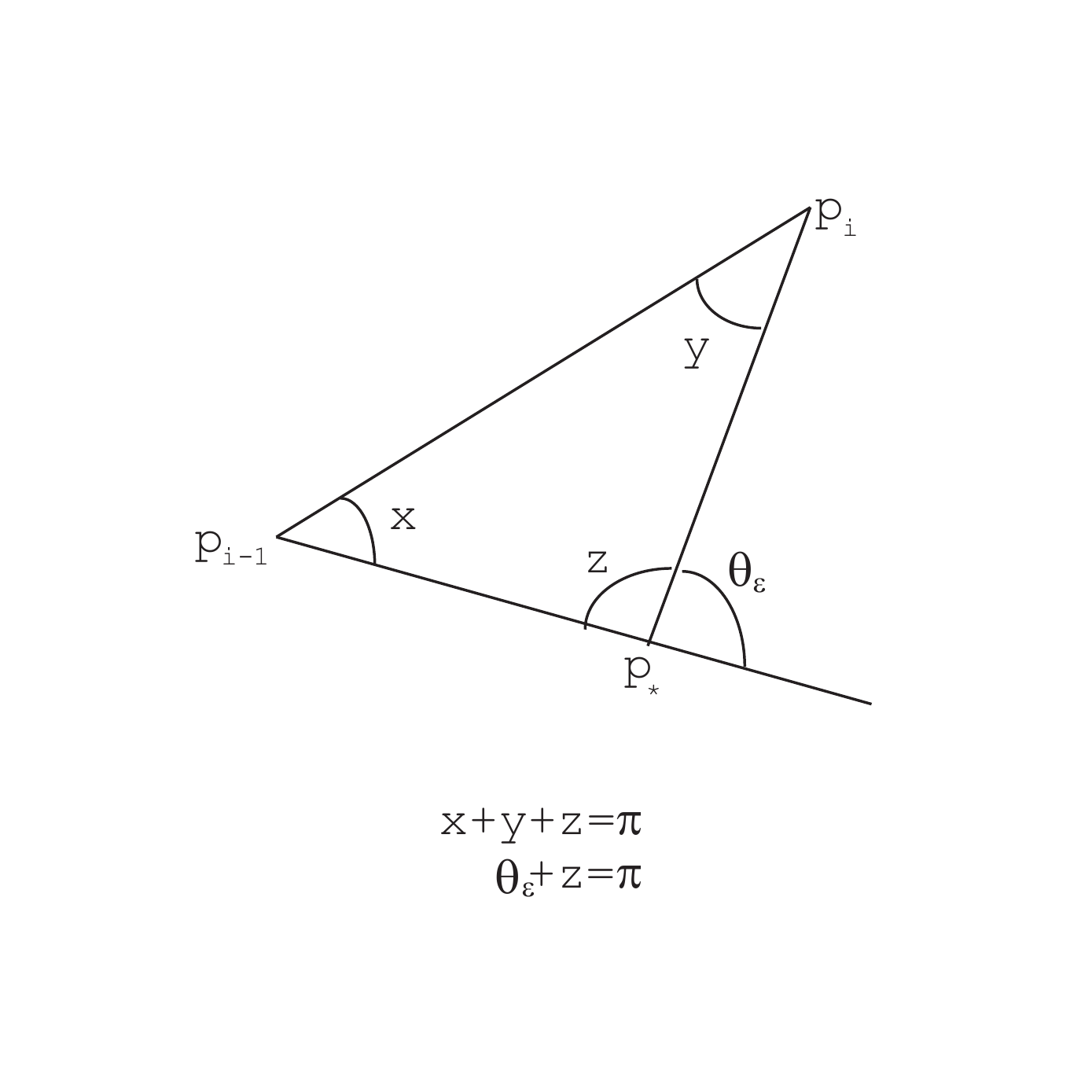}}
\caption{A classical fact of Euclidean geometry}
\label{fig: euclid}
\end{figure}

This will allow us to handle the case of piecewise linear sets.  Therefore the following corollary will be useful.

\begin{corollary}
Let $S(\beta,\tau)$ be a set which can be expressed as
\[
S = \bigl\{\, (x,y) : x \in [a,b], \beta(x) \le y \le \tau(x)\,\bigr\}
\]
where $\beta,\tau:[a,b]\ra \bR$ are piecewise $C^2$ functions such that
\[
\beta(x) <\tau (x)_,\;\;\forall x\in (a,b).
\]
Fix a spread $\si(\ep)$ satisfying  the conditions (\ref{eq: spi}) and (\ref{eq: sp0}). For each $\ep$ we choose  compatible $\ep$-upper/lower profiles  $\Xi^\pm_\ep$ of $S$ with common spread $\si(\ep)$. We denote by ${S}_\ep$ the $PL$ approximation of ${S}$ defined by these samples. Then
\[
\lim_{\ep \searrow 0} K(\partial S_\ep) = K(\partial S).
\]
\label{cor: piece curv}
\end{corollary}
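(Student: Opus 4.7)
The plan is to view $\partial S$ and $\partial S_\ep$ as piecewise $C^2$ closed curves, decompose each into manageable pieces, and apply Proposition \ref{prop: piece-curv} to the top and bottom halves separately, then account for the corner contributions where the pieces meet.

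Concretely, $\partial S$ decomposes as the union of the graph $\Gamma_\tau$, the graph $\Gamma_\beta$, and at most two vertical segments $V_a \subset \{x=a\}$ and $V_b \subset \{x=b\}$ joining the corresponding endpoints (a segment degenerates to a point precisely when $\beta$ and $\tau$ agree at that endpoint). Likewise $\partial S_\ep$ decomposes as $\Gamma_{\tau,\ep} \cup \Gamma_{\beta,\ep} \cup V_{a,\ep} \cup V_{b,\ep}$, where $\Gamma_{\tau,\ep}$, $\Gamma_{\beta,\ep}$ are the $PL$ interpolations of the upper/lower samples and $V_{a,\ep}$, $V_{b,\ep}$ are vertical segments joining their endpoints. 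For a piecewise $C^2$ closed curve the total curvature splits as the sum of the $K$'s of the smooth pieces plus the jump angles at the corners, so it suffices to show both that the curvatures of the smooth pieces converge and that the corner angles converge.

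Proposition \ref{prop: piece-curv} applied to $\tau$ and to $\beta$ (using the compatible samples $\Xi_\ep^\pm$ with spread $\sigma$ satisfying (\ref{eq: spi})--(\ref{eq: sp0})) already gives
\[
\lim_{\ep\searrow 0}K(\Gamma_{\tau,\ep}) = K(\Gamma_\tau),\qquad \lim_{\ep\searrow 0}K(\Gamma_{\beta,\ep}) = K(\Gamma_\beta).
\]
For the corner angles, note that the endpoints of the samples lie in the first and last columns of $P_\ep(S)$, hence are within vertical distance $\ep$ of the true endpoints of $\Gamma_\tau$ and $\Gamma_\beta$ (by Proposition \ref{thm: difbound} and compatibility). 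The slopes of the first and last segments of $\Gamma_{\tau,\ep}$ and $\Gamma_{\beta,\ep}$ differ from secant slopes over intervals of length at most $\ep\sigma(\ep)$ by $O(\sigma(\ep)^{-1})$ (this is exactly estimate (\ref{eq: slopeBound}) in the proof of Proposition \ref{prop: c2curv}), and these secant slopes converge to the appropriate one-sided derivatives of $\tau$ and $\beta$ at the endpoints. Hence at each endpoint of $V_{a,\ep}$ (and similarly $V_{b,\ep}$), the exterior angle between the vertical segment and the adjoining $PL$ piece converges to the exterior angle between the vertical and the one-sided tangent of $\tau$ or $\beta$ at $a$ (resp.\ $b$).

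The main obstacle is the degenerate case $\beta(a) = \tau(a)$ (and the symmetric case at $b$), where $V_a$ collapses to a point and $\partial S$ has a cusp or corner there whose curvature equals $\pi$ plus the signed angle between the one-sided tangents of $\tau$ and $\beta$. On the approximation side, $V_{a,\ep}$ need not vanish but has length $O(\ep)$, contributing no interior curvature; the total corner contribution of $\partial S_\ep$ near $x=a$ consists of the two exterior angles at the endpoints of $V_{a,\ep}$. Using the same slope-convergence argument and a computation analogous to (\ref{eq: euclid}) for the triangle formed by $V_{a,\ep}$ and the first segments of $\Gamma_{\tau,\ep}$, $\Gamma_{\beta,\ep}$, one checks that these two exterior angles sum in the limit to the correct cusp curvature at $(a,\beta(a))=(a,\tau(a))$. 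Combining everything yields $\lim_{\ep\searrow 0}K(\partial S_\ep)=K(\partial S)$.
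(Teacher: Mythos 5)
Your proposal is correct and follows essentially the same route as the paper: decompose $\partial S_\ep$ into the top and bottom $PL$ graphs plus the two vertical sides, apply Proposition \ref{prop: piece-curv} to get convergence of $K(\tau_\ep)$ and $K(\beta_\ep)$, and use the slope estimates from Theorem \ref{thm: onesecant} (equivalently Lemma \ref{lem: secant slope}) to show the exterior angles at the four corners converge to the corresponding angles of $\partial S$. Your explicit discussion of the degenerate case $\beta(a)=\tau(a)$, where the vertical side collapses and the two corner angles must be shown to sum to the cusp curvature, is a point the paper's proof passes over silently, and your accounting of it is correct.
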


\begin{proof}  Let
\[
\Xi^\pm_\ep=\bigl\{ \xi_0^\pm \prec\xi_1^\pm \prec\cdots \prec\xi_n^\pm\,\bigr\},\;\;\xi_k^\pm= (x_k^\pm, y_k^\pm).
\]
The compatibility condition implies that
\[
x_k^-=x_k^+=:x_k,\;\;y_k^-\leq y_k^+,\;\;\forall k=0,1,\dotsc, n.
\]
Let $\beta_\ep$ be the PL function whose graph is the bottom part of the boundary of $S_\ep$, and $\tau_\ep$ to be the PL function whose graph is the top part of the boundary of $S_\ep$.  Let $m_i^\beta(\ep)$ indicate the slope of the $i$-th line segment of the graph of  $\beta_\ep$ and similarly let $m_i^\tau(\ep)$ indicate the slope of the $i$-th line segment of the graph of $\tau_\ep$. We have
\[
\begin{split}
K(\partial S_\ep) & = |\pi - \arctan(m_1^\beta(\ep)\,)| \\
&+ \sum_{i=2}^n |\arctan(m_i^\beta(\ep)\,) - \arctan(m_{i-1}^\beta(\ep)\,)| \\
&+ |\arctan(m_n^\beta(\ep)) - \pi| + |\pi - \arctan(m_1^\tau)| \\
&+ \sum_{i=2}^n |\arctan(m_i^\tau(\ep)\,) - \arctan(m_{i-1}^\tau(\ep)\,)| \\
&+ |\arctan(m_n^\tau(\ep)\,) - \pi|
\end{split}
\]
which can be rewritten as
\begin{equation}
\begin{split}
K(\partial S_\ep) & = |\pi - \arctan(m_1^\beta(\ep)\,)| + |\arctan(m_n^\beta(\ep)\,) - \pi| \\
&+ |\pi - \arctan(m_1^\tau(\ep)\,)| + |\arctan(m_n^\tau(\ep)\,) - \pi| \\
&+ K(\beta_\ep) + K(\tau_\ep)
\end{split}
\label{eq: epcurv}
\end{equation}
Proposition \ref{prop: piece-curv} implies
\begin{equation}
\lim_{\ep \searrow 0} K(\beta_\ep) = K(\beta)\;\;\mbox{and}\;\; \lim_{\ep \searrow 0} K(\tau_\ep) = K(\tau).
\label{eq: piece conv}
\end{equation}
Now note that each line segment is defined by connecting two points in the pixelation of $\beta$ or $\tau$ over an interval of at most $\ep \sigma(\ep)$.  Since as $\ep \to 0$, $\ep \sigma(\ep) \to 0$, Theorem \ref{thm: onesecant} implies that
\begin{equation}
\begin{split}
\lim_{\ep \searrow 0} m_1^\beta(\ep) = \beta'(a), \;\;\lim_{\ep \searrow 0} m_n^\beta(\ep) = \beta'(b),\\
\lim_{\ep \searrow 0} m_1^\tau(\ep) = \tau'(a), \;\;\lim_{\ep \searrow 0} m_n^\tau(\ep) = \beta'(b).
\end{split} 
\label{eq: slope convergence}
\end{equation}
Combining (\ref{eq: epcurv}), (\ref{eq: piece conv}), (\ref{eq: slope convergence}) we find that
\begin{equation}
\begin{split}
\lim_{\ep \searrow 0} K(\partial S_\ep) &= |\pi - \arctan(\beta'(a))| + |\arctan(\beta'(b)) - \pi| \\
& + |\pi - \arctan(\tau'(a))| + |\arctan(\tau'(b)) - \pi| \\
&+ K(\beta) + K(\tau).
\end{split}
\label{eq: final curv}
\end{equation}
Note that $|\pi - \arctan(\beta'(a))|$ is the value of the angle between the vertical line $x = a$ and the tangent line of $\beta$ at $a$.  Similarly each other difference on the right hand side of (\ref{eq: final curv}) corresponds to an angle at one of the corners of $\partial S$.  Therefore the right hand side of the (\ref{eq: final curv}) is equal to the $K(\partial S)$, so the corollary holds.
\end{proof}

\newpage
\section{Approximations of PL Sets and Morse Theory for Pixelations}
\label{s: 4}
\setcounter{equation}{0}
In previous sections we have dealt only with  the simple regions and we investigated mainly  \emph{geometric} properties of these regions and their pixelation.  In this section we turn our attention to the relationship between the topology of a $PL$-set and those of its pixelations.   

Surprisingly, this is a nontrivial matter. In general, the homotopy type of a region may not be not preserved when taking the pixelation.  Worse, the homotopy type may never be recovered in any pixelation, even for small resolutions $\ep$.  This type of bad behavior can happen even for a simple PL case.  Consider the set $S$ composed of the rays starting from the origin and proceeding in the positive direction with slopes $\frac{1}{2}$ and $\frac{1}{7}$ (see Figure \ref{fig: half}).  A careful examination of $P_\ep(S)$ reveals the existence of cycle.  Worse yet, the pixelations for smaller values of $\ve$ are simply contractions of the larger pixelations.  This means that $P_\ep(S)$ contains a cycle for all small $\ep$.  For a  taste of how much worse can this get we refer to Appendix \ref{s: a}.

\begin{figure}[ht]
\centering{\includegraphics[height=2in,width=2in]{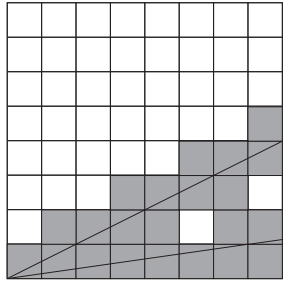}}
\caption{\sl The pixelation of the lines with slope $\frac{1}{2}$ and $\frac{1}{7}$. After this point the pixelations of the upper and lower lines diverge permanently and no more holes are formed.}
\label{fig: half}
\end{figure}

First, some good news. The next two results imply that if two compact planar sets  are disjoint, then their pixelations will also be disjoint for sufficiently fine resolutions. 


\begin{theorem}
Let $S \subset \bR^2$ be a compact set and let $x \in \mathbb{R}^2$ such that $x \notin S$.  Set
\[
m := \inf_{s\in S} d(x,s),
 \]
where $d(\bullet, \bullet)$ is the Euclidean metric on $\bR^2$, so that $m$ is the Hausdorff distance from $x$ to $S$.
Then $\forall \ep < \frac{m}{3}$, $x \notin P_\ep(S)$.
\label{thm: inc}
\end{theorem}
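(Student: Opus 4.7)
The plan is to argue directly from the definition of a pixel and its diameter. Since the half-open $\ve$-pixels partition $\bR^2$, the point $x$ lies in exactly one pixel, call it $P_x$. By definition, $x\in P_\ve(S)$ precisely when $P_x\cap S\neq\emptyset$.

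First I would note that the pixel $P_x$ is a square of side length $\ve$, so its $\|\bullet\|_\infty$-diameter is $\ve$ and its Euclidean diameter is $\ve\sqrt{2}$. Hence any two points $x,s\in P_x$ satisfy $d(x,s)\leq\ve\sqrt{2}$.

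Now I would argue by contradiction: suppose $x\in P_\ve(S)$ with $\ve<m/3$. Then there exists $s\in P_x\cap S$, so
\[
d(x,s)\leq\ve\sqrt{2}<\frac{m\sqrt{2}}{3}<m,
\]
since $\sqrt{2}/3<1$. This contradicts the definition $m=\inf_{s'\in S}d(x,s')$.

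There is essentially no obstacle here; the estimate $\sqrt{2}/3<1$ is what makes the stated threshold $m/3$ work (in fact, any $\ve<m/\sqrt{2}$ would suffice, so $m/3$ is a comfortable margin). The only minor bookkeeping point is the half-open convention for pixels, which guarantees that ``$x$ lies in a pixel of $P_\ve(S)$'' is equivalent to ``the unique pixel containing $x$ meets $S$''; this is needed so that we can actually produce the point $s\in S$ within $\ve\sqrt{2}$ of $x$.
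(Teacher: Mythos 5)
Your argument is exactly the paper's: both observe that two points in a common $\ve$-pixel are at Euclidean distance at most $\ve\sqrt{2}$, so $\ve<m/3$ forces $\ve\sqrt{2}<m$, contradicting the definition of $m$ if the pixel containing $x$ met $S$. The proposal is correct and takes essentially the same route, with only the added (harmless) remark about the half-open pixel convention.
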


\begin{proof}For $x$ to lie in $P_\ep (S)$, both $x$ and a point of $S$ must lie in the same pixel.  However the furthest two points can be apart in a square of side length $\ep$ is $\ep \sqrt{2}$ (if they lie on opposite corners of the square.)  But since $\ep < \frac{m}{3}$, $\ep \sqrt{2} < \frac{m \sqrt{2}}{3} < m$, so it is impossible for a point of $S$ to lie in the same pixel as $x$.  Therefore $x \notin P_\ep (S)$. 
\end{proof}

\begin{corollary}
Let $S \subset \bR^2$ be a compact set with finitely many connected components.  Then for sufficiently small $\ep$, the number of connected components of $P_\ep(S)$ agrees with the number of connected components of $S$.
\label{cor: seperate}
\end{corollary}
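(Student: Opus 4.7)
The plan is to leverage Theorem \ref{thm: inc} and Proposition \ref{pro: union} to reduce the question to two independent claims about the pixelation of a single connected component: (a) the pixelations of distinct components are disjoint once $\ep$ is sufficiently small, and (b) the pixelation of a connected compact set is connected. Let $S_1,\dots,S_k$ be the connected components of $S$. Since $S$ is compact with only finitely many components, each $S_i$ is a closed subset of $S$, hence compact, and the components are pairwise disjoint. By compactness, the number
\[
m := \min_{i\neq j} \dist(S_i, S_j)
\]
is strictly positive.

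To establish (a), I would argue directly from the geometry of pixels rather than citing Theorem \ref{thm: inc} pointwise. Suppose a single pixel $Q$ were to intersect both $S_i$ and $S_j$ for some $i\neq j$. Then $Q$ would contain points $s_i \in S_i$ and $s_j \in S_j$, and since the diameter of an $\ep$-pixel is $\ep\sqrt{2}$, this would force $d(s_i, s_j) \le \ep\sqrt{2}$, contradicting $d(S_i, S_j) \ge m$ as soon as $\ep < m/\sqrt{2}$. (Alternatively, the hypothesis of Theorem \ref{thm: inc} can be applied: any point of $S_i$ lies at distance at least $m$ from $S_j$, so for $\ep < m/3$ no point of $S_i$ belongs to $P_\ep(S_j)$, and a similar one-pixel separation argument then upgrades this to disjointness of pixelations.) Combined with Proposition \ref{pro: union}, this yields
\[
P_\ep(S) \;=\; \bigsqcup_{i=1}^{k} P_\ep(S_i) \qquad \text{for all } \ep < m/\sqrt{2}.
\]

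For (b), I would show that $P_\ep(S_i)$ is connected whenever $S_i$ is. Each pixel of $P_\ep(S_i)$ meets $S_i$ by definition, so $S_i \subseteq \overline{P_\ep(S_i)}$ and each pixel of $P_\ep(S_i)$ is joined to $S_i$ via a point in its closure. Consequently $\overline{P_\ep(S_i)}$ is the union of the connected set $S_i$ with a collection of (closed) squares each of which meets $S_i$, and such a union is connected. Since passing to the closure does not change the number of connected components of a pixelation (the pixels are half-open squares whose closures overlap only on null sets that lie inside adjacent pixels of the same pixelation), $P_\ep(S_i)$ itself is connected.

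Combining (a) and (b), $P_\ep(S)$ is a disjoint union of $k$ nonempty connected sets for every $\ep < m/\sqrt{2}$, so its number of connected components equals $k$, the number of components of $S$. The main obstacle is really the minor technicality in (b) arising from the half-open convention for pixels; the separation statement (a) is essentially forced by the diameter bound on pixels once $m > 0$, which is where the finiteness and compactness hypotheses are used.
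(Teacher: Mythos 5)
Your argument is correct in outline but follows a genuinely different route from the paper's. The paper never proves that the pixelation of a connected set is connected; instead it gets the upper bound on the number of components essentially for free (every component of $P_\ep(S)$ contains a pixel meeting $S$, and every component of $S$ is connected inside $P_\ep(S)$, so the induced map from components of $S$ onto components of $P_\ep(S)$ is a surjection, valid for \emph{all} $\ep$), and then gets the lower bound by surrounding each component $K$ of $S$ with the tube $T_K$ of radius $\delta/2$ and using Theorem \ref{thm: inc} to show $\pato T_K\cap P_\ep(S)=\emptyset$ for $\ep<\delta/6$, which forces distinct components of $S$ to lie in distinct components of $P_\ep(S)$. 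Your decomposition $P_\ep(S)=\bigcup_i P_\ep(S_i)$ via Proposition \ref{pro: union}, together with connectedness of each $P_\ep(S_i)$, is more informative: it identifies the components of $P_\ep(S)$ exactly as the sets $P_\ep(S_i)$, whereas the paper only counts them. Your step (b) is also simpler than you make it: every pixel of $P_\ep(S_i)$ meets $S_i$ itself (not merely its closure) and $S_i\subset P_\ep(S_i)$, so $P_\ep(S_i)$ is a union of connected sets of the form $Q\cup S_i$ all containing the common connected set $S_i$, hence connected; no passage to closures or half-open bookkeeping is needed.

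One step does need repair. The paper identifies pixels with closed $\|\bullet\|_\infty$-balls of radius $\ep/2$, so two distinct pixels need not be disjoint: adjacent pixels share an edge or a corner. Consequently, ruling out a \emph{single} pixel that meets both $S_i$ and $S_j$ does not by itself make $P_\ep(S_i)$ and $P_\ep(S_j)$ disjoint --- they could still touch along the common boundary of two adjacent pixels, in which case their union would be connected and your component count would fail. The fix is the same diameter estimate applied to a pair of intersecting pixels: if $Q_1\cap Q_2\neq\emptyset$ then $\mathrm{diam}(Q_1\cup Q_2)\leq 2\sqrt{2}\,\ep$, so for $\ep<m/(2\sqrt{2})$ no pixel of $P_\ep(S_i)$ can meet a pixel of $P_\ep(S_j)$; the pixelations are then genuinely disjoint, indeed at positive distance from one another, so they are precisely the connected components of $P_\ep(S)$.
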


\begin{proof}
First note that $P_\ep(S)$ cannot have more connected components than $S$, since $P_\ep(S)$ contains only pixels that intersect $S$.

Since $S$ is compact with finitely many components, there is a minimum positive distance $\delta$ between components.  For each component $K$ of $S$ define
\[
T_K := \{x \in \bR^2 : \inf_{k \in K} d(x,k) \le \frac{\delta}{2} \}
\]
and note that its topological boundary is
\[
\pato T_K = \{x \in \bR^2 : \inf_{k \in K} d(x,k) = \frac{\delta}{2} \}
\]
Then each point in $\partial T_K$ is $\frac{\delta}{2}$ away from $K$ in the Hausdorff metric.  Furthermore, since $\delta$ is the minimum distance between components of $S$, each point in $\partial T_K$ is at least $\frac{\delta}{2}$ away from any connected component.  Therefore, for $\ep < \frac{\delta}{6}$ we have for each $K$
\[
\partial T_K \cap P_\ep(S) = \emptyset
\]
This implies that for sufficiently small $\ep$ each connected component of $P_\ep(S)$ intersects at most one connected component of $S$.  However, since
\[
S \subset P_\ep(S)
\]
it is also true that each connected component of $S$ is contained within one connected component of $P_\ep(S)$.  Therefore for sufficiently small $\ep$, $P_\ep(S)$ and $S$ have the same number of connected components.
\end{proof}

The corollary guarantees that   the zeroth Betti number of a set coincides with those of its sufficiently fine pixelations.  Theorem \ref{thm: inc} also suggests that, for small $\ep$, the only way that the topological type of $P_\ep(S)$ can disagree with that of $S$ is if $P_\ep(S)$ has additional cycles (since every ``real'' cycle will show up in the pixelation).   We will refer  to these cycles as \emph{holes}.  More formally  the holes are cycles in $P_\ep(S)$ that are not contained in the image of the inclusion induced morphismlations.  Theorem \ref{thm: inc} also suggests that, for small $\ep$, the only way that the topological type of $P_\ep(S)$ can disagree with that of $S$ is if $P_\ep(S)$ has additional cycles (since every ``real'' cycle will show up in the pixelation).   We will refer  to these cycles as \emph{holes}.  More formally  the holes are cycles in $P_\ep(S)$ that are not contained in the image of the inclusion induced morphism
\[
H_1(S,\bZ)\ra H_1\bigl(\, P_\ep(S),\bZ\,\bigr).
\]
Thus recovery of $S$ from $P_\ep(S)$ will depend on distinguishing the cycles of $P_\ep(S)$ that correspond to real cycles from $S$ from those that are merely artifacts of the pixelation. To discard  these holes, we  adopt a strategy inspired from Morse theory.

\begin{definition}
Let $S\subset \bR^2$ be a compact set and $\ep>0$ 

\begin{enumerate}

\item For every $\ep$-generic $x$ we set  
\[
\bn_\ep(x) =\bn_{S,\ep}:= \# \text{ of connected components of }C_\ep(S,x).
\]
(When the  set $S$  is  understood from context we use the simpler notation $\bn_\ep$ instead of $\bn_{S,\ep}$.) We will refer to $\bn_\ep(x)$ as the \emph{stack counter function} of $S$.

\item We define 
\[
\bn(x) =\bn_S(x):= \# \text{ of components of }\bigl\{\,y\in\bR;\;\;(x,y)\in  S\,\bigr\}.
\]
We will refer to $\bn_S(x)$ the \emph{component counter} of  $S$.

\item A \emph{jumping point}  of $\bn_S$ is a  real number $x_0$ such that  
\[
\bn(x_0)\neq \bn(x_0^-):=\lim_{x\nearrow x_0} \bn(x) \;\;\mbox{or}\;\;\bn(x)\neq \bn(x_0^+):=\lim_{x\searrow x_0}\bn(x).
\]
 We denote by $\eJ_S$ the set of jumping points of $\bn_S$. We will refer to $\eJ_S$ as the \emph{jumping set} of $S$.

\item A \emph{jumping point} of $\bn_\ep$ is a  real number $x_0\in\ep\bZ$ such that  
\[
\bn_\ep(x_0^-)\neq \bn(x_0^+).
\]
 We denote by $\eJ_{S,\ep}$ the set of jumping points of $\bn_{S,\ep}$.  We will refer to it as the $\ep$-\emph{jumping set} of $S$.
\end{enumerate}\qed
\end{definition}


The function $\bn_\ep$ tells us how many stacks are in a column.   A cycle has two ``walls'' and a ``gap.'' That is to say, if there is a hole in $P_\ep(S)$, there will be a set of columns which have more components than their neighbors, since for a hole to close up stacks must overlap.  Thus jumps  of $\bn_\ep$ are a first indicator of the presence of    cycles in $P_\ep(S)$.  To decide  whether they are    holes, as opposed to cycles coming from $S$  we will rely on our next key technical result which is a substantial refinement of  Theorem \ref{thm: inc}. 

\begin{theorem}[Separation Theorem]
Let $f,g:[a,b]\ra \bR$ be  two Lipschitz continuous  functions such that $f(x) \le g(x)$, $\forall x\in[a,b]$. Fix $\ep>0$ and suppose that $\exists x_0\in  [a,b]$ such that
\begin{equation}
 g(x_0) - f(x_0 )\leq g(x)-f(x),\;\;\forall x\in[a,b],
\label{eq: gap0}
\end{equation}
Then  for any  $\ep>0$ such that
\begin{equation}
\bigl(\,3+ \min(\|f'\|_{\infty},\|g'\|_{\infty})\,\bigr) \ep \leq g(x_0) - f(x_0 )
\label{eq: gap1}
\end{equation}
and any  $\ep$-generic $x \in [a,b] \setminus \ep \bZ$  the column $C_\ep(G,x)$ has two components.  In other words,  if
\[
\min_{x\in[a,b]}\bigl(\, g(x)-f(x)\,\bigr)\geq  \bigl(\,3+ \min(\|f'\|_{\infty},\|g'\|_{\infty})\,\bigr) \ep ,
\]
then for any $\ep$-generic $x\in [a ,b]$  we have
\[
\bn_{G,\ep}(x)=\bn_G(x).
\]
\label{thm: sep}
\end{theorem}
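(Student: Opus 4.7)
The plan is to show that within the column over $x$, the two graphs contribute a pair of disjoint stacks separated by at least one empty pixel. Writing $G := \Gamma_f \cup \Gamma_g$, I would first invoke Proposition \ref{pro: union} to obtain $C_\ep(G, x) = C_\ep(\Gamma_f, x) \cup C_\ep(\Gamma_g, x)$, and then apply the Pixelated Intermediate Value Theorem (Theorem \ref{thm: ivt}) to each piece on the right to conclude that $C_\ep(\Gamma_f, x)$ and $C_\ep(\Gamma_g, x)$ are each a single stack. This immediately bounds the number of components of $C_\ep(G, x)$ by two, so the heart of the matter is establishing $T_\ep(\Gamma_f, x) < B_\ep(\Gamma_g, x)$ with enough slack to accommodate at least one full empty pixel between the two stacks.

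For the quantitative step, for any $x' \in I_\ep(x)$ the pixel containing $(x', f(x'))$ has top coordinate at most $f(x') + \ep$, hence
\[
T_\ep(\Gamma_f, x) \le \max_{x' \in I_\ep(x)} f(x') + \ep,
\]
and symmetrically $B_\ep(\Gamma_g, x) \ge \min_{x' \in I_\ep(x)} g(x') - \ep$. Assume without loss of generality that $\|f'\|_\infty \le \|g'\|_\infty$. Since $I_\ep(x)$ has width $\ep$, the Lipschitz property yields $\max_{I_\ep(x)} f - \min_{I_\ep(x)} f \le \|f'\|_\infty \ep$. Picking a minimizer $x^*$ of $g$ on $I_\ep(x)$, using hypothesis (\ref{eq: gap0}) at $x^*$ and then hypothesis (\ref{eq: gap1}), I get
\[
\min_{I_\ep(x)} g - \max_{I_\ep(x)} f \ge \bigl(g(x^*) - f(x^*)\bigr) - \|f'\|_\infty \ep \ge \bigl(g(x_0) - f(x_0)\bigr) - \|f'\|_\infty \ep \ge 3\ep.
\]
Plugging back yields $B_\ep(\Gamma_g, x) - T_\ep(\Gamma_f, x) \ge 3\ep - 2\ep = \ep$, so a full empty pixel separates the two stacks, forcing $C_\ep(G, x)$ to have at least, hence exactly, two connected components.

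The concluding identity $\bn_{G,\ep}(x) = \bn_G(x)$ then follows, since hypothesis (\ref{eq: gap1}) also guarantees $g(x) > f(x)$ pointwise, so $\bn_G(x) \equiv 2$. The main obstacle I foresee is bookkeeping: one must confirm that the $\pm \ep$ slack in the $T_\ep$ and $B_\ep$ estimates is the correct sharp value coming from the pixel indexing, and one must absorb the Lipschitz oscillation term against whichever of $f$ or $g$ has the smaller Lipschitz constant — this is precisely why the hypothesis (\ref{eq: gap1}) uses $\min(\|f'\|_\infty, \|g'\|_\infty)$ rather than either constant individually. Beyond this accounting, the argument is a clean combination of the Pixelated IVT with Lipschitz oscillation control on an interval of width $\ep$.
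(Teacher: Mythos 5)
Your proposal is correct and follows essentially the same route as the paper: decompose $C_\ep(G,x)$ into the two single-stack columns of $f$ and $g$ via the Pixelated Intermediate Value Theorem, bound $T_\ep(f,x)$ and $B_\ep(g,x)$ with a $\pm\ep$ pixel slack, and absorb the Lipschitz oscillation over $I_\ep(x)$ against whichever function has the smaller Lipschitz constant (your WLOG corresponds exactly to the paper's two cases). The quantitative accounting $B_\ep(g,x)-T_\ep(f,x)\ge 3\ep-2\ep=\ep$ matches the paper's computation.
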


\begin{proof} 
Note that 
\[
C_\ep(G, x) = C_\ep(f, x) \cup C_\ep(g, x),
\]
and furthermore,  Theorem \ref{thm: ivt} implies that each of these columns is connected.  Therefore $C_\ep(G, x)$ will have two components only if $C_\ep(f,x)$ and $C_\ep(g,x)$ do not intersect.  Since $f \le g$ and $x$ is $\ep$-generic, this will occur when 
\[
T_\ep(f,x) < B_\ep(g,x)
\]
or equivalently,
\[
B_\ep(g, x) - T_\ep(f,x) \geq \ep
\]

Now fix $x\in[a,b]$ and let $i \in \bZ$ such that $i \ep < x < (i+1)\ep$.   Choose $x_f,x_g\in [i\ep, (i+1)\ep)]$ such that
\[
f(x_f)=\max_{x\in [i\ep,(i+1)\ep]} f(x),\;\; g(x_g)= \min_{x\in [i\ep,(i+1)\ep]} g(x).
\]
Therefore we have
\[
B_\ep(g,x)\geq g(x_g)-\ep,\;\;  T_\ep(f,x)\leq f(\,x_f\,)+\ep,
\]
so that
\[
B_\ep(g,x)-T_\ep(f,x)\geq  g(x_g)-f(x_f) -2\ep.
\]
We distinguish two cases.

\medskip

\noindent {\bf Case 1.} $\|g'\|_\infty\leq \|f'\|_\infty$. We have
\[
B_\ep(g,x)-T_\ep(f,x)=g(x_g)-g(x_f)+g(\,x_f\,)-f(\,x_f\, ) -2\ep
\]
\[
\stackrel{(\ref{eq: gap0})}{\geq}  g(x_g)-g(x_f)+ g(x_0)-f(x_0)-2\ep
\]
\[
\geq g(x_0)-f(x_0)-\ep\bigl(\,2+\|g'\|_\infty\,\bigr)\stackrel{(\ref{eq: gap1})}{\geq}\ep.
\]
{\bf Case 2.}   $\|f'\|_\infty\leq \|g'\|_\infty$.   We have
\[
B_\ep(g,x)-T_\ep(f,x)=g(x_g)-f(x_g)+f(\,x_g\,)-f(\,x_f\, ) -2\ep
\]
\[
\stackrel{(\ref{eq: gap0})}{\geq} g(x_0)-f(x_0)+ f(x_g)-f(x_f) -2\ep
\]
\[
\geq g(x_0)-f(x_0)-\ep\bigl(\,2+\|f'\|_\infty\,\bigr)\stackrel{(\ref{eq: gap1})}{\geq}\ep.
\]
\end{proof}

To proceed further we need to  introduce some basic terminology.   
 
We define a  \emph{convex polygon} to be a compact  set in $\bR^2$ that is the intersection of  finitely many  closed half-planes.  Note that points, and straight line segments are examples of  convex polygons.       A \emph{$PL$ set} in $\bR^2$ is a  finite union of convex polygons. Note that the topological boundary of a $PL$ set is a finite union of   straight line segments and  points.   A  \emph{vertex}  of a $PL$ set $S$  is  a point $p$ on the topological  boundary $\pato S$  of $S$ such that,       for all $r>0$ sufficiently small,  the intersection  of $\pato S$ with the closed ball of radius $r$ and center $p$ is not a straight line segment. We denote by $\eV_S$ the set of vertices of a $PL$-set $S$.



\begin{definition}   
\begin{enumerate}
\item A \emph{convex polygon} is a compact subset of $\bR^2$ which is the intersection of finitely many closed half-planes (note that line segments and points are examples of convex polygons).
\item A \emph{$PL$ set} (or \emph{piecewise linear set}) in $\bR^2$ is a finite union of convex polygons.
\item A \emph{vertex} of a $PL$ set $S$ is a point $p$ on the topological boundary $\pato S$ such that for all sufficiently small $r > 0$, the intersection of $\pato S$ with the closed ball of radius $r$ and center $p$ is not a straight line segments.
\item For a $PL$ set $S \subset \bR^2$, the set of vertices $eV_S$ is the collection of all vertices in $S$.
\item A $PL$ subset in $S\subset \bR^2$  is called \emph{generic} if for any two vertices $p_1, p_2 \in \eV_S$ 
\[
x(p_1) \neq x(p_2).
\]
\end{enumerate}\qed
\label{def: generic PL}
\end{definition}
We will restrict our approximation technique to the $PL$ case in order to simplify the conclusion of Theorem \ref{thm: sep}.  Since our technique for determining holes will be motivated by applying Morse theory to projection onto the $x$-axis, we will also require the $PL$ set to be generic (to avoid complications arising from clusters of critical points sharing the same critical value).
Note that for a $PL$ set  $S$, the set of jumping points  is contained in the set of $x$-coordinates  of the vertices  of $S$, that is
\[
\eJ_S\subset \bigl\{ x(p);\;\;p\in\eV_S\,\bigr\}.
\]
This inclusion could be strict. Take for example  a $V$-shaped set, with the  bottom vertex  of the letter $V$ situated at the origin. Then $0$ is not a jumping point of $\bn$.




\begin{theorem}
Let $S$ be a generic $PL$ set with jumping set $\eJ_S$. Then there exist  $\nu =\nu(S)\in \bZ_{>0}$ and $\ep_0=\ep_0(S)>0$, \emph{depending only on $S$}, such that,  if  $0<\ep <\ep_0$ and $x$ is an $\ep$-generic value such that
\[
\dist(x,\eJ_S)\geq \nu\ep,
\]
 then $\bn_{S,\ep}(x) = \bn_S(x)$.
\label{thm: noisebound}
\end{theorem}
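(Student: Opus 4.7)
The plan is to use the Separation Theorem (Theorem \ref{thm: sep}) strip by strip. Away from $\eJ_S$, the component counter $\bn_S$ is locally constant, so between consecutive jumping points the set $S$ decomposes into a fixed number of elementary strips, and we must control how close the pixelations of neighboring strips can get.

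First I would write $\eJ_S = \{j_1 < j_2 < \cdots < j_N\}$ and fix an open interval $(j_k, j_{k+1})$. On this interval, $\bn_S(x) \equiv m$ is constant, so $S \cap \eS_{[j_k, j_{k+1}]}$ is the disjoint union of $m$ elementary sets $S^{(1)}, \dotsc, S^{(m)}$, each of the form $S^{(i)} = \{(x,y) : j_k < x < j_{k+1},\ \beta_i(x) \le y \le \tau_i(x)\}$ with $\beta_i, \tau_i$ piecewise linear and $\tau_i(x) < \beta_{i+1}(x)$ for all $x$ in the interior. By Proposition \ref{pro: ivt2}, each column of $P_\ep(S^{(i)})$ has exactly one stack, so the desired equality $\bn_{S,\ep}(x) = m$ will follow once I can guarantee that these $m$ pixelated strips remain mutually disjoint at the column over $x$. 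Applying Theorem \ref{thm: sep} to each consecutive pair, this reduces to showing that the gap function $\Delta_i(x) := \beta_{i+1}(x) - \tau_i(x)$ satisfies $\Delta_i(x) \ge (3 + M)\ep$, where $M$ is the maximum Lipschitz constant among all the $\beta_i, \tau_i$ across all subintervals (a finite quantity depending only on $S$).

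The main step, and the key geometric input, is bounding $\Delta_i(x)$ from below in terms of $\dist(x, \eJ_S)$. Since $S$ is a generic PL set, each gap function $\Delta_i$ is continuous and piecewise linear on $[j_k, j_{k+1}]$, strictly positive in the interior, and vanishes only possibly at the endpoints (corresponding to merge/split events). Let $s_{\min} > 0$ be the minimum over all $i, k$ of the absolute value of the nonzero one-sided slopes of $\Delta_i$ at any $j_k$ where $\Delta_i$ vanishes; this is a finite positive constant determined by the finite set of edge slopes of $\partial S$. A local analysis near each jumping point (using that $\Delta_i$ is affine on small one-sided neighborhoods by genericity) then yields
\[
\Delta_i(x) \ge s_{\min} \cdot \dist(x, \eJ_S)
\]
for $x$ sufficiently close to $\eJ_S$, and $\Delta_i(x)$ is bounded below by a positive constant away from $\eJ_S$.

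Finally I would set
\[
\nu := \left\lceil \frac{3 + M}{s_{\min}} \right\rceil
\]
and choose $\ep_0$ small enough that (i) the lower bound $\Delta_i(x) \ge s_{\min} \cdot \dist(x, \eJ_S)$ holds on the relevant neighborhoods, and (ii) $\nu \ep_0$ is smaller than the minimum spacing between consecutive jumping points so that $\{x : \dist(x, \eJ_S) \ge \nu\ep\}$ still meets every subinterval $(j_k, j_{k+1})$. For any $\ep$-generic $x$ with $\dist(x, \eJ_S) \ge \nu \ep$, we then have $\Delta_i(x) \ge s_{\min} \cdot \nu \ep \ge (3+M)\ep$, so Theorem \ref{thm: sep} applies to the pair $(\tau_i, \beta_{i+1})$ and guarantees that their pixelated columns are disjoint. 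Summing over $i$ gives $\bn_{S,\ep}(x) = m = \bn_S(x)$.

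The main obstacle is the geometric estimate $\Delta_i(x) \ge s_{\min}\cdot\dist(x, \eJ_S)$ near the jumping points; one has to check that at every merge/split the relevant piecewise linear gap function actually has a definite one-sided slope, which is exactly where the genericity hypothesis (no two vertices sharing an $x$-coordinate) is used, preventing multiple simultaneous merges whose combined behavior could make the effective rate degenerate.
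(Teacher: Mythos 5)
Your proposal is correct and follows essentially the same route as the paper's proof: decompose $S$ over each interval between consecutive jumping points into stacked elementary strips, apply the Separation Theorem to each consecutive pair of bounding graphs, and handle separately the gaps that vanish at a jumping point (where genericity forces a nonzero one-sided slope of the gap function, giving the linear lower bound that determines $\nu$) and those that are bounded below (where one only shrinks $\ep_0$). The only differences are cosmetic bookkeeping — you use a single global Lipschitz bound $M$ and a single rate $s_{\min}$ where the paper tracks per-pair constants and explicitly localizes to one-sided neighborhoods where the bounding functions are affine.
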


\begin{proof} Let  $x_0<x_1<\cdots <x_\ell$ be the jumping points of  $\bn=\bn_S$.   We set
\[
\Delta x_i:=x_i-x_{i-1},\;\;\forall i=1,\dotsc, \ell,\;\;\Delta:=\min_{1\leq i\leq \ell} \Delta x_i.
\]
 Note that $\bn(x)$ is constant on each of the intervals $(x_{i-1},x_i)$.  For $i=1,\dotsc, \ell$   we set
\[
S_i:=\bigl\{ (x,y)\in S;\;\;x\in [x_{i-1},x_i]\,\bigr\}.
\]
The set $S_i$ is a disjoint union of   elementary sets (see Definition \ref{def: s-type} for notations)
\[
S(\beta_{i,j}, \tau_{i,j}),\;\;j=0,\dots, p_i,
\]
``stacked one above the other'', i.e.,
\begin{equation}
\beta_{i,0}(x)\leq \tau_{i,0}(x)< \beta_{i,1}(x) \leq \tau_{i,1}(x)< \cdots < \beta_{i,p_i}(x)\leq \tau_{i,p_i}(x),\;\;\forall x\in(x_{i-1},x_i).
\label{eq: orders}
\end{equation}
From  Proposition \ref{pro: ivt2} we deduce that for any $\ep$-generic $x\in (x_{i-1},x_i)$ we have $\bn(x)=p_i$. 

Each of the functions $\beta_{i,j}$ and $\tau_{i,j}$ is piecewise linear. Let $r_{i,j}$ be the smallest    width\footnote{The width of a line segment is the length of its projection on the $x$-axis.}  of a line segment of the graphs $\beta_{i,j}$ and $\tau_{i,j}$, and we set
\begin{equation}
r_i:= \frac{1}{3}\min_{0\leq j\leq p_i} r_{i,j}. 
\label{eq: ri}
\end{equation}
By definition, $r_i\leq\frac{1}{3}\Delta x_i$.  We set
\[
S[x_{i-1}+r_i, x_i-r_i]:=\bigl\{ \,(x,y)\in S;\;\; x\in [x_{i-1}+r_i, x_i-r_i]\,\bigr\}.
\]
The set $S[x_{i-1}+r_{i-1}, x_i-r_i]$  is a collection of $p_i$ elementary sets stacked  one above the other which have positive Hausdorff distance between them. Theorem \ref{thm: inc} implies that there exists $\delta_i>0$ such that, for $\ep\in(0,\delta_i)$  the pixelation $P_\ep\bigl(\,S[x_{i-1}+r, x_i-r_i]\,\bigr)$ has exactly $p_i$ components. Therefore we have proven that
\begin{equation}
\exists\delta_i>0\;\;\mbox{such that};\;\forall \ep\in (0,\delta_i)\;\; x\in [x_{i-1}+r_i,x_i-r_i]:\;\;\bn(x)=\bn_\ep(x).
\label{eq: away}
\end{equation}
Set 
\[
y_i:=x_{i-1}+r_i, \;\;z_i:= x_i-r_i. 
\]
On the interval $[x_{i-1},y_i]$ each of the functions $\beta_{i,j}$ and $\tau_{i,j}$ is linear and we denote by $m^-(\beta_{i,j})$ and respectively $m^-(\tau_{i,j})$ their slopes. For each $x\in (x_{i-1},x_i)$ and each  $j=1,\dotsc, p_i$ we  define the gaps 
\[
\gamma_{i,j}(x)=\beta_{i,j}(x)-\tau_{i,j-1}(x),\;\;\gamma_{i,j}:=\min\bigl\{ \gamma_{i,j}(x_{i-1}),\gamma_j(y_i)\bigr\}=\min_{x\in [x_{i-1},y_i]}\gamma_{i,j}(x),
\]
\[
\Gamma_{i,j}:=\max\bigl\{ \gamma_{i,j}(x_{i-1}),\gamma_{i,j}(y_i)\bigr\}=\max_{x\in [x_{i-1},y_i]}\gamma_{i,j}(x).
\]
We plan to invoke the Separation Theorem \ref{thm: sep}.  We want to prove that there exists $\ep_0>0$ and $\nu>0$ such that for $\ep<\ep_0$ we have
\begin{equation}
\min\bigl\{ \gamma_{i,j}(x);\; x_{i-1}+\nu\ep\leq x\leq y_i-\nu\ep\,\bigr\}\geq  \Bigl(\,3+\min\bigl(\, |m^-(\beta_{i,j})|, |m^-(\tau_{i,j-1})|\,\bigl)\,\Bigr)\ep.
\label{eq: min-gap}
\end{equation}
Note that if $2\nu\ep<\Delta x_i$, then
\[
\min\bigl\{ \gamma_{i,j}(x);\; x_{i-1}+\nu\ep\leq x\leq y_i-\nu\ep\,\bigr\}=\gamma_{i,j}+\nu |m^-(\beta_{i,j})-m^-(\tau_{i,j-1})|\ep.
\]
We can now rewrite (\ref{eq: min-gap}) as
\begin{equation}
\gamma_{i,j}\geq \Bigl(\,3+\min\bigl(\, |m^-(\beta_{i,j})|, |m^-(\tau_{i,j-1})|\,\bigl) - \nu |m^-(\beta_{i,j})-m^-(\tau_{i,j-1})| \,\Bigr)\ep.
\label{eq: min-gap1}
\end{equation}
To solve the last inequality  we distinguish two cases.

\medskip

\noindent {\bf Case 1.} $\gamma_{i,j}=0$. In this case the slope of $\beta_{i,j}$ must be different from the slope of $\tau_{i,j-1}$ and we choose $\nu=\nu^-_{i,j}$ large enough so that
\[
3+\min\bigl(\, |m^-(\beta_{i,j})|, |m^-(\tau_{i,j-1})|\,\bigl) - \nu |m^-(\beta_{i,j})-m^-(\tau_{i,j-1})| <0,
\]
e.g.,
\[
\nu^-_{i,j}=\left\lfloor\frac{3+\min\bigl(\, |m^-(\beta_{i,j})|, |m^-(\tau_{i,j-1})|\,\bigl) }{|m^-(\beta_{i,j})-m^-(\tau_{i,j-1})|}\right\rfloor+1.
\]
We then choose $\ep^-(i,j)$  small enough such that $2\nu_{i,j}\ep_0<\Delta x_i$, e.g.,
\[
\ep^-_0(i,j)=\frac{\Delta}{10\nu_{i,j}}.
\]
{\bf Case 2.} $\gamma_{i,j}\neq 0$. In this case    we choose  $\ep_0=\ep_0(i,j)$  small enough such that
\[
\gamma_{i,j}>\Bigl(\, 3+\min\bigl(\, |m^-(\beta_{i,j})|, |m^-(\tau_{i,j-1})|\,\bigl) \,\Bigr)\ep_0,
\]
e.g.,
\[
\ep^-_0(i,j)=\frac{\gamma_{i,j}}{2\Bigl(\, 3+\min\bigl(\, |m^-(\beta_{i,j})|, |m(\tau_{i,j-1})|\,\bigl) \,\Bigr)},
\]
Next we choose $\nu=\nu^-_{i,j}$ such that $2\nu\ep_0<\Delta x_i$, e.g.,
\[
\nu^-_{i,j}=\left\lfloor\frac{\Delta }{10\ep_0(i,j)}\right\rfloor.
\]
Finally, we define set
\[
\ep_i^-:=\min_{0\leq j\leq p_i}\ep^-(i,j),\;\; \nu_i^-=\max_{0\leq j\leq p_i}\nu^-_{i,j}.
\]
Theorem \ref{thm: sep} implies
\begin{equation}
\exists \ep_i^->0,\;\;\nu_i^->0\;\;\mbox{such that}\;\;\forall \ep<\ep_i^-,\;\;\forall x\in [x_{i-1}+\nu_i^-\ep, x_{i-1}+r_i]:\;\;\bn(x)=\bn_\ep(x).
\label{eq: away1}
\end{equation}
Arguing in a similar fashion      we deduce
\begin{equation}
\exists \ep_i^+>0,\;\;\nu_i^+>0\;\;\mbox{such that}\;\;\forall \ep<\ep_i^+,\;\;\forall x\in [x_i-r_i, x_i-\nu_i^+\ep]:\;\;\bn(x)=\bn_\ep(x).
\label{eq: away2}
\end{equation}
Now set 
\[
\nu_i:=\max(\nu_i^-,\nu_i^+),\;\;\nu:=\max_{1\leq i\leq \ell}\nu_i,
\]
\[
\ep_i:=\min(\ep_i^-,\ep_i^+,\delta_i),\;\;\ep_0=\min_{1\leq i\leq \ell}\ep_i.
\]
Theorem \ref{thm: noisebound} now follows from (\ref{eq: away}), (\ref{eq: away1}) and (\ref{eq: away2}).
\end{proof}

This theorem tells us that jumps of $\bn_\ep$ occur within $\nu(S)$ pixels  from the jumps in $\bn$.  A priori, it could be possible  that, given a jumping point $x_0$ of $\bn$,  there is no jump   in $\bn_\ep$  within $\nu(S)$ pixels   of $x_0$.
\begin{theorem}
Let $S$ be a generic PL set, and $\ep_0=\ep_0(S)$, $\nu=\nu(S)$ as in Theorem \ref{thm: noisebound}. Then, there exists $\ep_1=\ep_1(S)$ such that  if $\ep<\min(\ep_0,\ep_1)$ and $x_0$ is a jumping point of $\bn=\bn_S$, then $\bn_\ep=\bn_{S,\ep}$ has at least one jumping point  in the interval $[x_0 - \nu \ep, x_0 + \nu \ep]$.
\label{th: critgen}
\end{theorem}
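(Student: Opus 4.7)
The plan is dual to Theorem \ref{thm: noisebound}: I would use that result to pin down the values of $\bn_\ep$ on the two $\ep$-strips flanking the candidate window $[x_0 - \nu\ep, x_0 + \nu\ep]$, and then argue that these values force a jumping point of $\bn_\ep$ inside the window. Concretely, choose $\ep_1 = \ep_1(S)$ small enough that $(2\nu + 2)\ep_1 < \Delta$, where $\Delta$ is the minimum gap between consecutive elements of $\eJ_S$. Let $k_0 \in \bZ$ be such that $x_0 \in [k_0 \ep, (k_0+1)\ep)$, and consider the strips $\eS_{\ep, k_0 - \nu - 1}$ and $\eS_{\ep, k_0 + \nu + 1}$, which sit just outside the window. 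Any $\ep$-generic $x$ in the left strip satisfies $\dist(x, x_0) > \nu \ep$, and by the choice of $\ep_1$ also $\dist(x, \eJ_S \setminus \{x_0\}) > \nu \ep$, so Theorem \ref{thm: noisebound} yields $\bn_\ep(x) = \bn_S(x) = \bn_S(x_0^-)$; symmetrically $\bn_\ep \equiv \bn_S(x_0^+)$ on the right flanking strip.

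If $\bn_S(x_0^-) \neq \bn_S(x_0^+)$, then $\bn_\ep$ takes different constant values on the two flanking strips. Since $\bn_\ep$ can only change value at points of $\ep\bZ$, there must be a jumping point of $\bn_\ep$ at some $k\ep$ with $k_0 - \nu \leq k \leq k_0 + \nu + 1$, which lies in $[x_0 - \nu\ep, x_0 + \nu\ep]$ once the one-pixel slack already present in the definition of $\nu$ from the proof of Theorem \ref{thm: noisebound} is absorbed.

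The remaining case is $\bn_S(x_0^-) = \bn_S(x_0^+) = n$ together with $\bn_S(x_0) \neq n$. For a generic PL set this forces $x_0$ to be the $x$-coordinate of a single distinguished vertex of $S$---either an isolated point of $S$, a convex cusp on the boundary, or a tangency vertex where two otherwise disjoint components of $S$ meet at a single point. In each subcase the local geometry either produces an extra pixel (and hence an extra stack) in $C_\ep(S,x)$ for $x$ in an $O(\ep)$-neighborhood of $x_0$, or else merges two nearby stacks via pixelation fattening; on the other hand the Separation Theorem (Theorem \ref{thm: sep}) ensures that for $|x-x_0|$ beyond an explicit $O(\ep)$ threshold determined by the local slopes at $x_0$, the column reverts to exactly $n$ stacks. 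The resulting mismatch forces $\bn_\ep$ to jump inside the window. I expect this second case to be the main obstacle, since it requires enumerating the admissible vertex types and verifying that the finitely many geometric constants produced by the Separation Theorem can be absorbed into the same $\nu(S)$ and $\ep_1(S)$ used above---exactly as in the construction of $\nu$ and $\ep_0$ in the proof of Theorem \ref{thm: noisebound}, which allowed those constants to depend on the full finite vertex list of $S$.
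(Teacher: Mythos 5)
Your plan is sound, and its organization genuinely differs from the paper's. You split into (A) $\bn(x_0^-)\neq\bn(x_0^+)$ and (B) $\bn(x_0^-)=\bn(x_0^+)\neq\bn(x_0)$, and in case (A) you obtain the jump purely by applying Theorem \ref{thm: noisebound} on the two flanking strips, with no local analysis at the vertex at all. The paper instead always argues one-sidedly and locally: its four (overlapping) cases are $\bn(x_0^{\pm})>\bn(x_0)$ and $\bn(x_0^{\pm})<\bn(x_0)$, and in each it computes $\bn_\ep$ on the pixel column containing $x_0$ directly (showing, e.g., that when a gap closes at $x_0$ the two adjacent stacks literally share the pixel containing the vertex $p_0$, so $\bn_\ep$ there equals $\bn(x_0)$), and then compares with $\bn_\ep(x_0\mp\nu\ep)=\bn(x_0\mp\nu\ep)$ supplied by Theorem \ref{thm: noisebound}. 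Where it applies, your case (A) is cleaner, at the cost of one pixel of slack: your jump is only guaranteed to land in $[x_0-(\nu+1)\ep,\,x_0+(\nu+1)\ep]$, which is harmless since $\nu(S)$ may be enlarged by one.

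The place that needs shoring up is your case (B), which is exactly where the paper concentrates its effort. Two remarks. First, the correct enumeration for a generic PL set is: either $p_0$ is an isolated point of $S$ (so $\bn(x_0)=n+1$), or $c\geq 2$ local branches of $S$ meet at the single vertex $p_0$ on each side (so $\bn(x_0)=n-c+1$); a ``convex cusp'' where a region pinches to a point does not change $\bn$ and is not a jumping point, so it need not be treated, while the $c$-fold crossing with $c\geq 3$, or with both branches in the same component of $S$, must be. Second, in the merging subcase the decisive fact is not ``pixelation fattening'' but that all $c$ branches actually contain the point $p_0$ itself, so their stacks in the pixel column containing $x_0$ all contain the pixel of $p_0$ and hence form a single stack for \emph{every} $\ep$; the role of $\ep_1$ is only to keep the gaps that do not close at $x_0$ from spuriously closing (and, in the isolated-point subcase, to keep the extra pixel disjoint from the rest of $P_\ep(S)$, via Theorem \ref{thm: inc}). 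With these two points supplied, your mismatch argument---$\bn_\ep\neq n$ on the column of $x_0$ versus $\bn_\ep=n$ at distance $\nu\ep$ by Theorem \ref{thm: noisebound}---closes case (B); no new constants beyond $\ep_1$ are needed for the ``outside'' part, contrary to the worry you raise at the end.
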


\begin{proof} Since $x_0$ is a jumping point of $\bn$ we have
\[
\bn(x_0^+)\neq \bn(x_0)\;\;\mbox{or}\;\;\bn(x_0)\neq \bn(x_0^-).
\]
We distinguish several cases cases.

\medskip 

\noindent {\bf Case 1.}   $\bn(x_0^-) > \bn(x_0)$. Since $S$ is compact, $\bn(x_0)>0$ so that $\bn(x_0^-)\geq 2$.  For this to happen the vertical line of $x=x_0$ must contain at least one vertex of $S$. Since $S$ is generic,  this line contains precisely one vertex of $S$, which we denote by $p_0$.

We can find $\delta>0$ sufficiently small   such that the interval  $[x_0 - \delta, x_0] $ will contain no new jumping points  of $S$.   The  set
\[
S_{[x_0-\delta,x_0]}:=\bigl\{ (x,y)\in S;\;\;x\in [x_0-\delta,x_0]\,\bigr\}
\]
is   disjoint union of   simple types regions 
\[
S(\beta_j, \tau_j),\;\;j=0,\dots, m=\bn(x_0^-)-1,
\]
``stacked one above the other'', i.e.,
\[
\beta_0(x)\leq \tau_0(x)< \beta_1(x) \leq \tau_1(x)< \cdots < \beta_{m}(x)\leq \tau_{m}(x),\;\;\forall x\in(x_0-\delta,x_0),
\]
 where $\beta_j,\tau_j$ are piecewise linear functions.  Since $\bn(x_0)<\bn(x_0^-)$ we deduce that there exists $j_0=1,\dotsc, m$ such that
 \[
 \beta_{j_0}(x_0)=\tau_{j_0-1}(x_0)\;\;\mbox{and}\;\; \gamma_j:=\beta_{j}(x_0)-\tau_{j-1}(x_0)>0,\;\;\forall j\neq j_0
 \]
 In particular, for any $\ep>0$, the $\ep$-stack of $S(\beta_{j_0},\tau_{j_0})$ over $x_0$ touches the stack of $S(\beta_{j_0-1},\tau_{j_0-1} )$ over $x_0$.

 Now choose   $\ep_1$ sufficiently small so that  for $j\neq j_0$  and $\ep<\ep_1$, the $\ep$-stack of $S(\beta_j,\tau_j)$ over $x_0$ is   disjoint  form the $\ep$-stack of $S(\beta_{j-1},\tau_{j-1})$ over $x_0$. Fix   $\ep<\min(\ep_0,\ep_1)$. The above discussion shows that
 \[
 \bn(x_0)=\bn_\ep(x_0^-).
  \]
  Theorem  \ref{thm: noisebound} now implies that
 \[
\bn_\ep((x_0-\nu\ep)^-)= \bn(x_0-\nu\ep)>\bn(x_0).
 \]
 This proves that  the interval $[x_0-r\ep, x_0]$ contains a jumping point of $\bn_\ep$.

\medskip  

\noindent {\bf Case 2.} $\bn(x_0^+)>\bn(x_0)$. This situation  can be reduced to the previous case   via the reflection
\[
\bR^2\ni (x,y)\mapsto (-x,y)\in\bR^2.
\]

\noindent {\bf Case 3.}  $\bn(x_0^-) < \bn(x_0)$.   The vertical line  $x=x_0$ contains a unique vertex $p_0$ of $S$. Moreover, this vertex   has the property that there exists a tiny disk $D$ centered at $p_0$ such that the intersection of $D$ with the open half-plane $\{x<x_0\}\subset\bR^2$ is empty.   In  particular, this shows that $p_0$ is an isolated point of the set
\[
S_{x\leq x_0}=\bigl\{(x,y)\in S;\;\;x\leq x_0\,\bigr\}.
\]
  If $\bn(x_0^-)=0$, the conclusion is obvious.  We assume that $\bn(x_0^-)>0$.     Choose $\delta>0$ such that the interval $[x_0-\delta,x_0)$ contains no jumping point of $S$. Set
\[
R:= S_{[x_0-\delta,x_0]}\setminus \{p_0\}.
\]
Then $R$ is a  union of   simple regions
\[
S(\beta_, \tau_j),\;\;j=0,1\dotsc, m=\bn(x_0^-)-1,
\]
where $\beta_j$ and $\tau_j$ are piecewise linear functions such that
\[
\beta_0(x)\leq \tau_0(x) <\beta_1(x)\leq \tau_1(x)<\cdots <\beta_m(x)\leq \tau_m(x),\;\;\forall x\in [x_0-\delta,x_0].
\]
We can find  $\ep_1=\ep_1(S)$ such that for any $\ep<\ep_1$ and any  $\ep$-generic $x\in [x_0-\delta,x_0]$ we have:
\begin{itemize}
\item $\bn_{R,\ep}(x)=\bn_S(x)=m+1=\bn_S(x_0^-)$, and

\item  the $\ep$-column of $S_{[x_0-\delta,x_0]}$  over  $x_0$  consists of $\bn(x_0)=m+2$ stacks. 

\end{itemize}

Theorem \ref{thm: noisebound} implies that
\[
\bn_{S,\ep}(x)=\bn_S(x)=\bn_S(x_0^-)=m+1\;\; \forall x\in [x_0-\delta,x_0-\nu\ep]\setminus \bZ\ep.
\]
On the other hand, $\bn_{S,\ep}(x_0^-)=m+2$.    Thus the interval $[x_0-\nu\ep,x_0]$ must contain a jumping point of $\bn_{S,\ep}$.

\medskip

\noindent {\bf Case 4.} $\bn(x_0^+)<\bn(x_0)$. This reduces the the previous case via the reflection
\[
\bR^2\ni (x,y)\mapsto (-x,y)\in\bR^2.
\]
\end{proof}

\begin{remark}    Theorem  \ref{thm: noisebound}   states that the two functions  $\bn$ and $\bn_\ep$ coincide  at points situated at a distance at least $\nu(S)$ pixels away from  the jumping points of $\bn$. On the other hand, Theorem \ref{th: critgen} shows that, for a generic $PL$ set,  within $\nu(S)$ pixels  from a jumping point of $\bn$ there must be jumping points of $\bn_\ep$.\qed
\end{remark}

\begin{definition} Let $S$ be a generic $PL$ set in $\bR^2$. 

\begin{enumerate}

\item We will refer to the integer $\nu(S)$ as the \emph{noise range} of $S$.

\item Let $\ep_0(S)$ and $\ep_1(S)$ as defined in the  Theorems \ref{thm: noisebound} and  \ref{th: critgen}. We set
\[
\hbar(S):=\min\bigl(\, \ep_0(S),\;\ep_1(S)\,\bigr),
\]
and  we will refer to it as the \emph{critical resolution} of $S$.
\end{enumerate}\qed
\label{def: constants}
\end{definition}

The next result explains the roles of the noise range and the critical resolution.

\begin{proposition}
Let $S$ be a generic PL set.   If $w>2\nu(S)$,  $\ep<\hbar(S)$ and  $[x_0 - w \ep, x_0 + w \ep]$ contains no jumping points of $\bn_\ep$, then $\bn_\ep(x_0) = \bn(x_0)$.
\label{pro: pix noise bound}
\end{proposition}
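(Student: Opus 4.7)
The plan is to obtain Proposition \ref{pro: pix noise bound} as a direct consequence of Theorems \ref{thm: noisebound} and \ref{th: critgen} by a simple contrapositive argument. The rough idea: absence of jumps of $\bn_\ep$ in a wide window around $x_0$ forces, via Theorem \ref{th: critgen}, the absence of jumps of $\bn$ in a slightly narrower sub-window, which then by Theorem \ref{thm: noisebound} pins down the value $\bn_\ep(x_0)$.

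First, I would establish the following sub-claim: under the hypotheses of the proposition, the set $\eJ_S$ of jumping points of $\bn_S$ is disjoint from the interval $I:=[x_0-(w-\nu)\ep,\,x_0+(w-\nu)\ep]$, where $\nu=\nu(S)$. Suppose for contradiction that $y\in \eJ_S\cap I$. Because $\ep<\hbar(S)\leq \ep_1(S)$, Theorem \ref{th: critgen} guarantees a jumping point of $\bn_\ep$ in $[y-\nu\ep,\,y+\nu\ep]$. But for any $y\in I$ we have $[y-\nu\ep,\,y+\nu\ep]\subseteq [x_0-w\ep,\,x_0+w\ep]$, contradicting the hypothesis that this larger interval contains no jumping point of $\bn_\ep$.

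Next, I would exploit the assumption $w>2\nu$. This strict inequality gives $w-\nu>\nu$, so the sub-claim implies
\[
\dist(x_0,\eJ_S)\;>\;\nu\ep.
\]
Now Theorem \ref{thm: noisebound} applies directly: since $\ep<\hbar(S)\leq \ep_0(S)$ and $x_0$ sits at distance at least $\nu\ep$ from every jumping point of $\bn_S$, we conclude $\bn_\ep(x_0)=\bn(x_0)$ for $x_0$ $\ep$-generic. For $x_0\in\ep\bZ$ the statement is interpreted via the one-sided values, and the argument above applies verbatim on each side.

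There is no genuine obstacle here; the deep content has already been absorbed into Theorems \ref{thm: noisebound} and \ref{th: critgen}. The only thing requiring care is bookkeeping around the widths of the intervals and the open/closed conventions at endpoints, which is the reason the factor $2$ in $w>2\nu(S)$ appears: one copy of $\nu$ buys the passage from $\bn_\ep$-jumps to $\bn$-jumps (Theorem \ref{th: critgen}), and the other copy buys the passage back from the absence of $\bn$-jumps to agreement of $\bn_\ep$ with $\bn$ at $x_0$ (Theorem \ref{thm: noisebound}).
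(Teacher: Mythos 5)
Your proposal is correct and follows essentially the same route as the paper: both proofs combine Theorem \ref{thm: noisebound} and Theorem \ref{th: critgen}, using one copy of $\nu(S)$ to pass from absence of $\bn_\ep$-jumps to absence of $\bn$-jumps near $x_0$ and the other to conclude $\bn_\ep(x_0)=\bn(x_0)$. The only difference is organizational — you argue directly with an inner contradiction for the sub-claim, while the paper wraps the whole argument in a single contradiction — and this does not change the substance.
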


\begin{proof} Suppose that  the interval  $[x_0 - w \ep, x_0 + w \ep]$ contains no jumping  points of $\bn_\ep$, yet  $\bn_\ep(x_0) \neq \bn(x_0)$.  Then Theorem \ref{thm: noisebound} implies that the interval 
\[
[x_0 - \frac{w}{2} \ep, x_0 + \frac{w}{2} \ep]
\]
contains a critical $x_1$ point of $\bn$.  Then Theorem \ref{th: critgen} implies that $\bn_\ep$ has a jumping point on the interval 
\[
[x_1 - \frac{w}{2} \ep, x_1 + \frac{w}{2} \ep]
\]  
But since $x_1$ is at most $\frac{w}{2}$-pixels  from $x_0$, this interval is contained within $[x_0 - w \ep, x_0 + w \ep]$. This contradicts the assumption that the interval contained no  jumping points of $\bn_\ep$ and thus  $\bn_\ep(x_0) = \bn(x_0)$.
\end{proof}

Suppose that $\ep<\hbar(S)$, where $S$ is a generic $PL$ set.   Then the theorems proven in this section up to this point imply that all the jumping points  of $\bn_\ep$ are contained   a fixed numbers of pixels from  the jumping set of $\bn$. This simple observation, correctly implemented,  will be the key to  recovering the topology of $S$ from the topology of its sufficiently fine pixelations.

Consider the  discontinuities of the function $\bn_\ep$.  They can only occur within $\nu(S)$  columns from a jumping point of $\bn$.    We do not know what this integer is from the pixelation, but we know that it exists and it  is independent of $\ep$.  Therefore, we know that the  noise range $\nu(S)$  will eventually be less than a properly chosen spread $\sigma(\ep)$ such that $\si(\ep)\ra \infty$ as $\ep\ra 0$.  Using the spread to estimate $\nu(S)$ will be a dramatic overestimation for small $\ep$.  However since $\ep \sigma(\ep) \to 0$, if we declare any cycle which appears less than $\sigma(\ep)$ columns from a jumping point of $n_\ep(S)$ as a fake cycle, we will avoid declaring any real cycles as fake for small resolutions.

The discontinuities  $\bn_\ep(S)$ (for sufficiently small $\ep$)  are obviously contained in the set 
\[
\{x \in \bR  : [x - \ep \sigma(\ep), x + \ep \sigma(\ep)] \text{ contains a critical value of }n_\ep\}.
\]
We would like to consider this set to be the ``noise portion'' of $S$.  This would mean that we could approximate $S$ from $P_\ep(S)$ over $x$-values outside of this region using the results from section 3.  However, recall that in section 3 elementary sets were approximated by choosing upper and lower samples, which were chosen from the midpoints of pixels.  This means that to use the methods from section 3 to approximate $S$ we need our noise intervals to end at the middle of a pixel.  With that in mind we define the noise interval:
\begin{definition}
Let $S$ be a generic $PL$ set, and let $x_1, x_2, \cdots, x_N$ be the jumping points of $\bn_{S,\ep}$.  For each jumping point $x_j$ let the \emph{noise interval} $I_j(\ep)$ be the interval $[a_j, b_j]$ where 
\[
x_j \in [a_j, b_j], 
\]
\[
a_j(\ep), b_j(\ep)\in\frac{\ep}{2}+\ep\bZ
\]
\[
b_j \text{ is the smallest number in }\frac{\ep}{2}+\ep\bZ\text{ such that }(b_j - x_j) > 2 \ep \sigma(\ep)
\]
\[
a_j \text{ is the largest number in }\frac{\ep}{2}+\ep\bZ\text{ such that }(x_j - b_j) > 2 \ep \sigma(\ep)
\]
Define the set of noise intervals $\Delta_\ep$ as
\[
\Delta_\ep= I_1(\ep)\cup \cdots \cup I_N(\ep).
\]\qed
\end{definition}

From the definition we see that $\Delta_\ep$ is a union of intervals.  Furthermore for small $\ep$, each of these intervals will contain precisely one jumping point of $\bn_S$.   Thus $\Delta_\ep(S)$ has as many components as the cardinality of $\eJ_S$. In particular  this implies that  the measure of $\Delta_\ep$ is bounded from above  by $2|\eJ_S| \ep \sigma(\ep)$.  Since $\ep \sigma(\ep)$ vanishes as $\ep \to 0$, this implies that $\Delta_\ep$ has vanishing measure as $\ep \to 0$.

Consider the set of $x$-values which lie outside the noise.  It is a finite union of intervals such that for all $x$ which lie in these intervals, $\bn_\ep(x) = \bn(x)$.   The part of $S$ situated above a each interval is either empty, or a disjoint union of regions of simply types. These types of regions can be approximated using the methods described in Section \ref{s: 3}.  Therefore, to complete the approximation of $S$, we need only describe  how to  deal with  the noise intervals.

The key observation is that  the measure of $\Delta_\ep$  goes to zero as $\ep \to 0$ with the properly chosen spread.  This means that noise intervals make up a very small part of $S$, and so it will not be necessary to approximate them with as high of degree of accuracy as other parts of $S$.  Indeed, we only seek to ensure that the noise intervals capture the correct homotopy  type for small values of $\ep$.  

Since  $S$ is defined by a finite number of piecewise linear functions, for small enough $\ep$, the part of $S$ above $\Delta_\ep$ consists of contractible connected components.  Theorem \ref{thm: inc} implies that these components separate for small enough $\ep$.  Therefore the easiest way to get the correct topology within the noise intervals is to simply cover each component of the pixelation above a noise interval with a rectangle, destroying any fake cycles (or holes)  from $P_\ep(S)$ (see Figure \ref{fig: noise blocks}.)

\begin{figure}[ht]
\centering{\includegraphics[height=2in,width=2in]{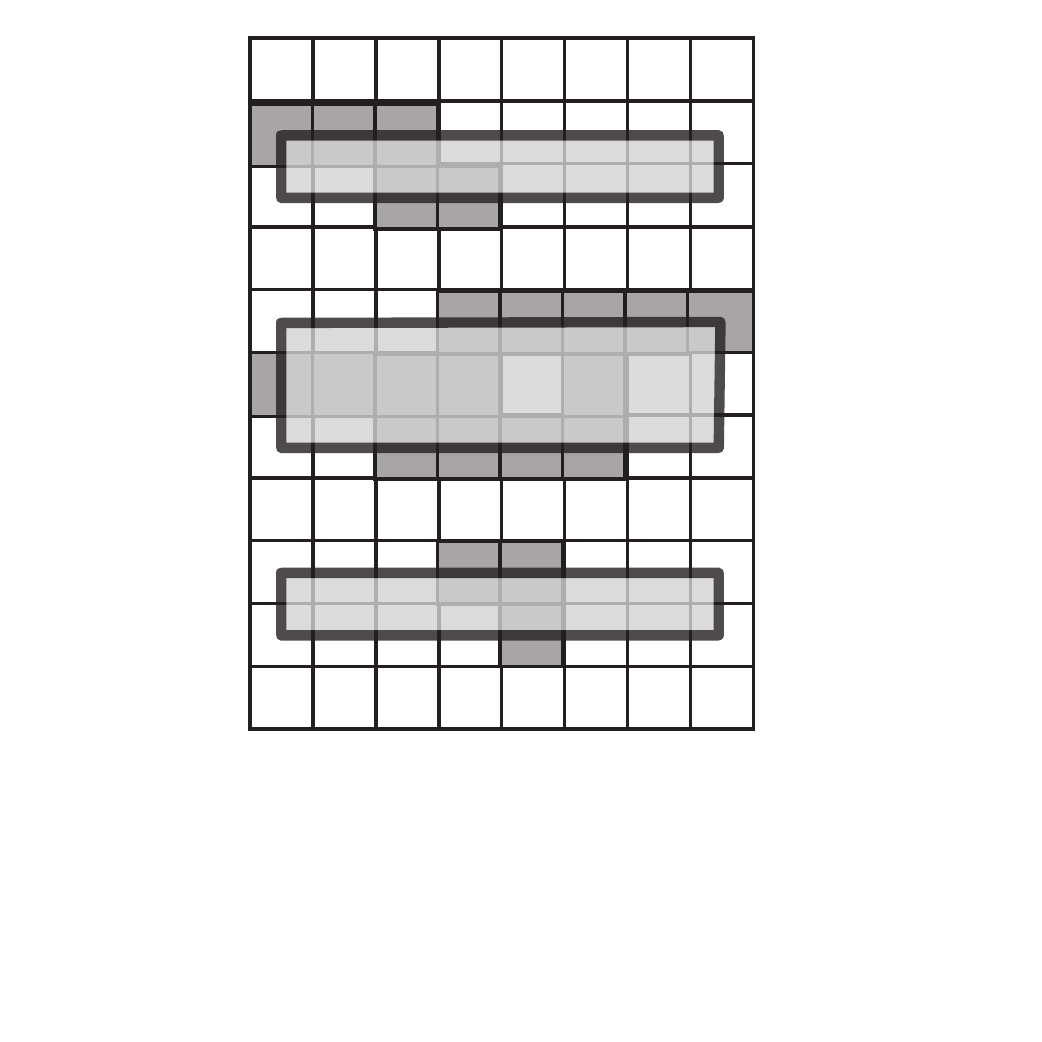}}
\caption{\sl The approximation of a noise interval.  The red rectangles indicate the rectangles that form the approximation.  Since the intersection of a noise interval and a regular interval always lie on the centers of a column, the edges of the rectangles will intersect approximations from the regular intervals.}  
\label{fig: noise blocks}
\end{figure}

 Suppose that  $S$ is a generic  $PL$ set with jump set
\[
\eJ_S=\{x_1< \cdots < x_N\}.
\]
We describe below  an  the algorithm to create a $PL$ approximation $S_\ep$v for $S$  using only information from $P_\ep(S)$ is as follows (this algorithm is restated in a more concrete way in Appendix \ref{s: d}).

\begin{algo}
\begin{enumerate}
\item Choose a spread $\sigma$ such that $\ep \sigma(\ep)^2 \to \infty$ and $\ep \sigma(\ep) \to 0$ as $\ep \to 0$.
\item Let $\Delta_\ep$ be the noise intervals of $P_\ep(S)$.
\item Define $\eR_\ep$ to be the closure of  $\bR\setminus \Delta_\ep$. We call $\eR_\ep$ the \emph{regular} set, and its intervals are called \emph{regular} intervals.
\item For the  bounded regular interval $I \in \eR_\ep$ do the following:
\begin{enumerate} 
\item For each connected component of $P_\ep(S) \cap (I\times \bR)$, the part of $P_\ep(S)$ over  the regular regular intervals $I$  choose compatible upper and lower profiles $\Pi_\ep^+$ and $\Pi_\ep^-$ on that component.
\item For each connected component, choose compatible upper and lower samples $\Xi_\ep^+$ and $\Xi_\ep^-$ with spread $\sigma$.
\item For each connected component, take the PL approximation of the upper and lower samples.
\item Add each PL approximation to the approximation of $S$.
\end{enumerate}

\item For each regular point $x$ denote by $S_\ep(x)$ the part of $S_\ep$ over $x$, i.e., 
\[
S_\ep(x):=S_\ep\cap \{x\}\times \bR.
\]
\item For a noise interval $I_j(\ep)$  we observe first  that Corollary \ref{cor: seperate} implies  that the number of connected components of $P_\ep(S)$ over $I_j(\ep)$ is equal   to the number of components of $S$ over $I_j(\ep)$. Denote by $\eC_j(\ep)$ the set of connected components of $ P_\ep(S)$ over $I_j(\ep)$ 
\begin{enumerate}
\item  For every  $C\in \eC_j(\ep)$ of  $P_\ep(S)$ we set
\[
 U_C:=\max\{ y;\;\;(x,y)\in C\,\bigr\},\;\; L_C:=\min\{ y;\;\;(x,y)\in C\,\bigr\}.
 \]
 \item  Let $P_U^a$ denote the pixel whose top boundary lies on the line $\{y = U_C\}$ and intersects the line $\{x = S_\ep(a_j(\ep))\}$.  Similarly let $P_U^b$ denote the pixel whose top boundary lies on the line $\{y = U_C\}$ and intersects the line $\{x = S_\ep(b_j(\ep))\}$, let $P_L^a$ denote the pixel whose bottom boundary lies on the line $\{y = L_C\}$ and intersects the line $\{x = S_\ep(a_j(\ep))\}$ and $P_L^b$ denote the pixel whose bottom boundary lies on the line $\{y = L_c\}$ and intersects the line $\{x = S_\ep(b_j(\ep))\}$.  (These pixels lie on the corners of a rectangle which bounds the majority of $C$, see Figure \ref{fig: noise blocks}.)
 \item Let $U_C^a$ indicate the center of $P_U^a$ and similarly let $U_C^b$, $U_L^a$ and $U_L^b$ indicate the centers of $P_U^b$, $P_L^a$ and $P_L^B$.
 \item Denote by  $\eP_C(\ep)$ the convex quadrilateral   with vertices
\[
\begin{split}
B_j^-(\ep)= (b_j(\ep), L_C^b),\;\; B_j^+(\ep):=(b_j(\ep), U_C^b),\\
A_j^-(\ep)= (a_j(\ep), L_C^a),\;\; A_j^+(\ep):=(a_j(\ep), U_C^a)
\end{split}
\]
\end{enumerate}
\item    To the set $S_\ep$ constructed  at (4) add the noise  union
\[
\eN_\ep:=\bigcup_{j=1}^N\bigcup_{C\in \eC_j(\ep)}\eP_C(\ep).
\]
\end{enumerate}

This final set $S_\ep$ will be piecewise linear by construction, and will be a good approximation of the set. \qed
\label{alg: process}
\end{algo}

\newpage

\section{The Main Result}
\label{s: 5}
\setcounter{equation}{0}
Consider a generic $PL$ subset $S$ of the Euclidean  plane $\bE=\bR^2$.  In section \ref{s: 4} we created Algorithm \ref{alg: process} which creates a $PL$ approximation $S_\ep$ of $S$.  We wish to state that this approximation converges in a good way to $S$.  An appropriate language to state this condition is that of normal cycles.  

For any  compact   $PL$  subset $ X\subset \bE$ we denote by $N^X$ its normal cycle, \cite{Fu2,LC}. For the reader's convenience we have included in Appendix \ref{s: c} a brief survey of the  basic properties of the normal cycle.  In particular Appendix \ref{s: c} demonstrates how to recover both the Euler characteristic and the perimeter of a set using only calculations on the normal cycle.  Using similar techniques we can extract other geometric and topological information about a set using only its normal cycle.  Therefore weak convergence in normal cycles implies convergence of a great deal of important information, and is an appropriate condition for ``good convergence.''

In the final theorem we see that the approximation $S_\ep$ created by Algorithm \ref{alg: process} does satisfy this type of convergence.

\begin{theorem}  Suppose  $S$ is a generic $PL$ subset of $\bE$,  $\si(\ep)$ is a spread function satisfying (\ref{eq: spi}) and $S_\ep$ is the $PL$ approximation  of $S$ constructed  via the Algorithm \ref{alg: process}. Then the normal cycle   $N^{S_\ep}$ of $S_\ep$ converges weakly to the normal cycle  $N^S$ of $S$.
\label{th: main}
\end{theorem}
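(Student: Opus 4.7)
The strategy is to apply Fu's General Convergence Theorem (Theorem \ref{thm: fu}) to the sequence $\{S_\ep\}_{\ep>0}$. Fu's theorem reduces weak convergence of normal cycles to three verifiable conditions: (i) all the sets $S_\ep$ are contained in some fixed compact subset of $\bE$; (ii) the masses $\bigl\| N^{S_\ep}\bigr\|$ are uniformly bounded; and (iii) for every closed half-plane $H\subset \bE$, one has $\chi(S_\ep\cap H)\to \chi(S\cap H)$ as $\ep\searrow 0$. The plan is to verify (i), (ii), (iii) in turn, with the bulk of the work concentrated in (iii).

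Conditions (i) and (ii) should follow with relatively little effort. For (i), the algorithm produces $S_\ep$ as a union of convex cells each lying inside a pixel that meets $S$; since $S$ is bounded, $S_\ep$ is contained in the $\ep$-tube $T^\infty_\ep(S)$, and these tubes are uniformly contained in any fixed neighborhood of $S$. For (ii), note that the mass of the normal cycle of a planar PL set equals the perimeter plus the total curvature of its boundary. The perimeter of $S_\ep$ stays bounded because on regular intervals $\partial S_\ep$ is a graph over an interval with controlled slope (via Proposition \ref{thm: difbound}), and the contribution from noise intervals goes to zero since the noise has vanishing measure. The curvature contribution is bounded thanks to Corollary \ref{cor: piece curv}, which gives convergence (hence boundedness) of the curvature on the regular strips, combined with the elementary observation that each noise rectangle contributes only $2\pi$ of curvature and there are uniformly many of them (the number of noise intervals equals $|\eJ_S|$ for $\ep$ small).

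Condition (iii) is the main obstacle. The key idea is to use the stack counter machinery from Section \ref{s: 4} together with a Mayer--Vietoris style decomposition of $S_\ep\cap H$ into pieces over regular intervals and pieces over noise intervals. First, I would handle half-planes of the form $H=\{x\le c\}$ (vertical cuts), where the proof should proceed by distinguishing whether the cut $c$ lies in a regular or a noise interval. For $c$ in a regular interval and $\ep<\hbar(S)$, Proposition \ref{pro: pix noise bound} gives $\bn_\ep(c)=\bn(c)$, and the Algorithm's construction makes the strip of $S_\ep$ over each regular subinterval homeomorphic to the corresponding strip of $S$; a stratified count of components and holes then yields $\chi(S_\ep\cap H)=\chi(S\cap H)$. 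For $c$ lying inside some noise interval $I_j(\ep)$, one shows that replacing $c$ by either endpoint of $I_j(\ep)$ only changes $\chi$ by a bounded amount (since $S\cap\overline{\bigcup I_j(\ep)}$ is a disjoint union of contractible PL pieces and the noise rectangles are contractible), which tends to zero in effect because the pieces stabilize as $\ep\searrow 0$.

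The genuinely delicate part of (iii) is handling non-vertical half-planes, since both $S$ and Algorithm \ref{alg: process} are built with respect to the $x$-axis, so an arbitrary oriented half-plane $H=\{\langle v,\cdot\rangle\le c\}$ intersects $S_\ep$ in a way that does not respect columns. The approach here is to exhibit a bijective correspondence between the connected components and the holes of $S_\ep\cap H$ and those of $S\cap H$ for $\ep$ small enough, depending on $H$. The separation corollary (Corollary \ref{cor: seperate}) applied not to $S$ itself but to $S\cap H$ and to its complement in a neighborhood lets us match connected components, while Theorem \ref{thm: inc} applied to the holes of $S\cap H$ (which are bounded components of its complement) matches holes. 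The noise boxes must be shown not to create or destroy cycles with respect to the cut $\partial H$; this uses that noise boxes are rectangles (hence convex) and hence any cut by a line gives at most two convex (contractible) pieces, contributing controllably to $\chi$. Combining these matching statements with the additivity of Euler characteristic over the finitely many resulting strata yields $\chi(S_\ep\cap H)\to \chi(S\cap H)$ for every $H$, and Fu's theorem then finishes the proof.
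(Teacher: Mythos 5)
Your high-level plan — reduce everything to Fu's Approximation Theorem (Theorem \ref{thm: fu}) — is the right instinct, and your conditions (i) and (ii) are handled essentially as the paper does. But there is a genuine gap in your treatment of condition (iii), and it sits exactly where the real difficulty of the theorem lives. First, you propose to match components and holes of $S_\ep\cap H$ with those of $S\cap H$ by invoking Corollary \ref{cor: seperate} and Theorem \ref{thm: inc} ``applied to $S\cap H$.'' Those results are statements about \emph{pixelations} $P_\ep(\cdot)$, and $S_\ep\cap H$ is not the pixelation of $S\cap H$ — it is the intersection of a half-plane with the output of Algorithm \ref{alg: process} — so neither result applies. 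Second, and more seriously, you locate the danger in the noise rectangles, but the actual danger is the sawtooth boundary of $S_\ep$ over the \emph{regular} intervals: the edges of $S_\ep$ there have slopes only approximately equal to the slopes of $S$, so a line $\{\xi=c\}$ that is nearly parallel to an edge of $S$ could a priori cross the approximating polyline many times, creating spurious components or holes in $S_\ep\cap H$ that have no counterpart in $S\cap H$. Ruling this out requires a monotonicity statement — that for $\ep$ small (depending on $\xi$ and the slopes of $S$), the restriction of $\xi$ to the graph of each approximating polyline is strictly monotone, hence each such line meets it at most once. This is precisely the paper's Lemma \ref{lem: tech}, proved via Theorem \ref{thm: profileApprox} and an integration of $\xi_1+\xi_2 f_\ep'$; your proposal contains no substitute for it. Finally, your closing appeal to ``additivity of Euler characteristic over the finitely many resulting strata'' is not free: $\chi$ is only inclusion--exclusion additive, and carrying out that bookkeeping for the whole of $S_\ep\cap H$ is essentially as hard as the decomposition you are trying to avoid.

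For contrast, the paper never verifies Fu's condition (3) for the full set $S_\ep$. It instead decomposes $S_\ep$ along the Reeb graph of the projection $(x,y)\mapsto x$ into noise pieces, regular pieces, and their pairwise intersections, writes $N^{S_\ep}$ via the inclusion--exclusion property of normal cycles (equation (\ref{eq: key-normal})), and proves weak convergence term by term. Most pieces are convex polygons converging in the Hausdorff metric, for which a short lemma (Lemma \ref{lem: quad}) settles everything; the only non-convex pieces are the trapezoid approximations $R_{x_i,\ep}$ over single edges of $S$, and for those Fu's three conditions are checked directly, with the Euler characteristic condition reduced to showing $R_{x_i,\ep}\cap\{\xi\geq c\}$ is empty or contractible (Lemmas \ref{lem: tech} and \ref{lemma: euler}). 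If you want to salvage a direct application of Fu's theorem to $S_\ep$ itself, you would still need to prove Lemma \ref{lem: tech} (or an equivalent) and then perform the inclusion--exclusion count of $\chi(S_\ep\cap H)$ piece by piece — at which point you have reconstructed the paper's argument.
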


\begin{proof} Let us first outline the strategy.   Recall that $\eV_S$ indicates the set of vertices  of $S$  and denote by $\eX_S$   its projection on the $x$-axis.   Since $S$ is generic the projection $\bE\ni (x,y)\mapsto x\in\bR$ induces a bijection $\eV_S\ra \eX_S$.     The jump set $\eJ_S$ is contained in $\eX_S$. We will refer to  the vertices that project  in $\eJ_S$ as \emph{essential} vertices. The other vertices  will be called \emph{inessential}.

For every $c\in \eJ_S$  and any $\ep>0$  small we denote by $I_\ep(c)$ the noise interval that contains $c$.  We set $I_0(c):=\{c\}$. Fix $\ep_0>0$ such that,
\[
\dist\bigl(\,\Delta_\ep(c),\Delta_\ep(c')\,\bigr)\leq \frac{1}{4}\dist(c,c'),\;\;\forall c,c'\in \eJ_S,\;\;0\leq \ve\leq \ep_0.
\]
For $c\in \eJ_S$  and any $0\leq \ve\leq \ep_0$ we denote by $\eN_\ep(c)$ the  noise strip
\[
\eN_\ep(c):=\bigl\{\, (x,y)\in\bR^2;\;\;x\in I_\ep(c)\,\bigr\}.
\]
Note that $\eN_0(c)$ is the vertical line  $\{x=c\}$. Finally we set
\[
\eN_\ep=\bigcup_{c\in \eJ_S} \eN_\ep(c), \;\;\eR_\ep:=\bR^2\setminus \eN_\ep,\;\;\forall 0\leq \ep\leq\ep_0.
\]
For uniformity, we set $S_0:=S$. 

Observe that  there exists $0<\ep_1<\ep_0$ such that, if $\ep\in [0,\ep_1]$, the following hold.

\begin{itemize}

\item  Any  component of $\eN_0\cap S_0$ is contained  in a unique component   of $\eN_\ep\cap S_\ep$.  

\item If  $C$ is  a connected component   of $\eR_\ep\cap S_\ep$,   then  the closure  of $C$  intersects \emph{exactly two} connected components of $\eN_\ep\cap S_\ep$. 

\end{itemize}

For any $\ep\in [0,\ep_1]$ we construct a graph $\Gamma_\ep$ as follows.   The vertex set $\eV_\ep$ of $\Gamma_\ep$ consists  of the connected components of  $\eN_\ep\cap S_\ep$.    The edges  are the connected  components of $\eR_\ep\cap S_\ep$.  We have a well defined  map
\[
\Psi_\ep:  \eV_0 \ra  \eV_\ep,
\]
 that associates to the  a component $C$ of $\eN_0\cap S$  the unique component of $S_\ep\cap\eN_\ep$  that contains $C$. which This is easily seen to be    a bijection. Moreover, it induces an  isomorphism of graphs, i.e.,  the vertices $\bv,\bv'$ are adjacent in $\Gamma_0$ if and only if  the vertices  $\Psi_\ep(\bv)$ and $\Psi_\ep(\bv')$ are adjacent  in $\Gamma_\ep$.  For any $\bv\in\eV_0$ we denote by $E_\bv$ the set of edges of $\Gamma_0$ that are adjacent to $\bv$.

\begin{remark} The  graph  $\Gamma_0$  is known in the literature  as the \emph{Reeb graph} of the (stratified) Morse function
\[
S\ni (x,y)\mapsto x\in\bR.
\]
For more information about this concept we refer to \cite[VI.4]{EH}, or the original source \cite{Reeb1}.\qed
\end{remark}

For any  vertex $\bv\in \eV_0$  and $\ve\in [0,\ep_1]$ we denote by $C_{\bv,\ep}$ the component  of $\eN_\ep\cap S_\ep$  corresponding to $\Psi_\ep(v)$.   Similarly, for any edge $\bse=[\bv,\bv']$ of $\Gamma_0$  and any $\ep\in [0,\ep_1]$ we denote by $C_{\bse,\ep}$ the \emph{closure} of the component of $\eR_\ep\cap S_\ep$ corresponding to  the edge $[\Psi_\ep(\bv),\Psi_\ep(\bv')]$.

\begin{lemma}  For any $\ve\in [0,\ep_1]$ we  have
\begin{equation}
N^{S_\ep} =\sum_{\bv\in \eV_0} N^{C_{\bv,\ep}}+\sum_{\bse\in \eE_0} N^{C_{\bse,\ep}}-\sum_{\bv \in \eV_0}\sum_{\bse\in E_\bv} N^{C_{\bv,\ep}\cap C_{\bse,\ep}}.
\label{eq: key-normal}
\end{equation}
\end{lemma}

\begin{proof}   The proof use the inclusion-exclusion principle, i.e., the equality 
\[
N^{X\cup Y}=N^X+N^Y-N^{X\cap Y}
\]
for any compact $PL$ subsets  $X,Y\subset \bR^2$. 

Note that we have  a  decomposition
\begin{equation}
S_\ep=\left(\bigcup_{\bv\in\eV_0} C_{\bv, \ep}\right)\cup \left(\bigcup_{\bse\in \eE_0} C_{\bse,\ep}\right).
\label{eq:  decomp}
\end{equation}
We need to discuss separately the cases $\ve>0$ and $\ve=0$.

\smallskip

\noindent {\bf 1.} Assume that $\ep\in (0,\ep_1]$. In this case  we have
\begin{equation}
C_{\bv,\ep}\cap C_{\bv',\ep}=\emptyset=C_{\bse,\ep}\cap C_{\bse',\ep},\;\;\forall \bv\neq \bv',\;\;\bse\neq\bse'.
\label{eq: disj1}
\end{equation}
The equality (\ref{eq: key-normal})  now follows from inclusion-exclusion principle  applied to the decomposition (\ref{eq: decomp})   satisfying the overlap conditions (\ref{eq: disj1}).

\smallskip

\noindent {\bf 2.}  Assume that $\ep=0$. In this case the overlap conditions are more complicated.  We have
\begin{subequations}
\begin{equation}
C_{\bv,0}\cap C_{\bv',0}=\emptyset,\;\;\forall \bv\neq\bv',
\label{eq: disj2a}
\end{equation}
\begin{equation}
C_{\bse,0}\cap C_{\bse',0}=\emptyset\Llra \bse\cap\bse'=\emptyset,
\label{eq: disj2b}
\end{equation}
\end{subequations}
where the condition $\bse\cap\bse'=\emptyset$ signifies that the edges $\bse$ and $\bse'$ have no vertex in common. Moreover, 
\begin{equation}
\bigcap_{\bse\in A} C_{\bse, 0} = C_{\bv,0},\;\;\forall \bv\in\eV_0,\;\; A\subset E_\bv.
\label{eq: disj3}
\end{equation}
Using (\ref{eq: decomp}), (\ref{eq: disj2a}),  (\ref{eq: disj2b}),  (\ref{eq: disj3}) and the inclusion-exclusion principle we deduce
\[
\begin{split}
N^S = &\sum_{\bv\in \eV_0} N^{C_{\bv,0}}+\sum_{\bse\in \eE_0} N^{C_{\bse,0}}-\sum_{\bv \in \eV_0}\sum_{\bse\in E_\bv} N^{C_{\bv,0}\cap C_{\bse,0}}\\
&+ \sum_{\bv\in \eV_0} \sum_{\emptyset\neq A\subset E_\bv} (-1)^{|A|+1} N^{C_{\bv}\cap (\bigcap_{\bse\in A}) C_{\bse,0}} + \sum_{\bv\in \eV_0} \sum_{\emptyset\neq  A\subset E_\bv} (-1)^{|A|} N^{\bigcap_{\bse\in A} C_{\bse,0}}
\end{split}
\]
\[
\begin{split}
&=\sum_{\bv\in \eV_0} N^{C_{\bv,0}}+\sum_{\bse\in \eE_0} N^{C_{\bse,0}}-\sum_{\bv \in \eV_0}\sum_{\bse\in E_\bv} N^{C_{\bv,0}\cap C_{\bse,0}}\\
&+\sum_{\bv\in \eV_0}\, \underbrace{\left( \sum_{\emptyset\neq  A\subset E_\bv} \bigl(\,(-1)^{|A|+1}+  (-1)^{|A|}\,\bigr)\,\right)}_{=0}\,N^{C_{\bv,0}}
\end{split}
\]
\end{proof}
Theorem \ref{th: main} is now an immediate consequence of the  following result.

\begin{lemma}  For any $\bv \in \eV_0$, $\bse\in \eE_0$ we have
\begin{subequations}
\begin{equation}
\lim_{\ep\searrow 0}  N^{C_{\bv,\ep}}=  N^{C_{\bv,0}},
\label{eq: lima}
\end{equation}
\begin{equation}
\lim_{\ep\searrow 0}  N^{C_{\bse,\ep}}= N^{C_{\bse,0}},
\label{eq: limb}
\end{equation}
\begin{equation}
\lim_{\ep\searrow 0}N^{C_{\bse,\ep}\cap C_{\bv,\ep}}= N^{C_{\bse,0}\cap C_{\bv,0}},
\label{eq: limc}
\end{equation}
\end{subequations}
where the limits are understood in the sense of weak topology on the space of currents.
\label{lemma: lim}
\end{lemma}

The proof of this lemma relies on the General Convergence Theorem proved by Joseph Fu in \cite{Fu1}. 

\begin{theorem}[Approximation Theorem] Suppose $S$ is a PL subset of the plane and for each $\ep \in [0,\ep_1]$ $S_\ep$ is a PL subset of the plane with the following properties.

\begin{enumerate}
\item There is a compact set $K \subset \bR^2$ which contains each $S_\ep$.
\item There is a $M \in \bR$  such that
\[
{\rm mass}(N^{S_\ep})\leq M,\;\;\forall \ep.
\]
\item For almost every $\xi \in \Hom(\bR^2,\bR)$ and almost every $c \in \bR$ we have
\[
\lim_{\ep \searrow 0} \chi(S_\ep \cap \{\xi \geq c\}) = \chi(S \cap \{\xi \geq c\})
\]
\end{enumerate}
Then $N^{S_\ep}$  converges to $N^S$ as $\ep\ra 0$ weakly and in the flat norm.\qed
\label{thm: fu}
\end{theorem}

Proving  (\ref{eq: lima})-(\ref{eq: limc}) will involve proving each of the three conditions  in the Approximation Theorem.    We begin with an easy   consequence of the Approximation Theorem that will   be   very useful in the sequel.    First, let us define a \emph{convex polygon} in the plane to be the convex hull of a finite set.   Note that this definition allows for degenerate polygons such as line segments or points. We define the perimeter of a segment to be twice its length. For $2$-dimensional polygons the perimeter is defined in the usual way. We will denote by $L(P)$ the perimeter of a polygon.

\begin{lemma}
Suppose $(S_\ep)_{\ep>0}$ is a family of convex polygons in the plane that converge in the Hausdorff  metric to a convex polygon $S$. Then  $N^{S_\ep}$ converges weakly to $N^S$ as $\ep\ra 0$.
\label{lem: quad}
\end{lemma}

\begin{proof}
We argue by proving the conditions of Fu's Theorem.   Observe first that   there exists $R>0$ such that
\[
\dist(S_\ep, S)<R,\;\;\forall \ep
\]
 and thus  the condition (1) of the Approximation Theorem.   The  computations of \cite[Chap. 23]{Mor}  show that mass of the normal cycle of a convex polygon $P$ is  equal to $2\pi +L(P)$.    From  Hadwiger's characterization theorem \cite[Thm. 9.1.1]{KR}  we deduce that
 \[
 \lim_{\ep\ra 0} L(S_\ep)=L(S)
 \]
 and  thus condition (2) is also satisfied.

Therefore we must show that for almost every $\xi \in \Hom(\bR^2,\bR)$ and almost every $c \in \bR$ we have
\[
\lim_{\ep \searrow 0} \chi(S_\ep \cap \{\xi \le c\}) = \chi(S \cap \{\xi \le c \})
\]
Note that $S$ and each $S_\ep$ are all convex subsets of the plane.  Therefore any intersection with a half-plane will either be empty or be a contractible set.  Therefore to prove the convergence of Euler characteristic on half-planes we need only prove that a half plane  $H$ will only intersect $S_\ep$ for small $\ep$ if and only if it intersects $S$.    This is true since $H\cap S_\ep$ converges in the Hausdorff metric to $H\cap S$.
\end{proof}

In several places in the remainder of the proof of Lemma \ref{lemma: lim} it will be convenient to discuss the maximum rate at which $S$ can increase or decrease.  With that in mind we set:
\[
\alpha = \sup_{\beta \text{ is a slope of an edge of }S} |\beta|
\]

\smallskip

\noindent \textbf{Proof of (\ref{eq: lima}).} Note that each $C_{\bv,0}$ is a connected component of a subset of a line.  Therefore each $C_{\bv,0}$ is either a point or a line segment.  By construction of the approximation $S_\ep$, each $C_{\bv,\ep}$ is a rectangle which contains $C_{\bv,0}$.  Therefore to show that $C_{\bv,\ep}$ converges to $C_{\bv,0}$ in the Hausdorff metric we need only show that its width vanishes and its height converges to the height of $C_{\bv,0}$.  By construction the width of $C_{\bv,\ep}$ is $O(\ep \sigma(\ep))$, and since $\sigma$ is a spread this width vanishes as $\ep \to 0$.  Furthermore Proposition \ref{thm: difbound} implies that the difference in the height of $C_{\bv,\ep}$ and $C_{\bv,0}$ is $O(\alpha \ep + \alpha \ep \sigma(\ep))$, where both $\alpha \ep$ and $\alpha \ep \sigma(\ep)$ vanish as $\ep \to 0$.  Therefore $C_{\bv,\ep}$ converge to $C_{\bv,0}$ in the Hausdorff metric, and Lemma \ref{lem: quad} implies that
\[
\lim_{\ep \searrow 0} N^{C_{\bv,\ep}} = N^{C_{\bv,0}}
\]

\smallskip

\noindent \textbf{Proof of (\ref{eq: limc}).} The set $C_{\bv,\ep} \cap C_{\bse, \ep}$ is a line segment on the edge of the rectangle $C_{\bv,\ep}$.  We have already proven that $C_{\bv,\ep}$ converges to $C_{\bv,0}$ in the Hausdorff metric.  Since 
\[
C_{\bv,0}\cap C_{\bse,0} = C_{\bv,0}
\]
to prove that $C_{\bv,\ep} \cap C_{\bse, \ep}$ converges to $C_{\bv, 0} \cap C_{\bse,0}$ in the Hausdorff metric we need only prove that
\[
\lim_{\ep \searrow 0} \sup_{x \in C_{\bv, 0}} \inf_{y \in C_{\bse,\ep} \cap C_{\bv,\ep}} \dist(x,y) = 0.
\]
That is to say, we must show that the maximum distance from a point in $C_{\bv,0}$ to the set $C_{\bse,\ep} \cap C_{\bv,\ep}$ becomes arbitrarily small as $\ep \to 0$.  But note that $C_{\bse,\ep} \cap C_{\bv,\ep}$ is an interval which lies on the edge of a noise interval.  Its distance to a point of $C_{\bv,0}$ in $x$-coordinates is at most the width of the noise interval.  Its distance to a point of $C_{\bv,0}$ in $y$-coordinates is equal to the change of slope of $S$ over the noise interval, together with the error from $S$ to the approximation $S_\ep$.  Thus (once again using the fact that a noise interval has width proportional to $\ep \sigma(\ep)$ together with Proposition \ref{thm: difbound}) we see that
\[
\sup_{x \in C_{\bv, 0}} \inf_{y \in C_{\bse,\ep} \cap C_{\bv,\ep}} \dist(x,y) = O(\ep \sigma(\ep) + \alpha \ep)
\]
which implies that the $C_{\bse,\ep} \cap C_{\bv, \ep}$ converges to $C_{\bv,0} = C_{\bse,0}\cap C_{\bv,0}$ in the Hausdorff metric.  So Lemma \ref{lem: quad} implies that
\[
\lim_{\ep \searrow 0} N^{C_{\bse,\ep} \cap C_{\bv,\ep}} = N^{C_{\bse,0}\cap C_{\bv,0}}
\]

\smallskip

\noindent \textbf{Proof of (\ref{eq: limb}:)} To prove this limit we make further use the inclusion-exclusion principle.




Fix an edge $\bse$.  Let the set $\eX_{\bse,0}$ indicate the projection of vertices of $C_{\bse,0}$ to the $x$-axis (i.e. the subset of $\eX_S$ which contains only projections of vertices in $C_{\bse,0}$).  Write
\[
\eX_{\bse,0} = \bigl\{\,x_0,x_1,\dots,x_n\,\bigr\},
\]
where $x_0,x_1,\dots,x_n$ are arranged in increasing order.  For each integer $1 < i < n$ we define the set
\[
V_{x_i,0} := C_{\bse,0} \cap \{x = x_i\} 
\]
and for each integer $1 < i \le n$ we define the set
\[
R_{x_i,0} := C_{\bse,0} \cap \{x_{i-1} \le x \le x_i\}.
\]
Note that $R_{x_i,0} \cap R_{x_{i+1},0} = V_{x_i,0}$.

Let $\eX_{\bse,\ep}$ be the projection of the vertices of $C_{\bse,\ep}$ to the $x$-axis.  Note that by construction of $S_\ep$ each point in $\eX_{\bse,\ep}$ will have exactly two vertices of $C_{\bse,\ep}$ map onto it.  For each $\ep$ write
\[
\eX_{\bse,\ep} = \bigl\{\,x_{0,\ep},x_{1,\ep}, \dots, x_{m(\ep),\ep}\,\bigr\}
\] 
arranged in increasing order and where $m(\ep)$ is an integer depending on $\ep$.  
\[
V_{x_i,\ep} = C_{\bse,\ep} \cap \bigl\{\,x_{j_{i,\ep},\ep} \le x \le x_{k_{i,\ep},\ep}\bigr\}.
\]
Where $j_{i,\ep}$ is defined to be the largest integer such that $x_{j_{i,\ep},\ep} < x_i$ and conversely $k_{i,\ep}$ is defined to the smallest integer such that $x_{k_{i,\ep},\ep} > x_i$.  The set $V_{x_i,\ep}$ can be thought of as the part of $C_{\bse,\ep}$ between the line segments which cross over $x_i$ (see Figure \ref{fig: vep}).  Note that if the critical value $x_i$  belongs to $\eX_{\bse,\ep}$, then  the set $V_{x_i,\ep}$ will be a hexagon, otherwise it will be a quadrilateral.

\begin{figure}[h]
\centering{\includegraphics[height=2.25in,width=2.25in]{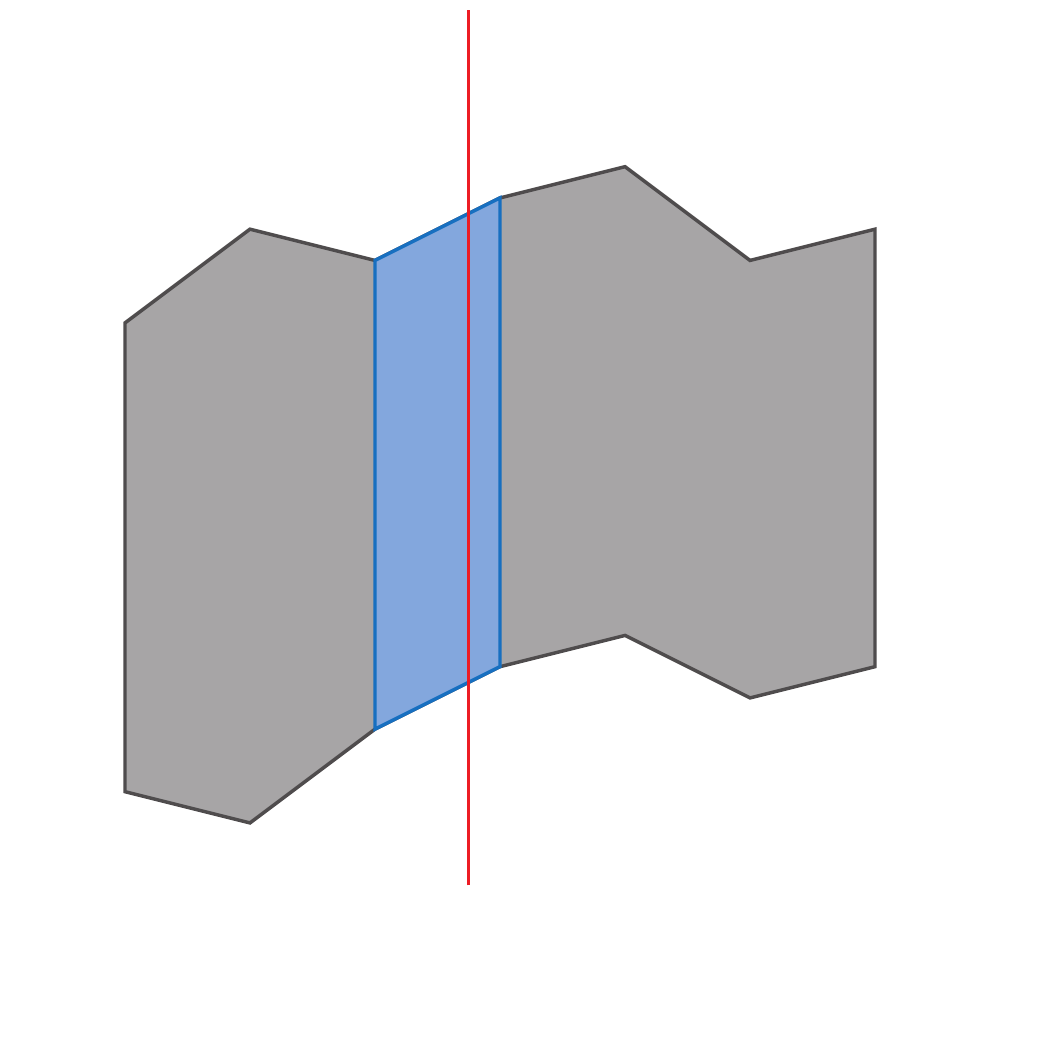}}
\caption{\sl In this picture the grey region is $C_{\bse,\ep}$, the red line is $\{x = x_i\}$ and the blue region is $V_{x_i,\ep}$.}
\label{fig: vep}
\end{figure}

Similarly,  for each $\ep > 0$ and each integer $1 < i \le n$ define the set
\[
R_{x_i,\ep} =: C_{\bse,\ep} \cap \{x_{k_{(i-1),\ep},\ep} \le x \le x_{j_{i,\ep},\ep}\}
\]
where $j_{i,\ep}$ and $k_{i,\ep}$ are defined as before.  Then the set $R_{x_i,\ep}$ is the part of $C_{\bse,\ep}$ which lies between $V_{x_{i-1},\ep}$ and $V_{x_i,\ep}$.

\begin{figure}[h]
\centering{\includegraphics[height=2.25in,width=2.25in]{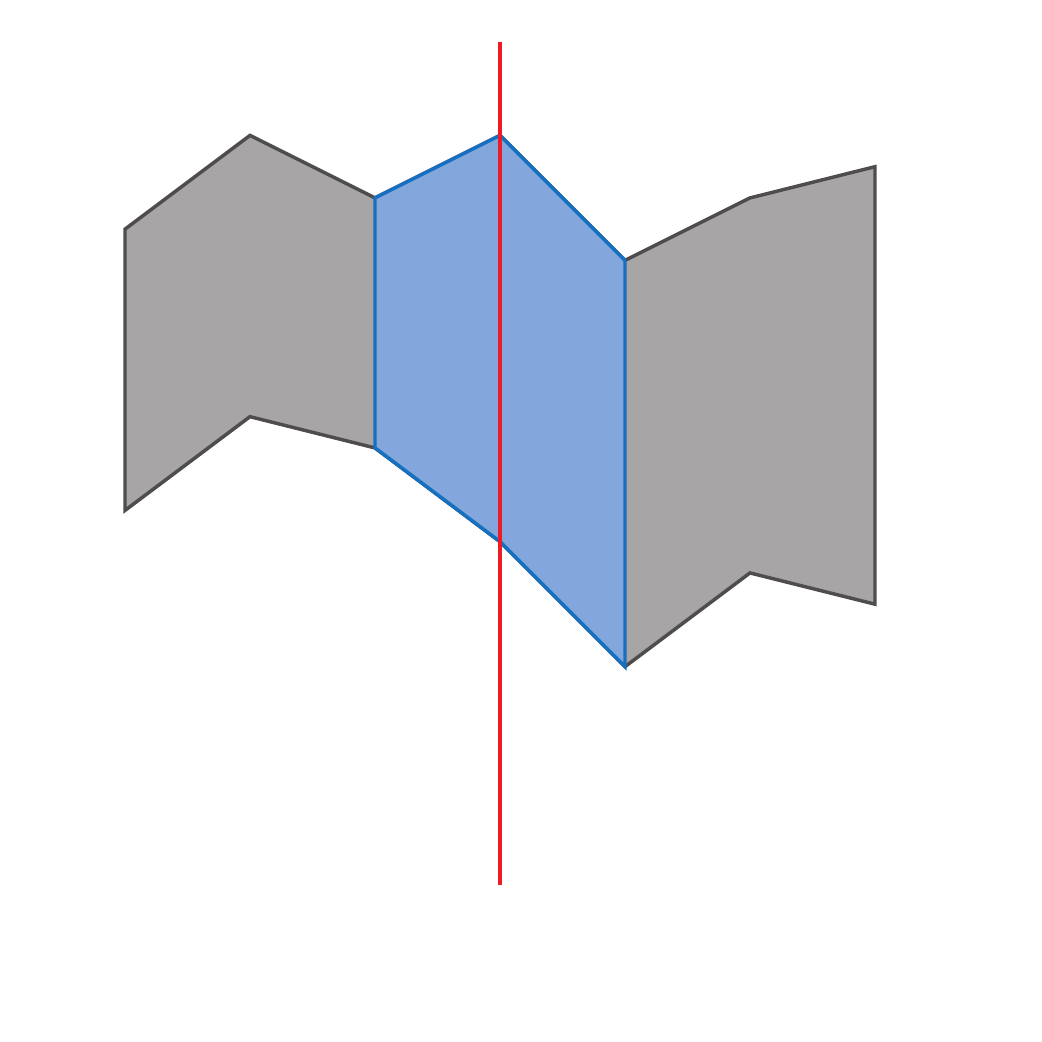}}
\caption{\sl The region $V_{x_i,\ep}$ may be a hexagon if the line $\{x = x_i\}$ occurs at a vertex of $C_{\bse,\ep}$.}
\label{fig: vep2}
\end{figure}

Dividing $C_{\bse,0}$ among the sets $R_{x_i,0}$ and recalling that these sets overlap on $V_{x_i,0}$ we see that
\begin{equation}
N^{C_{\bse,0}} = \sum_{i=1}^n N^{R_{x_i,0}} - \sum_{i=1}^{n-1} N^{V_{x_i,0}}
\label{eq: zeronorm}
\end{equation}

Dividing $C_{\bse,\ep}$ among the sets $R_{x_i,\ep}$ and the sets $V_{x_i,\ep}$ we see that
\begin{equation}
N^{C_{\bse,\ep}} = \sum_{i=1}^n N^{R_{x_i,\ep}} + \sum_{i=1}^{n-1} N^{V_{x_i,\ep}} - \sum_{i=1}^{n-1} N^{R_{x_i,\ep} \cap V_{x_i,\ep}} - \sum_{i=1}^{n-1} N^{V_{x_i,\ep} \cap R_{x_{i+1},\ep}}
\label{eq: epnorm}
\end{equation}
Although (\ref{eq: epnorm}) looks considerably more complicated than (\ref{eq: zeronorm}) note that (\ref{eq: epnorm}) will converge to (\ref{eq: zeronorm}) if the following three equations are true:
\begin{subequations}
\begin{equation}
\lim_{\ep \searrow 0} N^{R_{x_i,\ep}} = N^{R_{x_i,0}}
\label{eq: rconv}
\end{equation}
\begin{equation}
\lim_{\ep \searrow 0} N^{V_{x_i, \ep}} = N^{V_{x_i,0}}
\label{eq: vconvmid}
\end{equation}
\begin{equation}
\lim_{\ep \searrow 0} N^{R_{x_i,\ep} \cap V_{x_i,\ep}} = \lim_{\ep \searrow 0} N^{V_{x_i,\ep} \cap R_{x_{i+1},\ep}}=N^{V_{x_i,0}}
\label{eq: vconvint}
\end{equation}
\end{subequations}

We will prove these equations in reverse order.

\medskip

\noindent \textbf{Proof of (\ref{eq: vconvint}).} We start by proving
\[
\lim_{\ep \searrow 0} N^{R_{x_i,\ep} \cap V_{x_i,\ep}} = N^{V_{x_i,0}}
\]
Note that the intersection of $R_{x_i,\ep}$ and $V_{x_i,\ep}$ is a line segment.  The set $V_{x_i,0}$ is also a line segment, so to prove the convergence of normal cycles we can use Lemma \ref{lem: quad}.  For sufficiently small $\ep$ we can assume that no vertex of $S$ occurs in between $R_{x_i,\ep} \cap V_{x_i,\ep}$ and $V_{x_i,0}$.  Then the top vertex of $R_{x_i,\ep} \cap V_{x_i,\ep}$ will differ in $x$-coordinate from the top vertex of $V_{x_i,0}$ by at most the width of a line segment in $C_{\bse,\ep}$ and differ in $y$-coordinate by a number proportional to the width of the line segment.  This means that the distance from the top vertex of $R_{x_i,\ep} \cap V_{x_i,\ep}$ to the top vertex of $V_{x_i,0}$ is $O(\ep \sigma(\ep))$.  The same is true of the bottom vertices.  Therefore Lemma \ref{lem: quad} implies that
\[
\lim_{\ep \searrow 0} N^{R_{x_i,\ep} \cap V_{x_i,\ep}} =N^{V_{x_i,0}}.
\]
The above reasoning can be repeated to show that
\[
\lim_{\ep \searrow 0} N^{V_{x_i,\ep} \cap R_{x_{i+1},\ep}} =N^{V_{x_i,0}}
\]
which completes the proof of (\ref{eq: vconvint}).

\smallskip

\noindent \textbf{Proof of (\ref{eq: vconvmid}).} We once again make use of the  inclusion-exclusion principle satisfied by the  Normal Cycle.  Divide the set $V_{x_i,\ep}$ into the following two sets:
\[
V_{x_i,\ep}^0 := ((\infty, x_i] \times \bR) \cap V_{x_i,\ep}, \;\;V_{x_i,\ep}^1 := ([x_i,\infty) \times \bR) \cap V_{x_i,\ep}.
\]
Then we have
\[
N^{V_{x_i,\ep}} = N^{V_{x_i,\ep}^0} + N^{V_{x_i,\ep}^1} - N^{V_{x_i,\ep}^0 \cap V_{x_i,\ep}^1}
\]
so that (\ref{eq: vconvmid}) will hold if $N^{V_{x_i,\ep}^0}$, $N^{V_{x_i,\ep}^1}$ and $N^{V_{x_i,\ep}^0 \cap V_{x_i,\ep}^1}$ all converge to $N^{V_{x_i,0}}$.  

Note that the set $V_{x_i,\ep}^0 \cap V_{x_i,\ep}^1$ is a line segment.  If the top and bottom vertex of $V_{x_i,\ep}^0 \cap V_{x_i,\ep}^1$ are sample points of the approximation, then Proposition \ref{thm: difbound} implies that the Hausdorff distance from $V_{x_i,\ep}^0 \cap V_{x_i,\ep}^1$ to $V_{x_i,0}$ is $O(\alpha \ep)$.  If the top and bottom vertices of the line segment are not sample points of the approximation, note that they differ in width from a sample point by at most $\ep \sigma(\ep)$.  Therefore in all cases the Hausdorff distance from $V_{x_i,\ep}^0 \cap V_{x_i,\ep}^1$ to $V_{x_i,0}$ is $O(\alpha \ep + \alpha \ep \sigma(\ep))$.  This vanishes as $\ep \to 0$, so that Lemma \ref{lem: quad} implies that
\[
\lim_{\ep \searrow 0} N^{V_{x_i,\ep}^0 \cap V_{x_i,\ep}^1} = N^{V_{x_i,0}}
\]

Note that each set $V_{x_i,\ep}^0$ and $V_{x_i,\ep}^1$ must be a convex quadrilateral.  The width of their union $V_{x_i,\ep}$ is $O(\ep \sigma(\ep))$ by construction.  This implies that the widths of each of $V_{x_i,\ep}^0$ and $V_{x_i,\ep}^1$ vanish as $\ep \to 0$.  Therefore each set converges to $V_{x_i,\ep}^0 \cap V_{x_i,\ep}^1$ in Hausdorff distance as $\ep \to 0$.  This implies
\[
\lim_{\ep \searrow 0} N^{V_{x_i,\ep}^0} = \lim_{\ep \searrow 0} N^{V_{x_i,\ep}^1} = N^{V_{x_i,0}}
\]
completing the proof of (\ref{eq: vconvmid}).

\smallskip

\noindent \textbf{Proof of (\ref{eq: rconv}).} The major difficulty in this case is that the regions $R_{x_i,\ep}$ need not be convex polygons so we cannot apply Lemma \ref{lem: quad}.  Thus we must return to proving the three conditions of Theorem \ref{thm: fu} directly.

Note that each $R_{x_i,\ep}$ can be considered the PL approximation with spread $\sigma(\ep)$ of the region $R_{x_i,0}$.  The region $R_{x_i,0}$ is simply an elementary set which lies between two line segments.  Theorem \ref{thm: profileApprox} implies that the distance of the boundary of $R_{x_i,\ep}$ to the boundary of $R_{x_i,0}$ is $O(\ep + \ep \sigma(\ep) + (\ep \sigma(\ep))^2)$.  Therefore the sets $R_{x_i,\ep}$ must all be contained in some compact subset of $\bR^2$.

Since $R_{x_i,\ep}$ is the PL approximation of an elementary set with spread $\sigma(\ep)$, Corollary \ref{cor: piece curv} implies that the total curvature of its boundary converges to the total curvature of the boundary of $R_{x_i,0}$.  The computations of \cite[Chap. 23]{Mor} show that the mass of a normal cycle of a polygon is equal to the perimeter of the polygon plus the total curvature of its boundary.  Therefore the mass of $N^{R_{x_i,\ep}}$ cannot increase to infinity.

The final condition to prove from Theorem \ref{thm: fu} is that the Euler characteristic of intersection of $R_{x_i,\ep}$ with any generic half-plane converges   to the Euler characteristic of the intersection of $R_{x_i,0}$ with the same half-plane.  To prove this we will need a technical lemma whose proof we will defer until later.

\begin{figure}[h]
\centering{\includegraphics[height=2.25in,width=2.25in]{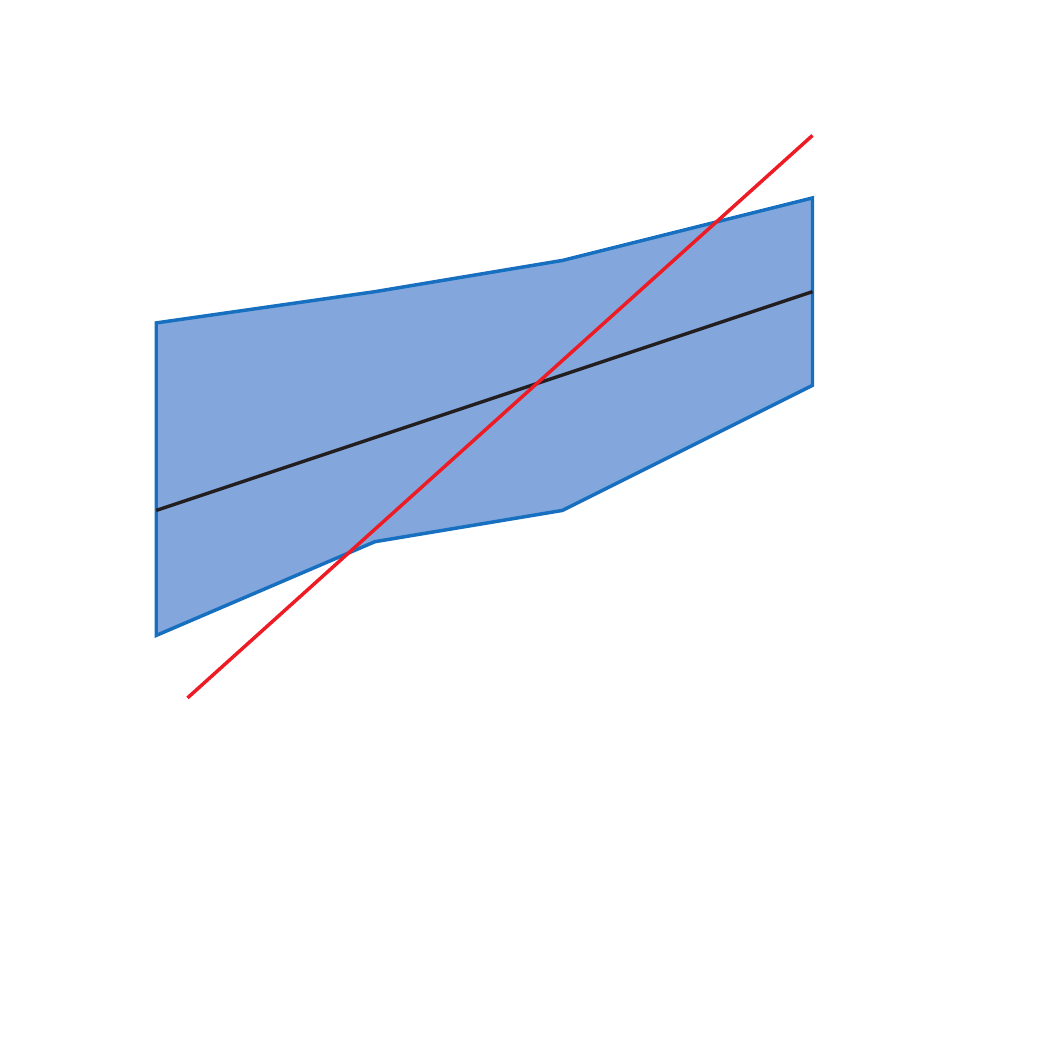}}
\caption{\sl The situation of Lemma \ref{lem: tech}.  The red line only intersects the boundary of the blue approximation twice.}
\label{fig: rep}
\end{figure}

\begin{lemma}
Let $S$ be a single non-vertical line segment and for each $\ep > 0$ let $\Xi_\ep$ an upper or lower sample of $S$ with spread $\sigma(\ep)$ and for every $\ep$ let $f_\ep$ indicate the PL-approximation  of $S$ determined by $\Xi_\ep$.

Let $\xi \in \Hom(\bR_2, \bR)$ and suppose that the line $\{\xi = c\}$ is not parallel to $S$ and intersects $S$.  Then there exists a $\ep^* > 0$ depending only on $S$ and $\xi$ such that for $0 < \ep < \ep_0$, the line $\{\xi = c\}$ intersects the graph of $f_\ep$ exactly once. \qed
\label{lem: tech}
\end{lemma}

Given this lemma, fix a $c \in \bR$ and a $\xi \in \Hom(\bR^2,\bR)$ such that the line $\{\xi = c\}$ is not parallel to a line segment of $R_{x_i,0}$.   We distinguish two cases.

\smallskip

\noindent {\bf A.} \emph{The line  $\{\xi = c\}$ does not intersect $R_{x_i,0}$.}   Theorem \ref{thm: profileApprox} implies that for all sufficiently small $\ep$, the line $\{\xi = c\}$ will not intersect $R_{x_i,\ep}$.  Since $R_{x_i,\ep}$ converges to $R_{x_i,0}$ in Hausdorff measure, the half-plane will contain $R_{x_i,\ep}$ for small $\ep$ if and only if it contains $R_{x_i,0}$.  The sets $R_{x_i,0}$ and $R_{x_i,\ep}$ both must be contractible since they are regions which lie between the graphs of two piecewise linear functions which do not intersect.  Therefore in this case
\[
\lim_{\ep \searrow 0} \chi(\{\xi \geq c\} \cap R_{x_i,\ep}) = \chi(\{\xi \geq c\} \cap R_{x_i,0}).
\]

\smallskip

\noindent {\bf B.} \emph{The line $\{\xi = c\}$ does intersect the set $R_{x_i,0}$.}  Since $R_{x_i,0}$ is convex, we clearly have
\[
\chi(\{\xi \geq c\} \cap R_{x_i,0}) = 1.
\]
Let $\ep \in (0,\ep^*(R_{x_i,0}))$, where $\ep^*(R_{x_i,0})$ is the constant guaranteed by Lemma \ref{lem: tech}.  Then the desired convergence in Euler characteristic is an immediate consequence of the following lemma.

\begin{lemma}(a) For any $d\geq c$ the intersection between the line $\{\xi=d\}$ and $R_{x_i,\ep}$ is either empty or   a closed   segment (possibly degenerate).

\noindent (b) $\chi\bigl(\, \{\xi \geq c\} \cap R_{x_i,\ep}\,\bigr)=1.$
\label{lemma: euler}
\end{lemma}

\begin{figure}[h]
\centering{\includegraphics[height=2.25in,width=2.25in]{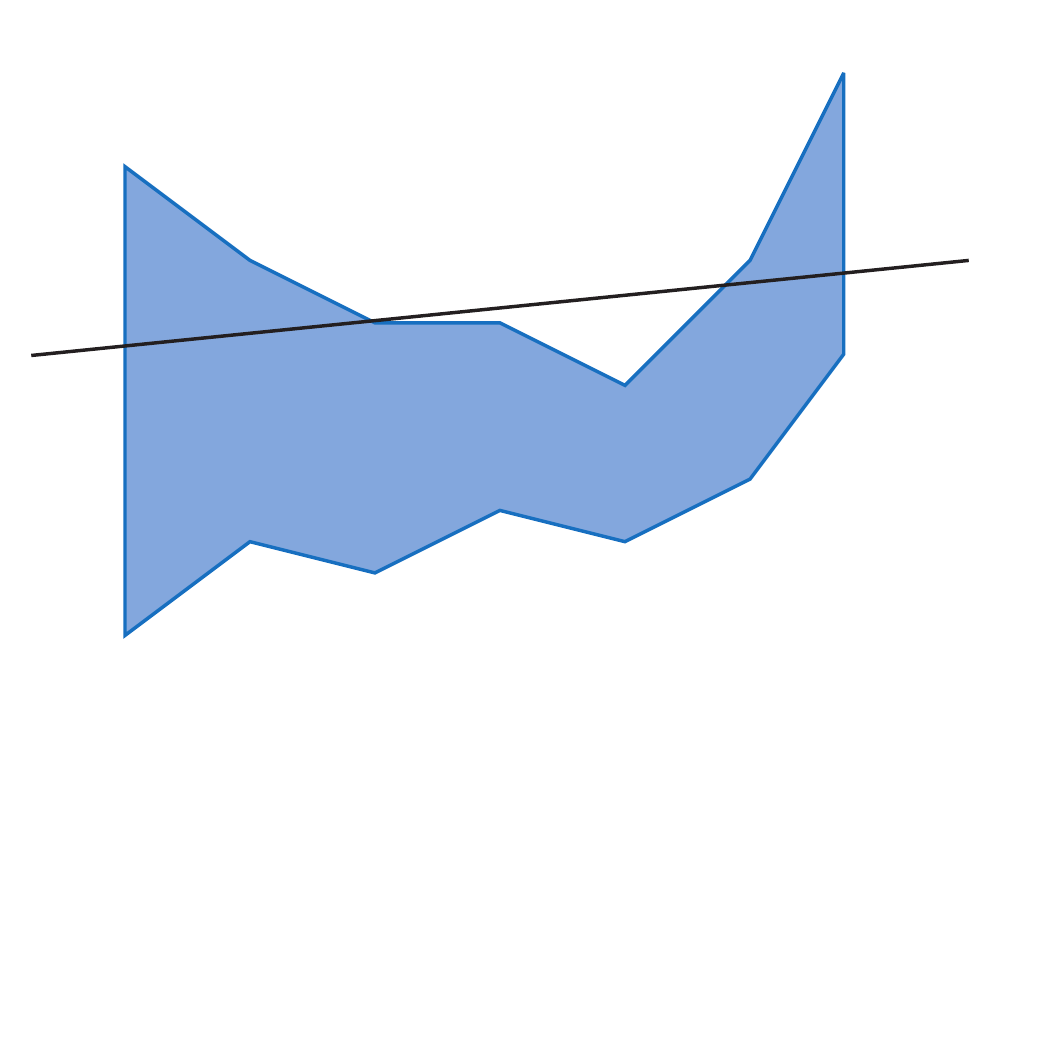}}
\caption{\sl For a line to intersect $R_{x_i,\ep}$ in a non-connected fashion it must intersect either the upper or the lower boundary multiple times.}
\label{fig: nonconnect}
\end{figure}

\begin{proof} (a) Assume  that the line $\{\xi=d \}$ is the  graph of the linear function $\ell(x)=mx+b$. The  boundary of $R_{x_i,\ep}$ has  four component: a top, a bottom  and two vertical side components. The top and boundary components are  graphs of $PL$ functions $T_\ep(x)$ and respectively $B_\ep(x)$ (see Figure \ref{fig: nonconnect}). Lemma \ref{lem: tech} implies   that each line $\{\xi = d\}$ intersects each of these components at most once so the intersection between the line $\{\xi=d\}$ and the boundary of $R_{x_i,\ep}$  consists of at most  four points.

We consider the set $\{\xi = d\} \cap R_{x_i,\ep}$ which is the union of some number of line segments.  Let $p_1, p_2, \dots p_{n}$, $n\leq 4$, be the endpoints of these line segments  arranged increasingly according to their $x$-coordinates.  We claim that $n\leq 2$ so that the intersection between $\{\xi=d\}$ and   $R_{x_i,\ep}$ is a (possibly degenerate) line segment.   To prove this we argue by contradiction.

Suppose  that the  intersection consists  of at least three points $p_1,p_2, p_3$. Let $(t_j,u_j)$ be the coordinates of $p_j$, $1\leq j\leq 3$,  so that $t_1<t_2<t_3$.  Think of the line $\{\xi=d\}$ as the trajectory of a particle  moving in the plane with constant velocity 
\[
x=t, \;\;y=\ell(t)=mt+b.
\]
  The particle  first enters $R_{x_i,\ep}$ at the moment   $t=t_1$ and exits at the moment $t=t_2$. Note that the corresponding exit point $p_2$ cannot be on  either one  of the vertical components of the boundary of $R_{x_i,\ep}$ so it must be either on the top part or on the bottom   of the boundary. Assume that $p_2$ is on the top part of the boundary.  (The case when $p_2$ is on the bottom  is dealt with in a similar fashion.)

Lemma \ref{lem: tech} implies that the particle will never intersect  the top part of the boundary ever again so that
\[
\ell(t)> T_\ep(t)\geq B_\ep(t),\;\;\forall t>t_2.
\]
This implies  that at the  moment  $t_3$ when the particle enters $R_{x_i,\ep}$ again, it must do so through  one of the vertical portions  of the boundary of $R_{x_i,\ep}$.  On the other hand at that moment we have
\[
u_3=\ell(t_3)> T_\ep(t_3)\geq B_\ep(t_3).
\]
so  the point $p_3$ cannot lie on the vertical segment
\[
\bigl\{ (t_3, y); B_\ep(t_3)\leq y\leq T_\ep(t_3)\,\bigr\}
\]
which  the rightmost part of the boundary of $R_{x_i,\ep}$.

(b) Set for simplicity
\[
R_{x_i,\ep}^{\xi \geq c}:= \bigl\{ \xi\geq c\} \cap R_{x_i,\ep}.
\]
Observe that   the  set $I_c:=\xi\bigl( R_{x_i,\ep}^{\xi \geq c}\,\bigr)\subset \bR$ is connected because $ \xi(R_{x_i,\ep})$ is a closed interval and 
\[
I_c= \xi(R_{x_i,\ep})\cap [c,\infty).
\]
The map  $\xi$ defines a continuous surjection $\xi: R_{x_i,\ep}^{\xi \geq c}\ra I_c$ with contractible fibers. The Vietoris-Biegle theorem, \cite[Thm.15,  Chap.6, Sec.9]{Spa} implies  that  $R_{x_i,\ep}^{\xi \geq c}$ has the same  coh
\end{proof}

To complete the proof of (\ref{eq: rconv}) (and consequently also prove (\ref{eq: limb}), Lemma \ref{lemma: lim} and the overall Theorem\ref{th: main}) we need only prove Lemma \ref{lem: tech}

\smallskip

\noindent {\bf Proof of Lemma \ref{lem: tech}.}  Let $\alpha$ be the slope of $S$.  Suppose that $\xi(x_1,x_2)=\xi_1x_1+\xi_2x_2$.  Then 
\[\xi_1 + \xi_2 \alpha \neq 0
\]
 since $S$ is not parallel to $\{\xi = c\}$.  The function  $f_\ep$  is differentiable every where except at a finite number of points. Theorem \ref{thm: profileApprox} implies that there exists an $\ep_0 > 0$ such that for $0 < \ep < \ep_0$, 
\[
\xi_1 + f_\ep'(x) \xi_2
\]
 is either strictly positive or strictly negative for all $x$ in the domain of $f_\ep$ since $f'_\ep(x)$ must be close to $\alpha$ and $\xi_1 + \xi_2 \alpha \neq 0$. We set  $\xi|_{f_\ep}(x) = \xi_1 x + \xi_2 f(x)$. The notation indicates that $\xi|_{f_\ep}$ is the restriction of $\xi$ to the graph of $f_\ep$. Note that  $\xi|_{f_\ep}$ is Lipschitz and 
\[
\frac{d \xi|_{f_\ep}}{dx} (x) = \xi_1 + \xi_2 f_\ep'(x),
\]
for all but finitely many $x$.

Now suppose that  $\{\xi = c\}$ intersects the graph of $f_\ep$ at points $(x_0,y_0)$ and $(x_1,y_1)$.  Then note $\xi|_{f_\ep}$  is equal to $c$ for $x_0$ and $x_1$.  Thus we have, \cite[Prop. 11.12]{Tay}
\[
0=\xi|_{f_\ep}(x_1)-\xi_\ep(x_0)=\int_{x_0}^{x_1} \frac{d \xi|_{f_\ep}}{dx} dx = \int_{x_0}^{x_1}\bigl(\xi_1 + \xi_2 f_\ep'(x)\,\bigr) dx.
\]

But $\xi_1 + \xi_2 f_\ep'(x)$ is either strictly positive or strictly negative.  Therefore the integral
\[
\int_{x_0}^{x_1} \frac{d \xi|_{f_\ep}}{dx} dx
\]
cannot be 0, contradicting our assumption that $\{\xi = c\}$ intersects the graph of $f_\ep$ at two points.  This  completes the proof of  Lemma \ref{lem: tech}
\end{proof}


\newpage
\appendix
\section{The Farey Series and Holes in Pixelations}
\label{s: a}
\setcounter{equation}{0}
An unintuitive feature of pixelations is that they do not preserve homotopy type, even for small values of $\ep$.  The Figure \ref{fig: half} in Section \ref{s: 4} gives an example of a contractible set whose $\ep$-pixelations always contain a cycle.  In this appendix we wish to expand on the nature of these ``holes'' as well as show that an arbitrary number of such holes can appear.

We start by defining a convenient class of sets.

\begin{definition}
The \emph{angle} with slopes $(\alpha,\beta)$,  where $\beta > \alpha$ is  the set
\[
A(\beta, \alpha) := \bigl\{\, (x,y)\in\bR^2: \;\; x\geq 0,\;\; (y-\alpha x)(y-\beta x)=0\,\bigr\}. 
\]
\label{def: angle}
\end{definition}
Angles are convenient to work with in the context of pixelations since they have nice self similarity properties.  Note that
\[
P_\ep(A(\beta, \alpha)) =  \ep P_1(A(\beta,\alpha))
\]
where the last equality holds because $A(\beta,\alpha)$ does not change under rescalings centered at the origin.  This means that different $\ep$-pixelations are simply retractions or expansions of any other $\ep$-pixelation, and are thus topologically equivalent.  Therefore if a cycle appears in any $\ep$-pixelation of an angle, it will appear in every $\ep$-pixelation of an angle.  We refer to these false cycles in the $\ep$-pixelations as \emph{holes}.

Figure \ref{fig: 2hole} is an example of a pixelation of  an angle that has a hole, and in particular, the pixelation is not  contractible, while an angle plainly is. 

\begin{figure}[ht]
\centering{\includegraphics[height=2in,width=2in]{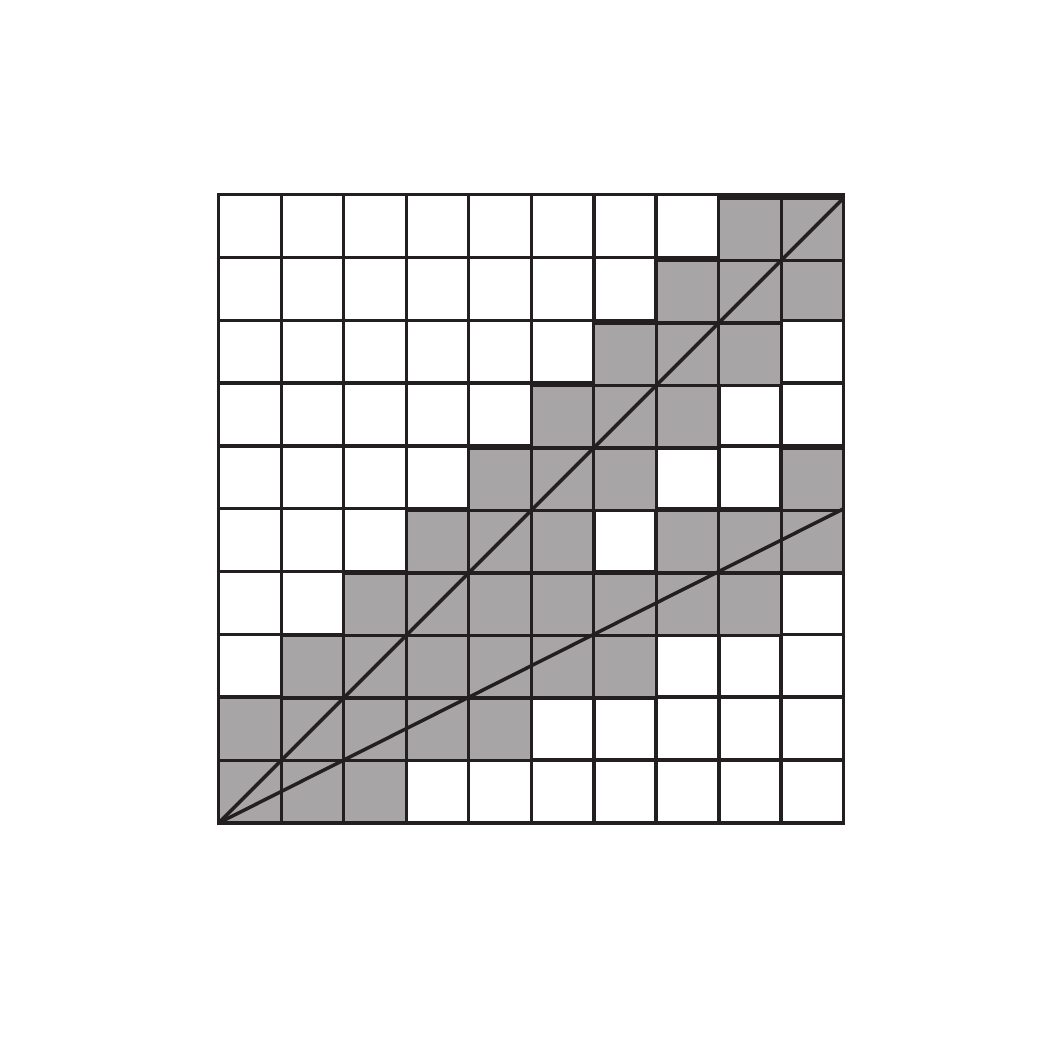}}
\caption{\sl The pixelation of the angle $A\left(1,\frac{1}{2}\right)$ contains 1 hole in the 7th column from the $y$-axis.  }
\label{fig: 2hole}
\end{figure}

When varying the slopes of the angle $A(\beta,\alpha)$ several things become apparent.  First, for a hole to appear either both slopes must be positive or both slopes must be negative. Second, the two lines must have fairly close slopes for holes to appear.  Finally, more holes will tend to appear when one of the slopes is close to $1$.

It is beyond the scope of this paper to classify the behavior of holes for general angles.  However, an easier situation occurs when the two slopes of the angle are adjacent members of a \emph{Farey series}.

The $n$-th Farey series $F_n$ is the \emph{increasing} finite sequence   consisting of the rational numbers between $0$ and $1$ which have denominator of $n$ or less when written in lowest terms.  For example,
\[
F_4 = { \frac{0}{1},\frac{1}{4},\frac{1}{3}, \frac{1}{2},\frac{2}{3},\frac{3}{4},\frac{1}{1}}
\]
An important property of the Farey sequences is that if $\frac{a}{b}$ and $\frac{c}{d}$ are consecutive  terms in a Farey series, then $bc - ad = 1$, \cite[Chap.III]{HW}. Using this property, we can prove the following fact.

\begin{proposition}
Suppose $\frac{a}{b}$ and $\frac{c}{d}$ are consecutive terms in a Farey series.  Denote by $g(a,b;c,d)$  the number of holes of the $\ep$-pixelation
\[
P_\ep \left(A\left(\frac{c}{d},\frac{a}{b}\right)\right),
\]
and by $X_{a,b,c,d}$ the  set of lattice points $(p,q)\in\bZ^2$ such that 
\begin{subequations}
\[
0<p\leq d,
\]
\[
a<q\leq \min(2a,b),
\]
\[
 bd+bc+b+d\leq 1+ pb+qd<2bd+bc+b+d.
 \]
\end{subequations}
Then
\[
g(a,b;c,d):= ad-\# X_{a,b,c,d}
\]
\label{prop: farey}
\end{proposition}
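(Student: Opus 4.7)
The plan is to exploit the self-similarity $P_\ep(A(c/d, a/b)) = \ep \cdot P_1(A(c/d, a/b))$ to reduce to $\ep = 1$, and then perform a column-by-column analysis. In each column $p \ge 1$ the two rays produce two stacks of pixels: a lower stack coming from $y = ax/b$ and an upper stack coming from $y = cx/d$. A hole of the pixelation is a bounded connected component of the complement, and such a component arises as a maximal run of consecutive columns in which the two stacks are vertically separated, flanked on left and right by columns in which the stacks merge, so that the empty region is sealed off from the unbounded component. Using the half-open convention $[p-1,p) \times [q-1,q)$, the top row of the lower stack and the bottom row of the upper stack in column $p$ are generically $T(p) = \lceil ap/b \rceil$ and $B(p) = \lceil c(p-1)/d \rceil$, and the two stacks merge in column $p$ iff $B(p) \le T(p) + 1$.

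The Farey identity $bc - ad = 1$ plays a central role: it forces $\gcd(b,d) = 1$, and the vertical gap $x/(bd)$ between the two rays grows at the minimal rate consistent with the denominators, giving a clean lattice description of the stacks. I would then set up a correspondence between potential holes and lattice points $(p, q)$ in the rectangle $\{0 < p \le d,\ a < q \le \min(2a, b)\}$. Each such $(p,q)$ is a ``slot'' for a potential hole, and a direct count using $bc - ad = 1$ shows that there are $ad$ such slots in total, producing the leading term. The set $X_{a,b,c,d}$ is designed to record the slots where no hole actually forms: the residue condition $1 + pb + qd \in [bd + bc + b + d,\, 2bd + bc + b + d)$ encodes precisely that the stacks in the relevant adjacent column fail to merge, so the candidate empty region communicates with the unbounded complement instead of being enclosed. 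Subtracting yields $g(a,b;c,d) = ad - \#X_{a,b,c,d}$ genuine holes.

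The main obstacle will be establishing this correspondence between lattice slots $(p,q)$ and enclosures, and in particular the escape criterion. One must compare $T$ and $B$ in two consecutive columns and use $bc - ad = 1$ once more to translate the vertical test ``$B(p') \le T(p') + 1$ in the adjacent column $p'$'' into the linear arithmetic condition on $1 + pb + qd$. The specific constants $bd + bc + b + d$ and $2bd + bc + b + d$, as well as the bounds $p \le d$ and $q \le \min(2a, b)$, must emerge from this comparison: the interval localizes the residue range corresponding to an escape, and the rectangle delimits the region in which a candidate hole can possibly live (outside this range the gap is either too narrow, so no empty pixel exists, or so wide that the stacks are automatically separated by more than one pixel and every candidate survives trivially). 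I expect this bookkeeping to be the technically demanding step, but to be driven almost mechanically by the identity $bc - ad = 1$ once the correct parametrization of candidate holes is in place.
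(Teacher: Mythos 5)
Your overall frame agrees with the paper's: rescale to $\ep=1$, track in each column the top row $L_i$ of the lower ray's stack and the bottom row $U_i$ of the upper ray's stack, characterize a hole by the right-edge condition (the paper's version is $U_i-L_i=1$ together with $L_{i+1}=L_i+1$), and exploit $bc-ad=1$ to make the vectors $(b,a)$ and $(d,c)$ an integral basis of $\bZ^2$. But two essential pieces of your plan are wrong or missing. First, the claim that the rectangle $\{0<p\leq d,\ a<q\leq \min(2a,b)\}$ contains $ad$ lattice points, each a ``slot'' for a potential hole, is false: that rectangle has $d\cdot\min(a,b-a)$ lattice points, which equals $ad$ only when $2a\leq b$ (e.g.\ for $\frac{a}{b}=\frac{2}{3}$, $\frac{c}{d}=\frac{1}{1}$ it has $1$ point while $ad=2$). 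In the paper the $ad$ has a completely different origin: setting $D_i=U_i-L_i$, periodicity gives $D_{i+bd}=D_i+1$, holes are shown to occur only for $i$ in a single window $[k_1,k_1+bd)$ of length $bd$ where $k_1=\min\{k:D_k=1\}$, and $ad$ is the number of columns in that window at which $L_i$ increases (since $L$ gains $a$ every $b$ columns). This localization step --- including the lattice-basis computation showing $k_1=bd+bc+b+d$, which is exactly where the constants in the definition of $X_{a,b,c,d}$ come from --- is absent from your plan, and without it you cannot rule out holes far down the rays nor explain the residue interval.

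Second, your interpretation of $X_{a,b,c,d}$ is inverted. You describe it as recording slots where ``the stacks in the relevant adjacent column fail to merge, so the candidate empty region communicates with the unbounded complement.'' In fact $X_{a,b,c,d}$ (via the injection $(p,q)\mapsto 1+pb+qd$) parametrizes the columns $i$ in the window where $L_{i+1}=L_i+1$ but $D_i=0$, i.e.\ where the two stacks already touch, so there is no empty pixel to enclose and the increase of $L$ produces no hole. Subtracting these from the $ad$ increases gives the count. Your strategy would need to be rebuilt around the quantity $D_i$, its periodicity, and the identification of the first zero of $D$ as a lattice point in the cone spanned by $(b,a)$ and $(d,c)$ before the bookkeeping you defer could be carried out.
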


\begin{proof}
As noted in the earlier discussion, any $\ep$-pixelation of the angle is topologically equivalent to $P_1\left(A\left(\frac{c}{d},\frac{a}{b}\right)\right)$.  Therefore we need only  to consider the case $\ep=1$.  We set
\[
m_1:=\frac{c}{d},\;\;m_0:=\frac{a}{b}.
\]
For $j=0,1$ we denote  by $\ell_{j}$ the half-line
\[
y=m_{j}x,\;\;x\geq 0. 
\]
For each $i\in\bZ_{>0}$ we define (using the notations in Definition \ref{def: bu})
\begin{equation}
U_i := B_1\left(\ell_1 ,\frac{2i - 1}{2}\right)=\begin{cases}
\left\lfloor m_1(i-1)\right\rfloor, & m_1(i-1)\not\in \bZ\\
&\\
m_1(i-1)-1,& m_1(i-1)\in\bZ
\end{cases},
\label{eq: ui}
\end{equation}
\begin{equation}
L_i := T_1\left(\ell_0,\frac{2i - 1}{2}\right)=
\begin{cases}
\left\lceil m_0i\right\rceil,& m_0i\not\in\bZ\\&\\
m_0i+1,& m_0i\in\bZ
\end{cases}.
\label{eq: ti}
\end{equation}
In other words, $U_i$ indicates the lowest $y$-value of the $1$-pixelation of the  half-line $\ell_1$ within the $i$-th column $[i-1,i]\times \bR$, and $L_i$ indicates the highest $y$-value of the $1$-pixelation of the half-line $\ell_0$ in the same column.    Set $D_i:=U_i-L_i$ and observe that (\ref{eq: ui}) and (\ref{eq: ti}) imply
\begin{equation}
D_i<(m_1-m_0)i=\frac{i}{bd}.
\label{eq: di}
\end{equation}
Note also that
\begin{equation}
D_1=-2,\;\;D_2\in\{-1,-2\}.
\label{eq: init}
\end{equation}
More generally, (\ref{eq: di}) implies that
\begin{equation}
D_i\leq 0,\;\;\forall i=1,\dotsc, bd.
\label{eq: di1}
\end{equation}
The quantities $U_i$ and $L_i$  are useful because they allow us to easily state when there is a gap in a column or on the edge of a column.  The  $i$-th columns of the pixelations of the upper and lower half-lines overlap  if and only if
\[
U_i \le L_i
\]
and the two pixelations overlap on the right side of the $i$-th column if and only if
\[
U_i \le L_{i+1}
\]
A hole is a gap which is closed on both sides.  In particular, it is closed on the right side.  This means in each hole there is a column where the pixelations of $\ell_0$ and $\ell_1$ do not overlap on the interior of the column, but do overlap on the right side of the column.  In terms of $U_i$ and $L_i$ this is equivalent to the condition
\begin{equation}
L_i<  U_i \le L_{i+1}
\label{eq: ul condition 1}
\end{equation}
Note that this condition only occurs when $L_{i+1} > L_i$.  Since $\frac{a}{b}\leq 1$ we have 
\[
 L_{i+1} \le L_i + 1
\]
This means that the condition $L_{i+1} > L_i$ is equivalent to the condition $L_{i+1} = L_i + 1$.  

Suppose that $U_i - L_i = 2$.  Then it is impossible that $U_i \le L_{i+1}$ since that would imply that the lower line increased by at least the length of a pixel in the amount of time that the upper line increased by less than the length of a pixel.  Therefore for (\ref{eq: ul condition 1}) to occur, it must be true that $U_i - L_i = 1$.  Combining this fact with the above discussion about $L_i$ and $L_{i+1}$, we find that (\ref{eq: ul condition 1}) is equivalent to the condition
\begin{equation}
U_i - L_i = 1 \text{ and } L_{i+1} = L_i + 1.
\label{eq: ul condition 2}
\end{equation}

\medskip

Note that $L_i$ increases by $c$ every $d$ columns since $\ell_1$  has slope $\frac{c}{d}$.  Hence
\[
U_{i + d} = U_i + c,
\]
and similarly
\[
L_{i + b} = L_i + a.
\]
Therefore
\[
U_{i + bd} - L_{i + bd} = U_i + bc - (L_i + ad) = U_i - L_i + (bc - ad) = U_i - L_i + 1,
\]
so that
\begin{equation}
D_{i+bd}=D_i+1.
\label{eq: diff con 1}
\end{equation}
 We set
\[
k_1:=\min \{ k\in\bZ_{>0};\;\;D_k=1\,\bigr\}.
\]
We know that $k_1$ exists since (\ref{eq: diff con 1}) and (\ref{eq: init}) imply that $D_i$ takes on every integer value greater than or equal to $1$.  Note that  (\ref{eq: di}) implies that, if $k_1$ exists, then $k_1>bd$.  To finish the proof we will need a variety of facts about $k_1$. 

\begin{lemma}(a) $k_1:= (a+1)d+b+1 +bd=bd+ bc+b+d$.

\noindent (b)  For any $ i\in [k_1,k_1+bd)\cap\bZ$  we have
\begin{equation}
0\leq D_i < 2
\label{eq: u-l}
\end{equation}
(c) For any $i\geq k_1+bd$ we have $D_i>0$.
\label{lemma: tech}
\end{lemma}

\begin{proof}  (a)   The inequality (\ref{eq: di}) implies that  $k_1>bd$ so that 
\[
k_0:=k_1-bd>0
\]
and 
\[
D_{k_0}=0.
\]
Moreover, the minimality of $k_1$ coupled with (\ref{eq: diff con 1}) implies that
\[
k_0=\min\{i>0;\;\;D_i=0\,\}.
\]
If $D_i=0$  then   there  exists a  positive integer $\ell$ such that
\begin{subequations}
\begin{equation}
\ell-1\leq m_0 (i-1)<m_0i<\ell .
\label{eq: sl1}
\end{equation}
\begin{equation}
\ell <m_1(i-1)\leq \ell+1
\label{eq: sl2}
\end{equation}
\end{subequations}
When these conditions are satisfied we have $U_i=L_i=\ell$. The lattice point $( i-1,\ell)$ is  in the interior of the angle $A(m_1,m_0)$ spanned by the vectors
\[
\vec{u}_1=(b,a),\;\;\vec{u}_2=(d,c).
\]
Since $bc-ad=1$   we deduce that  the vectors $\vec{u}_1,\vec{u_2}$ form an integral basis of the lattice $\bZ^2$. We deduce that there exist two positive integers  $p,q$ such that
\[
(i-1,\ell) =p\vec{u}_1+q\vec{u_2},\;\;\mbox{i.e.,}\;\; i-1= pb +qd,\;\;\ell =pa+qc.
\]
Observe that
\[
m_0(i-1)= \frac{a}{b}( pb +qd) = ap + q\frac{ad}{b}= ap +q\frac{bc-1}{b}= ap+ qc -\frac{q}{b}=\ell-\frac{q}{b}.
\]
Hence (\ref{eq: sl1}) is satisfied if and only if  
\begin{equation}
a<q\leq b.
\label{eq: q}
\end{equation} On the other hand,
\[
m_1 (i-1)= \frac{c}{d}(pb+qd)=p\frac{bc}{d}+qc =p\frac{ad+1}{d} +qc= pa +qc+ \frac{p}{d}=\ell +\frac{p}{d}.
\]
Hence (\ref{eq: sl2}) is satisfied when
\begin{equation}
0<p \leq d. 
\label{eq: p}
\end{equation}
From the equality
\[
i=1+pb+qd
\]
we deduce that
\[
\min\{i>0; D_i=0\}= \min\{ 1+pb+qd;\;\;0<p\leq d,\;\; a<q\leq b\,\}=1+b +(a+1)d.
\]
This proves (a).

\medskip
\noindent (b) To prove the upper estimate in (\ref{eq: u-l})   we use (\ref{eq: di}). For $i <k_1+bd$ we have 
\[
D_i< 1+\frac{k_1}{bd}= 1+\frac{(a+1)d+b+1}{bd}=1+\frac{a+1}{b}+\frac{b+1}{bd}< 3.
\]
Thus $D_i\leq 2$ for all $i\in [k_1,k_1+bd)\cap\bZ$. If $D_i=2$ for some $i$ in this range then $D_{i-bd}=1$, contradicting the minimality of $k_1$.

To prove the lower estimate   part of (\ref{eq: u-l}) we recall that 
\[
k_0=k_1-bd=\min\{\, k>0;\;\;D_k=0\,\}.
\]
Let us observe that
\[
D_i\geq -1,\;\;\forall i\geq k_0.
\]
Indeed the inequality $D_i<-1$ takes place only if  $m_0i\geq\ell\in\bZ$ and $m_1(i-1)\leq\ell$. This implies 
\[
m_1(i-1)\leq m_0i \Longleftrightarrow \frac{i}{bd}\leq \frac{c}{d} \Longleftrightarrow i\leq bc =ad+1< k_0.
\]
Hence, for any $i\geq k_0+bd$ we have $D_i\geq 0$ proving part (b). 
\medskip
Part (c) follows from (b) and  (\ref{eq: diff con 1}).
\end{proof}

This lemma narrows down the possible locations of holes.  In particular, property (c) implies that holes can only occur in columns in the interval $[k_1, k_1 + bd)$.  Recall that for a hole to appear, we need $L_{i+1} = L_i + 1$ and $D_i = 1$.  In most cases when $L_i$ increases in this interval $D_i$ will also be equal to $1$.  However, if $U_{i+1} = U_i + 1$, $L_{i+1} = L_i + 1$ and $D_i = 0$, then no hole appears.  We want to count these instances where an increase in $L_i$ does not indicate the presence of a hole through the use of the set $X_{a,b,c,d}$.  We do this through a series of lemmas.

Define a  map
\[
\Phi:X_{a,b,c,d}\ra \bZ,\;\;(p,q)\mapsto 1+pb+qd.
\]
\begin{lemma} 
The map $\Phi$ is injective.
\end{lemma}

\begin{proof} 
Indeed if $\Phi(p,q)=\Phi(p',q')$ then $(p-p')b=(q'-q)d$. Since $b$ and $d$ are coprime we deduce that $(p-p')$ must be a multiple of $d$. Since $0<p,p'\leq d$ this happens if and only if $p=p'$.  It automatically follows that $q=q'$.
\end{proof}

We denote by $I_{a,b,c,d}$ the range of $\Phi$. From the definition of $X_{a,b,c,d}$ we  deduce that 
\[
I_{a,b,c,d}\subset [k_1,k_1+bd).
\]

\begin{lemma}  The following statements are equivalent.

\noindent (a) $i\in [k_1,k_1+bd)\cap\bZ$, $U_i=L_i$ and $L_{i+1}=L_i$.

\noindent (b) $i\in I_{a,b,c,d}$ 
\end{lemma}

\begin{proof}  Let  $i\in [k_1,k_1+bd)\cap\bZ$ such that  $L_{i+1}=L_i+1$ and  $D_i=0$.  Using the notation and the terminology employed in the proof of Lemma \ref{lemma: tech}(a) we deduce that the condition $D_i=0$  holds if and only if  that there exist positive  an integer $\ell$ such that (\ref{eq: sl1}) and (\ref{eq: sl2}) hold.  Moreover
\[
i= 1+pb +qd,
\]
where $q$ and $p$  are constrained by (\ref{eq: q}) and (\ref{eq: p}).  Once the condition $D_i=0$ is satisfied the condition $L_{i+1}=L_i+1=\ell+1$  is equivalent to 
\[
\ell\leq m_0(i+1)<\ell +1.
\]
\[
m_0(i+1)=m_0(i-1)+2m_0=\ell+\frac{2a-q}{b}\geq \ell.
\]
Hence $a<q\leq \min(b,2a)$, $0<p\leq d$, and $i=\Phi(p,q)$, $(p,q)\in X_{a,b,c,d}$, i.e., $i\in I_{a,b,c,d}$.
\end{proof}

Lemma \ref{lemma: tech}(c) shows that  the conditions of (\ref{eq: ul condition 2}) cannot be met in the $(k + bd)$-th column and beyond.  Therefore, the right edge of any hole must occur between the $k$-th column and the $(k + bd)$-th column.
Therefore the pixelation of the angle contains a hole for each time that $U_i-L_i=1$ and  $L_i$ increases between the $k$-th and $(k + bd)$-th column (including $k$ but not $(k + bd)$.) and   This is a total of $bd$ columns, and $L_i$ increases by $a$ every $b$ columns.  Hence $L_i$ will increase $ad$ times in this range.    On the other hand if $i\in I_{a,b,c,d}$ the $L_i$ increases  yet  $U_i=L_i$. Therefore $P_\ep\left(A\left(\frac{c}{d},\frac{a}{b}\right)\right)$ contains $ad-\# I_{a,b,c,d}$ holes.  The conclusion of the proposition follows from the injectivity of $\Phi$.
\end{proof}

\begin{remark} The proof   of  Proposition \ref{prop: farey}  gives us a simple  algorithm for computing  $g(a,b;c,d)$ that is easily implementable numerically. More precisely we have
\[
g(a,b;c,d)=\sum_{i=k_1}^{k_1+bd-1} (U_i-L_i)(L_{i+1}-L_i).\proofend
\]
\end{remark}

\begin{corollary}
\[
g(n,2n+1; 1,2)=2n.\proofend
\]
\label{cor: farey}
\end{corollary}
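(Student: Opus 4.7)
The plan is to apply Proposition \ref{prop: farey} directly with $(a,b) = (n, 2n+1)$ and $(c,d) = (1,2)$, and show that the exceptional set $X_{a,b,c,d}$ is empty, so that $g(n,2n+1;1,2) = ad - 0 = 2n$.

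First I would verify that $\tfrac{n}{2n+1}$ and $\tfrac{1}{2}$ are consecutive terms of some Farey series, which amounts to checking $bc - ad = (2n+1)(1) - n(2) = 1$. This justifies applying Proposition \ref{prop: farey}, which gives
\[
g(n,2n+1;1,2) = 2n - \# X_{n,2n+1,1,2}.
\]
So the whole problem reduces to showing $X_{n,2n+1,1,2} = \emptyset$.

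Next I would plug the parameters into the definition of $X_{a,b,c,d}$. The first two constraints become $0 < p \le 2$ and $n < q \le \min(2n, 2n+1) = 2n$. For the third constraint, with $bd = 4n+2$, $bc = 2n+1$, $b = 2n+1$, $d = 2$, the inequalities $bd+bc+b+d \le 1 + pb + qd < 2bd + bc + b + d$ simplify to
\[
8n + 5 \;\le\; p(2n+1) + 2q \;<\; 12n + 7.
\]
Now I would do a quick case analysis on $p \in \{1,2\}$. For $p=1$, the lower bound forces $2q \ge 6n+4$, i.e.\ $q \ge 3n+2$, which is incompatible with $q \le 2n$ for any $n \ge 1$. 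For $p = 2$, the lower bound forces $2q \ge 4n + 3$, and since $q$ is an integer this means $q \ge 2n+2$, again incompatible with $q \le 2n$. Thus no $(p,q)$ satisfies all the defining conditions of $X_{n,2n+1,1,2}$, so $\# X_{n,2n+1,1,2} = 0$ and $g(n,2n+1;1,2) = 2n$.

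There is no real obstacle here: the proof is a direct substitution followed by a two-line case analysis. The only mild caveat is being careful with the integrality step in the case $p = 2$, where $2q \ge 4n + 3$ (an odd integer) must be strengthened to $q \ge 2n+2$ in $\bZ$; this is precisely what makes the inequality $q \ge 2n+2$ incompatible with the upper bound $q \le 2n$.
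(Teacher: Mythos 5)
Your proof is correct and follows the same route as the paper: apply Proposition \ref{prop: farey} and check that $X_{n,2n+1,1,2}$ is empty. The paper simply asserts the emptiness without computation, whereas you supply the (correct) two-case verification, including the integrality step $2q \ge 4n+3 \Rightarrow q \ge 2n+2$; this is a welcome elaboration, not a different approach.
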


\begin{proof} In this case the set $X_{n,2n+1,1,2}$ is empty and the above equality follows immediately from Proposition \ref{prop: farey}.
\end{proof}

The above corollary shows that  the pixelation of even a very simple set can  have a complicated homotopy type.





\begin{ex} 

Let us consider  the following situation
\[
\frac{c}{d}=\frac{1}{2},\;\;\frac{a}{b}=\frac{2}{5}.
\]
Then
\[
U_{i+2}=U_i+1,\;\;L_{j+5}=L_j+2,\;\;D_{i+10}=D_i+1.
\]
Using (\ref{eq: ui}), (\ref{eq: ti}) and the above equalities we get
\[
\begin{tabular}{||c|r|r|r|r|r|r|r|r|r|r|r||}\hline\hline
$i$ & $1$ & $2$ & $3$ & $4$ & $5$& $6$ &  $7$ & $8$ & $9$ & $10$ & $11$\\ \hline\hline
 $U_i$ &   $-1$ &  $0$ & $0$ & $1$ &  $1$ & $2$ & $2$ & $3$ & $3$ & $4$&$4$\\ \hline
 $L_i$ &  $1$ &  $1$ & $2$ & $2$ & $3$ & $3$ & $4$ & $4$ & $4$ & $5$ & $5$ \\ \hline
 $D_i$ &  $-2$ & $-1$ & $-2$ & $-1$  & $-2$  & $-1$ & $-2$ & $-1$ & $-1$ & $-1$ & $-1$\\ \hline
\end{tabular}
\]
Using this table and  (\ref{eq: diff con 1}) we deduce that  $k=22$.  Next, using the above table and the equalities
 \[
U_{i+20}=U_i+10,\;\; L_{i+20}= L_i+8
\]
we obtain the following table
\[
\begin{tabular}{||c|c|c|c|c|c|c|c|c|c|c||}\hline\hline
$i$ & $22$ & $23$ & $24$ & $25$ & $26$& $27$ &  $28$ & $29$ & $30$ & $31$\\ \hline\hline
 $U_i$ &   $10$ &  $10$ & $11$ & $11$ &  $12$ & $12$ & $13$ & $13$ & $14$ & $14$\\ \hline
 $L_i$ &  $9$ &  $10$ & $10$ & $11$ & $11$ & $12$ & $12$ & $12$ & $13$ & $13$ \\ \hline
 $D_i$ &  $1$ & $0$ & $1$ & $0$  & $1$  & $1$ & $1$ & $1$ & $1$ & $1$\\ \hline
\end{tabular}
\]

In this case there are five holes.  Two holes are immediately apparent, since in the $23$rd and $25$th column a distance of $1$ is followed by a distance of $0$.  However note that in the $26$th column we have $D_{26}$ of $1$ and then $U_{26} = L_{27}$.  This indicates that a hole is formed merely because the pixelations of the upper and lower lines touch at a corner (compare to the case of Figure \ref{fig: 2hole}.)  Similar holes exist in the $27$th column and the $29$th column.  No holes exist past this point, since Lemma \ref{lemma: tech} implies that past this point $D_i > 0$.
\qed
\label{ex: ex}
\end{ex}

\section{Subanalytic Currents}
\label{s: b}
\setcounter{equation}{0}
In this appendix  we gather  without proofs a few facts  about  the subanalytic currents introduced by R. Hardt in \cite{Hardt, Hardt2}.  Our  terminology    concerning currents closely  follows that of Federer \cite{Feder} (see also the more accessible \cite{Kra, Mor}).  However, we changed some  notations to better resemble notations used in  algebraic topology. First we need to define  the subanalytic sets.

An  \emph{$\bR$-structure}
is a collection $\eS=\bigl\{\, \eS^n\,\bigr\}_{n\geq 1}$,
$\eS^n\subset \eP(\bR^n)$, with the following properties.

\begin{description}

\item[${\bf E}_1.$]   $\eS^n$ contains all the real algebraic subvarieties of $\bR^n$, i.e., the zero sets of  finite collections of polynomial in $n$ real variables.

\item[${\bf E}_2.$]  For every   linear map $L:\bR^n\ra \bR$, the half-plane $\{\vec{x}\in \bR^n;\;\;L(x)\geq 0\}$  belongs to $\eS^n$.

\item[${\bf P}_1.$] For every $n\geq 1$, the family $\eS^n$ is closed under  boolean operations, $\cup$, $\cap$ and complement.

\item[${\bf P}_2.$]  If $A\in \eS^m$, and $B\in \eS^n$, then $A\times B\in \eS^{m+n}$.

\item[${\bf P}_3.$]  If $A\in \eS^m$, and $T:\bR^m\ra \bR^n$ is an affine map, then $T(A)\in \eS^n$.

\end{description}

\begin{ex}[Semialgebraic sets]    Denote by $\bR_{alg}$ the collection of real semialgebraic sets.  Thus,  $A\in \bR^n_{alg}$ if and only if  $A$  is a finite  union of sets,  each of which is described by finitely many polynomial equalities and inequalities. The celebrated Tarski-Seidenberg theorem states that $\eS_{alg}$ is a structure.\qed
\end{ex}

Let $\eS$ be an $\bR$-structure. Then a set that belongs to one of
the  $\eS^n$-s is called   $\eS$-\emph{definable}. If $A, B$ are
$\eS$-definable, then a function $f: A\ra B$ is called
$\eS$-\emph{definable} if its graph $ \Gamma_f :=\bigl\{\, (a,b)\in A\times B;\;\;b=f(a)\,\bigr\}$ is $\eS$-definable.

Given a collection $\eA=(\eA_n)_{n\geq
1}$, $\eA_n\subset\eP(\bR^n)$, we can form a new structure
$\eS(\eA)$, which is the smallest structure containing   $\eS$ and
the sets in $\eA_n$. We say that $\eS(\eA)$ is obtained  from  $\eS$
by \emph{adjoining the collection $\eA$}.

\begin{definition} An $\bR$-structure is called \emph{$o$-minimal} (order minimal) or \emph{tame} if  it satisfies the property

\begin{description}
\item[{\bf T}]    Any set $A\in  \eS^1$ is a \emph{finite} union of open intervals $(a,b)$, $-\infty \leq a <b\leq \infty$, and singletons $\{r\}$. \qed
\end{description}

\end{definition}

\begin{ex} (a) (Tarski-Seidenberg)  The collection  $\bR_{alg}$ of real semialgebraic sets  is a tame structure.

\noindent (b) (A. Gabrielov, R. Hardt, H. Hironaka, \cite{Gab, Hardt2, Hiro})   A \emph{restricted} real
analytic function is a  function $f:\bR^n\ra \bR$ with the property
that there exists a real analytic function $\tilde{f}$ defined in an
open  neighborhood $U$ of the cube $C_n:=[-1,1]^n$ such that
\[
f(x)=\begin{cases}
\tilde{f}(x) & x\in C_n\\
0 & x\in \bR^n\setminus C_n.
\end{cases}
\]
we denote by $\bR_{an}$ the structure obtained from $\eS_{alg}$ by
adjoining the  graphs of all the restricted real analytic functions.
Then $\bR_{an}$ is a tame structure, and the $\bR_{an}$-definable
sets are called \emph{(globally) subanalytic sets}. \qed
\end{ex}

The  definable sets  and function of a tame structure have  rather remarkable \emph{tame} behavior which prohibits  many pathologies.  It is perhaps   instructive to give an example of function which is not definable in any tame structure. For example, the function $x\mapsto \sin x$ is not definable in a tame structure because the intersection of its graph with the horizontal axis is the  countable set $\pi\bZ$  which  violates  the tameness condition ${\bf T}$.

 We list below some of the nice properties of the sets and function definable  in a  fixed tame structure  $\eS$.  Their proofs can be found in \cite{Co, Dr}. We will interchangeably refer to  sets or functions definable in  a given tame structure $\eS$ as \emph{definable}, \emph{constructible} or  \emph{tame}.

\smallskip

\noindent \ding{227}   (\emph{Piecewise  smoothness of  tame
functions.})  Suppose $A$ is a definable  set, $p$ is a
positive integer, and $f: A\ra \bR$ is a definable function. Then
$A$ can be partitioned into finitely many   definable sets
$S_1,\dotsc, S_k$,     such that each  $S_i$ is a $C^p$-manifold,
and each of the restrictions $f|_{S_i}$ is a $C^p$-function.

\noindent  \ding{227} (\emph{Triangulability.})  For every   compact
definable set $A$, and any finite collection of definable  subsets
$\{S_1,\dotsc, S_k\}$, there exists  a compact simplicial complex
$K$, and a  definable homeomorphism $\Phi: |K|\ra A$ such that  all the sets $\Phi^{-1}(S_i)$ are unions of  relative
interiors of faces of $K$.

\noindent \ding{227} (\emph{Dimension.})  The  dimension of a
definable  set $A\subset \bR^n$ is the supremum over all the
nonnegative integers $d$ such that there exists a $C^1$  submanifold
of $\bR^n$ of dimension $d$ contained in $A$.  Then $\dim A
<\infty$, and $\dim (\cl(A)\setminus A) <\dim A$.

\noindent \ding{227}(\emph{Definable selection.}) Any tame map  $f: A\ra B$ (not necessarily continuous) admits a tame section, i.e.,  a tame map $s: B\ra A$ such that $s(b)\in f^{-1}(b)$, $\forall b\in B$.

\noindent \ding{227} (\emph{Local triviality of tame maps}) If $f: A\ra B$ is a tame continuous map, then there exists a  tame triangulation of $B$ such that over the relative interior of any face the map  $f$ is  a locally trivial fibration.

\noindent \ding{227} (\emph{The $o$-minimal Euler characteristic}) There exists a function $\chio:\eS\ra \bZ$ uniquely characterized  by the following  conditions.

\begin{itemize}

\item $\chio(X\cup Y)=\chio(X)+\chio(Y)-\chio(X\cap Y)$, $\forall X,Y\in \eS$.

\item  If $X\in \eS$ is compact, then $\chio(X)$ is the usual Euler characteristic of $X$.

\end{itemize}

\noindent \ding{227} (\emph{Finite volume.}) Any compact
$k$-dimensional tame set has finite $k$-dimensional Hausdorff
measure $\eH^k$. 

\noindent \ding{227} (\emph{Uniform volume bounds.}) If $f:A\ra B$ is a proper, continuous  definable map such that all the fibers  have dimensions $\leq k$, then   there exists $C>0$ such that
\[
 \eH^k\bigl(\, f^{-1}(b)\,\bigr) <C,\;\;\forall b\in B.
\]
\qed

Suppose $X$ is a $C^2$, oriented Riemann manifold of dimension $n$.
We denote by $\Omega_k(X)$ the space of $k$-dimensional currents in
$X$, i.e., the topological dual space of the space
$\Omega^k_{cpt}(X)$ of smooth, compactly supported $k$-forms on $X$.
We will denote by
\[
\lan\bullet,\bullet\ran: \Omega^k_{cpt}(X)\times \Omega_k(X)\ra \bR
\]
the natural pairing.  The boundary of a current $T\in \Omega_k(X)$
is  the $(k-1)$-current defined via the Stokes formula
\[
\lan \alpha, \pa T\ran :=\lan d\alpha, T\ran,\;\;\forall \alpha\in
\Omega^{k-1}_{cpt}(X).
\]
For every  $\alpha\in \Omega^k (X)$,  $T\in \Omega_m(X)$,  $k\leq m$
define $\alpha \cap T\in \Omega_{m-k}(X)$ by
\[
\lan\beta , \alpha \cap T  \ran =\lan \alpha\wedge \beta, T
\ran,\;\;\forall \beta\in \Omega^{n-m+k}_{cpt}(X).
\]
We have
\[
\lan \beta, \pa (\alpha \cap T)\ran = \lan \,d\beta, (\alpha\cap
T),\ran = \lan \alpha\wedge d\beta, T\ran
\]
\[
=(-1)^k \lan d(\alpha\wedge \beta) -d\alpha\wedge \beta, T\ran =
(-1)^k \lan \beta, \alpha \cap\pa T\ran +(-1)^{k+1} \lan \beta,
d\alpha \cap T\ran
\]
which yields the \emph{homotopy formula}
\begin{equation}
\pa (\alpha\cap T)= (-1)^{\deg \alpha} \bigl(\, \alpha \cap \pa
T-(d\alpha) \cap T\,\bigr). \label{eq: homotop}
\end{equation}

We say that a set  $S\subset \bR^n$ is \emph{locally subanalytic}
if for any $p\in \bR^n$ we can find an open ball $B$ centered at
$p$ such that $B\cap S$ is globally subanalytic.

\begin{remark} There is a rather subtle distinction between globally subanalytic and locally subanalytic sets. For example, the graph of the function $y=\sin(x)$ is a locally subanalytic subset of $\bR^2$, but it is not a globally subanalytic  set. Note that a compact, locally subanalytic set is globally subanalytic.\qed
\end{remark}

If $S\subset \bR^n$ is an orientable, locally subanalytic, $C^1$
submanifold of $\bR^n$ of dimension $k$,  then any orientation
$\ori_S$ on $S$ determines a  $k$-dimensional current $[S,\ori_S]$
via the equality
\[
\lan \alpha, [S, \ori_S]\ran:=\int_S \alpha,\;\;\forall \alpha\in
\Omega^k_{cpt}(\bR^n).
\]
The integral in the right-hand side is well defined because any
bounded, $k$-dimensional  globally subanalytic set has finite
$k$-dimensional Hausdorff  measure.  For any open, locally
subanalytic   subset $U\subset \bR^n$ we  denote by $[S,\ori_S]\cap
U$  the   current $[S\cap U, \ori_S]$.

For any  locally subanalytic subset $X\subset \bR^n$ we denote by
$\eC_k(X)$ the    Abelian subgroup of $\Omega_k(\bR^n)$
generated  by currents of the form $[S,\ori_S]$,  as above, where
$\cl(S)\subset X$. The above operation $[S,\ori_S]\cap U$, $U$ open
subanalytic extends to a morphism of  Abelian groups
\[
\eC_k(X)\ni T\mapsto T\cap U\in\eC_k(X\cap U).
\]
We will refer to the elements of $\eC_k(X)$ as \emph{subanalytic
(integral) $k$-chains} in $X$.

Given compact subanalytic sets $A\subset X\subset \bR^n$ we set
\[
\eZ_k(X,A)=\bigl\{ T\in \eC_k(\bR^n);\;\;\supp T\subset X,\;\;\supp
\pa T\subset A\,\bigr\},
\]
and
\[
\eB_k(X, A)=\bigl\{ \pa T + S;\;\;T\in \eZ_{k+1}(X,A)), \;\;S\in
\eZ_k(A)\,\bigr\}.
\]
We set
\[
\eH_k(X,A):=\eZ_k(X,A)/\eB_k(X,A).
\]
R. Hardt has proved in \cite{Hardt2} that the assignment
\[
(X,A)\longmapsto \eH_\bullet(X,A)
\]
satisfies the Eilenberg-Steenrod   homology axioms with
$\bZ$-coefficients. This implies   that $\eH_\bullet(X,A)$
is naturally isomorphic  with the integral homology  of the pair.

To describe the intersection theory of subanalytic chains  we need
to recall a  fundamental result of R. Hardt, \cite[Theorem
4.3]{Hardt}.  Suppose $E_0, E_1$ are two oriented  real Euclidean
spaces of dimensions $n_0$ and respectively $n_1$,  $f:E_0\ra E_1$
is a real analytic map, and $T\in \eC_{n_0-c}(E_0)$ a subanalytic
current of codimension $c$.  If $y$ is a regular value of $f$,  then
the fiber $f^{-1}(y)$  is  a submanifold  equipped with a natural
coorientation and thus defines a subanalytic  current $[f^{-1}(y)]$
in $E_0$  of codimension $n_1$, i.e., $[f^{-1}(y)]]\in
\eC_{d_0-d_1}(E_0)$. We would like to define the intersection of $T$
and $[f^{-1}(y)]$  as a subanalytic current  $\lan T, f, y\ran\in \eC_{n_0-c-n_1}(E_0)$.     It  turns out that  this
is possibly quite often, even in cases when $y$ is  not a regular
value.

\begin{theorem}[Slicing Theorem]  Let $E_0$, $E_1$, $T$ and $f$ be  as above, denote by $dV_{E_1}$ the Euclidean volume form on $E_1$,   by $\bom_{n_1}$ the volume of the unit ball in $E_1$, and set
\[
\eR_f(T):=\bigl\{ y\in E_1;\;\codim (\supp T )\cap f^{-1}(y) \geq
c+ n_1,\;\codim (\supp \pa T )\cap f^{-1}(y)\geq
c+n_1+1\,\bigr\}.
\]
For  every $\ve>0$ and $y\in E_1$ we define $T\bullet_\ve
f^{-1}(y)\in \Omega_{n_0-c-n_1}(E_0)$ by
\[
\bigl\lan\, \alpha, T\bullet_\ve f^{-1}(y)\,\bigr\ran
:=\frac{1}{\bom_{n_1}\ve^{n_1}}\bigl\lan\, (f^*dV_{E_1})\wedge\alpha ,
T\cap \bigl(\,f^{-1}(B_\ve(y)\,\bigr)\,\bigr\ran,\;\;\forall
\alpha\in \Omega^{n_0-c-n_1}_{cpt}(E_0).
\]
Then  for every $y\in \eR_f(T)$, the currents $T\bullet_\ve
f^{-1}(y)$ converge weakly as $\ve>0$   to a  subanalytic  current
$\lan T, f,y\ran\in \eC_{n_0-c-n_1}(E_0)$ called  the
\emph{$f$-slice} of $T$ over $y$. Moreover,   the map
\[
\eR_f\ni y\mapsto \lan T, f, y\ran\in \eC_{d_0-c-d_1}(\bR^n)
\]
is continuous in the   locally   flat  topology.\qed
\label{th: slice}
\end{theorem}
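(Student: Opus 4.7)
The plan is to combine the triangulability and piecewise-smoothness properties of the tame structure with the classical coarea formula, reducing the statement to fiber integration on smooth strata and then gluing. First I would invoke the triangulability of subanalytic sets, together with piecewise-smoothness applied to $f$, to produce a finite subanalytic stratification $\{S_\alpha\}$ of $\supp T\cup \supp\pa T$ such that (i) each $S_\alpha$ is an oriented $C^1$ subanalytic submanifold of $E_0$, (ii) the restriction $f|_{S_\alpha}$ has constant rank, and (iii) $T$ can be expressed as an integer combination $T=\sum n_\alpha [S_\alpha,\ori_{S_\alpha}]$ of the top-dimensional strata carried by $\supp T$. This decomposition reduces the problem to analyzing the approximating currents $[S_\alpha,\ori_{S_\alpha}]\bullet_\ve f^{-1}(y)$ one stratum at a time.

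For a single smooth stratum $S_\alpha$ the coarea formula gives, for any test form $\beta\in\Omega^{n_0-c-n_1}_{cpt}(E_0)$,
\[
\bigl\lan (f^*dV_{E_1})\wedge\beta,\,[S_\alpha,\ori_{S_\alpha}]\cap f^{-1}(B_\ve(y))\bigr\ran=\int_{B_\ve(y)}\left(\int_{f^{-1}(y')\cap S_\alpha}\beta\right)dV_{E_1}(y').
\]
Dividing by $\bom_{n_1}\ve^{n_1}$ and letting $\ve\searrow 0$, the Lebesgue differentiation theorem identifies the limit, at $y\in\eR_f(T)$, with $\int_{f^{-1}(y)\cap S_\alpha}\beta$. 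The codimension condition $\codim (\supp T\cap f^{-1}(y))\geq c+n_1$ forces $f^{-1}(y)\cap S_\alpha$ to be a subanalytic set of dimension at most $n_0-c-n_1$, and on the open subset where $y$ is a regular value of $f|_{S_\alpha}$ this fiber is a $C^1$ submanifold naturally cooriented by combining $\ori_{S_\alpha}$ with the coorientation inherited from $f$. Summing the contributions over $\alpha$ with multiplicities $n_\alpha$ defines a subanalytic current $\lan T,f,y\ran\in\eC_{n_0-c-n_1}(E_0)$. The parallel codimension bound on $\supp\pa T$ guarantees the analogous statement for the boundary, so $\pa\lan T,f,y\ran=(-1)^{n_1}\lan \pa T,f,y\ran$ in the sense of subanalytic chains.

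Flat continuity of $y\mapsto\lan T,f,y\ran$ would follow from uniform mass control together with a stratum-by-stratum continuity argument. The uniform volume bounds for proper tame maps supply a constant $C$ such that $\mathbf{M}(\lan T,f,y\ran)\leq C$ uniformly on $\eR_f(T)$. On each open stratum where the rank of $f$ is constant and the slice fibers form a subanalytic family, continuity in the flat norm is the classical statement about parametrized families of integral currents; a finite-stratum gluing argument, using compactness of $\supp T$ and the finiteness of the stratification, promotes this to continuity on all of $\eR_f(T)$.

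The main obstacle I expect is controlling the slice at points $y\in\eR_f(T)$ that fail to be regular values of $f|_{S_\alpha}$. At such $y$ the naive fiber acquires lower-dimensional singular pieces, and showing that these contribute zero to the averaged limit requires a quantitative comparison between the volume of $f^{-1}(B_\ve(y))\cap (\text{singular locus})$ and $\bom_{n_1}\ve^{n_1}$. Tameness is decisive here: the singular locus is a subanalytic set of \emph{strictly} smaller dimension than $n_0-c$, so the uniform volume bounds force its contribution to vanish as $\ve\searrow 0$, and the candidate limit current depends subanalytically on $y$ because it is cut out of a jointly subanalytic family.
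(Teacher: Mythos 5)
The paper does not actually prove this statement: Theorem \ref{th: slice} is recalled verbatim, with a terminal \qed, from Hardt's slicing theory (\cite[Theorem 4.3]{Hardt}), so there is no internal proof to compare yours against. Your outline follows the standard skeleton of that theory --- tame stratification of $\supp T$, the coarea/slicing identity on each stratum, differentiation of the averaged integrals, and gluing --- and the coarea step itself is fine (strata on which $f$ has rank less than $n_1$ contribute nothing to either side because $f^*dV_{E_1}$ vanishes there). But two steps conceal genuine gaps.

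First, the Lebesgue differentiation theorem yields convergence of $T\bullet_\ve f^{-1}(y)$ only for \emph{almost every} $y$, whereas the theorem asserts convergence at \emph{every} $y\in\eR_f(T)$; the codimension conditions defining $\eR_f(T)$ are precisely the device that singles out the admissible points, and establishing convergence at each such point, including critical values of $f|_{S_\alpha}$ that happen to satisfy the dimension bound, is the real content of the result. Your proposed fix (the singular locus has strictly smaller dimension, so its contribution vanishes) controls the measure of the set of bad $y$'s, not the behavior of the $\ve$-averages at a fixed admissible $y$. What is actually needed is that the tame function $y'\mapsto \int_{f^{-1}(y')\cap S_\alpha}\beta$ is bounded near $y$ (by the uniform volume bounds) and, by local triviality of tame maps over a suitable stratification of $E_1$, has a well-defined limit along each stratum adjacent to $y$, so that the averages over $B_\ve(y)$ converge and the limit can be identified with a subanalytic current supported on $f^{-1}(y)\cap\supp T$. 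Second, the continuity in the locally flat topology cannot be delegated to ``the classical statement about parametrized families of integral currents'': that statement is essentially what is being proved. The standard route bounds the flat distance between $\lan T,f,y\ran$ and $\lan T,f,y'\ran$ by the mass of the slab $T\cap f^{-1}$ of a small region joining $y$ to $y'$ together with boundary terms, which requires the commutation $\pa\lan T,f,y\ran=\pm\lan \pa T,f,y\ran$ and the uniform volume estimates; none of this machinery is set up in your sketch. As written, the proposal is a reasonable road map through Hardt's argument, but not a proof.
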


\section{Normal cycles of subanalytic  sets}
\label{s: c}
\setcounter{equation}{0}
We follow the  presentation in \cite{LC}. Let $\bsV$ be an oriented real Euclidean  vector space of dimension  $n$. Denote by $\bsV\dual$ its  dual,  and by $\Sigma\dual$ the unit sphere in $\bsV\dual$.  We identify  the cotangent bundle $T^*\bsV$ with the product $\bsV\dual\times \bsV$.    We have two canonical projections
 \[
 p: \bsV\dual\times\bsV\ra \bsV\dual,\;\;\pi: \bsV\dual\times\bsV\ra \bsV.
 \]
Let $\lan -,-\ran: \bsV\dual\times \bsV\ra   \bR$ denote   the canonical pairing
 \[
 \bsV\dual\times \bsV\ni (\xi, x)\mapsto  \lan\xi, x\ran:= \xi(x)\in\bR.
 \]
The Euclidean  metric $(-,-)$ on $\bsV$  defines isometries (the classical lowering/raising the indices operations)
 \[
 \bsV\ni x\mapsto x_\dag\in \bsV\dual,\;\;\bsV\dual\ni \xi\mapsto\xi^\dag\in\bsV,
 \]
 \[
 \lan x_\dag,  y\ran =(x,y),\;\; \lan \xi, y\ran =(\xi^\dag, y),\;\;\forall x,y\in\bsV,\;\;\xi\in\bsV\dual.
 \]
  Let $\alpha\in\Omega^1(T^*\bsV)$ denote the canonical $1$-form on the cotangent bundle.  More explicitly, if $x^1,\dotsc, x^n$
 are Euclidean  coordinates on $\bsV$, and $\xi_1,\dotsc, \xi_n$ denote the induced Euclidean coordinates on $\bsV\dual$, then
 \[
 \alpha=\sum_i \xi_i dx^i.
 \]
We denote by $\omega\in \Omega^2(T^*\bsV)$ the associated symplectic form
\[
\omega=-d\alpha=\sum_i dx^i\wedge d\xi_i.
\]

For any closed subanalytic subset $X\subset \bsV\dual\times \bsV$ we denote  by $\eC_k(X)$   the Abelian group of  subanalytic, $k$-dimensional currents  with support on $X$; see  Appendix \ref{s: b}. If $S\in \eC_k(\Sigma\dual\times \bsV)$, and $\xi\in \Sigma\dual$, we denote by $S_\xi$ the $p$-slice of $S$ over $\xi$,
 \[
 S_\xi:=\lan S, p, \xi\ran\in \eC_{k-\dim\Sigma\dual}(\Sigma\dual\times \bsV)
 \]
 As explained in Appendix \ref{s: b}, the slice  $S_\xi$  exists for all $\xi$ outside a codimension $1$ subanalytic subset of $\Sigma\dual$ and it is supported on the fiber $p^{-1}(\xi)\cap\supp S$.    More precisely, $S_\xi$ is well defined if the  fiber $p^{-1}(\xi)\cap \supp S$ has the expected  dimension, $\dim S-\dim \Sigma\dual$.
  
   If $S$   is  the current of integration along an oriented $k$-dimensional  manifold, then for generic $\xi$ the slice $S_\xi$ is the current of integration along the fiber $S\cap p^{-1}(\xi)$ equipped with a canonical orientation.  In general, the slice gives a precise meaning   as a current to the intersection of $S$ with the fiber $p^{-1}(\xi)$, provided that this intersection has the ``correct'' dimension.

If $X\subset \bsV$ is a compact subanalytic set,  $\xi\in \Sigma\dual$,   and $x\in X$ we set
\[
X_{\xi>\xi(x)}:=\bigl\{  y\in X;\;\;\xi(y)>\xi(x)\,\bigr\},\;\; i_X(\xi, x):=1-\lim_{r\searrow 0}\chi\bigl(\, B_r(x)\cap X_{\xi>\xi(x)}\,\bigr),
\]
where $\chi$ denotes the Euler characteristic of a topological space. If $x\in \bsV\setminus X$ we set $i_X(\xi,x)=0$.  For generic $\xi\in\Sigma\dual$,  we have $i_X(\xi,x)=0$, for all but finitely many points $x\in  X$.

We have the following existence and uniqueness result due to   J. Fu,  \cite[Tm. 3.2]{Fu2}.

\begin{theorem} Let $X$  be a compact subanalytic subset   of $\bsV$. Then there exists exactly  one subanalytic  current $\bsN\in \eC_{n-1}(\Sigma\dual\times\bsV)$  satisfying the following conditions.

\begin{enumerate}

\item The current $\bsN$ is a cycle, i.e., $\pa N=0$.

\item  The current $\bsN$ has compact support.

\item The current $\bsN$ is Legendrian, i.e.,
\[
\lan \alpha\cup \eta,  \bsN\ran=0,\;;\forall \eta\in \Omega^{n-2}\bigl(\, \Sigma\dual\times\bsV\,\bigr).
\]

\item   For any smooth function   $\vfi\in C^\infty(\Sigma\dual\times\bsV)$   we have
\begin{equation}
\lan \vfi dV_{\Sigma\dual}, N\ran=\int_{\Sigma\dual} \Bigl(\sum_{x\in X}  \vfi(\xi,x) i_X(\xi,x)\Bigr)\,dV_{\Sigma^\dual}.
\label{eq: morse-sl}
\end{equation}
\end{enumerate}
\label{th: fu1-norm}
\end{theorem}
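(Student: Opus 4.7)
The plan is to prove uniqueness first by a Legendrian rigidity argument, then existence by an explicit construction using a subanalytic Whitney stratification of $X$.

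For uniqueness, suppose $\bsN$ and $\bsN'$ both satisfy (1)--(4), and set $D:=\bsN-\bsN'$. Then $D$ is a compactly supported Legendrian cycle of dimension $n-1$ in $\Sigma\dual\times \bsV$ whose $p$-slices $D_\xi$ have total ``index mass'' zero for almost every $\xi$, by (\ref{eq: morse-sl}). I would first show, via the local triviality of subanalytic maps applied to $p:\supp D\to\Sigma\dual$, that for generic $\xi$ the slice $D_\xi$ is supported on a finite set of points $(\xi,x_1),\dotsc,(\xi,x_N)$ and that its multiplicity at $(\xi,x)$ equals the Morse index $i_Y(\xi,x)$ associated to an underlying constructible function $\mathbf{1}_Y$ (this is the content of Fu's Morse-theoretic interpretation of Legendrian cycles). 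The hypothesis (\ref{eq: morse-sl}) applied to an arbitrary $\vfi$ on $\Sigma\dual\times\bsV$ forces all these multiplicities to vanish generically, hence $D_\xi=0$ for a.e.\ $\xi$; since the slice map is continuous in the flat topology by the Slicing Theorem~\ref{th: slice}, one concludes that $D=0$ by integrating slices back up (an application of the coarea/homotopy formula (\ref{eq: homotop}) combined with the Legendrian condition $\alpha\cap D=0$).

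For existence, I would proceed by induction on the dimension of $X$ using a subanalytic Whitney stratification $X=\bigsqcup_\sigma X_\sigma$. For each open stratum $X_\sigma$ of dimension $k$, one has a well-defined conormal bundle $\nu^*X_\sigma\subset T^*\bsV$ whose unit normalization $\hat{\nu}X_\sigma\subset \Sigma\dual\times\bsV$ is a locally subanalytic $(n-1)$-manifold. Define the candidate normal cycle by
\[
\bsN:=\sum_{\sigma} m_\sigma\,[\hat{\nu}X_\sigma,\ori_\sigma],
\]
where $m_\sigma\in\bZ$ is a multiplicity determined inductively so that the boundary contributions cancel along lower-dimensional strata. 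The multiplicities are forced upon us by the Morse-index formula (\ref{eq: morse-sl}): evaluating both sides on a stratum-respecting test function yields a linear system with a unique integer solution, namely the ``link Euler characteristics'' of the stratification. With these multiplicities, I would verify property (1) using the Whitney condition (which controls how normal directions degenerate as one passes from $X_\sigma$ to an adjacent stratum $X_\tau$), property (2) from compactness of $X$, and property (3) from the fact that each $\hat{\nu}X_\sigma$ is Legendrian in the standard contact structure on $\Sigma\dual\times\bsV$ (the restriction of $\alpha$ vanishes on conormal directions). Property (4) reduces, via the Slicing Theorem, to a fiberwise calculation where each $i_X(\xi,x)$ is read off as an alternating sum of local Euler characteristics of links, matching our choice of $m_\sigma$ by construction.

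The main obstacle will be showing $\pa\bsN=0$ across lower-dimensional strata. Away from the closed stratum this is a local computation in a Whitney chart, but at deepest singular points one needs a global argument: I would use the fact that the ``link pair'' $(L_x,L_x\cap X)$ of each stratum has a well-defined $o$-minimal Euler characteristic $\chio$, and that the defining inductive relations for $m_\sigma$ correspond to saying that these link Euler characteristics sum to zero in a way that matches the boundary of the conormal chain. Concretely, one compactifies $\hat{\nu}X_\sigma$ and reads off its boundary as a sum of conormal directions along incident strata, using the triangulability of subanalytic sets to reduce to a finite combinatorial identity --- ultimately a subanalytic version of the formula $\partial (\text{conormal of an open simplex})=\sum\pm(\text{conormal of incident faces})$. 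This combinatorial/homological bookkeeping is where the bulk of the technical difficulty lies; once it is settled, properties (2)--(4) are comparatively direct consequences of the tame geometry summarized in Appendix~\ref{s: b}.
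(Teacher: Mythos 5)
You should first be aware that the paper does not prove this statement at all: Theorem \ref{th: fu1-norm} is quoted verbatim from J.~Fu \cite[Thm.~3.2]{Fu2} as background material in Appendix \ref{s: c}, so there is no in-paper argument to compare against. Your sketch follows the stratified ``conormal cycle'' route (sum of conormal bundles of Whitney strata weighted by local Euler characteristics of links), which is a legitimate and well-known construction --- essentially the characteristic cycle of the constructible function $\mathbf{1}_X$ in the sense of Kashiwara--Schapira \cite{KaSch}, later carried out for definable sets by Bernig \cite{Ber} --- but it is not Fu's original route, which constructs $\bsN^X$ as a limit of normal cycles of sublevel sets $\{f\leq \ve\}$ of a subanalytic ``aura'' for $X$ and deduces uniqueness from his general uniqueness theorem for compactly supported Legendrian cycles with prescribed slices.

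As a proof, however, your proposal has two genuine gaps, and they are exactly where the entire content of the theorem lives. First, in the uniqueness argument, the step ``$D_\xi=0$ for a.e.\ $\xi$ implies $D=0$'' is not a consequence of flat-continuity of slices together with the homotopy formula (\ref{eq: homotop}); a current is in general very far from being determined by its slices under a single map, and what is actually needed is Fu's uniqueness theorem for integral Legendrian cycles, whose proof uses the constancy theorem and the rectifiable structure of $D$ in an essential way. You have named the right hypotheses (Legendrian, compact support, vanishing slices) but the implication is a theorem, not a computation. Second, in the existence argument, the assertion $\pa\bsN=0$ for the weighted sum of conormal chains is deferred to ``combinatorial/homological bookkeeping,'' but this cancellation of boundary contributions along lower-dimensional strata --- controlled by the Whitney conditions and the local Euler characteristics of links --- is the main technical obstacle of the whole construction and cannot be reduced to the simplicial identity you invoke without substantial work (one must show the conormal varieties of the strata fit together as a Lagrangian/Legendrian cycle, which is where the subanalytic structure and the slicing theory of Hardt enter irreplaceably). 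Both gaps are fixable by citing the relevant results of Fu, but then the argument collapses to the citation the paper already gives.
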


\begin{remark}  Using \cite[Thm. 4.3.2.(1)]{Feder} we deduce that the  equality   (iv) is equivalent with the  condition
\begin{equation*}
\bsN_\xi= \sum_{x\in X} i_X(\xi,x)\delta_{(\xi,x)},\;\;\mbox{for almost all $\xi\in\Sigma\dual$},
\tag{$\ast$}
\label{tag: ast}
\end{equation*}
where $\delta_{(\xi,x)}$ denotes the  canonical $0$-dimensional current determined by the point $(\xi,x)$. The points $x$ for  which $i(x,\xi)\neq 0$ should be viewed as  critical points of the function $-\xi: X\ra \bR$; see \cite[\S 5.4]{KaSch}. Thus, the slice  $N_\xi$   records  both the collection of critical points of $-\xi|_X$ and  their Morse indices.    \qed
\end{remark}

\begin{definition} The cycle $\bsN$  whose existence and uniqueness is postulated  by Theorem \ref{th: fu1-norm} is called the \emph{normal cycle} of the  compact subanalytic set $X$ and it is denoted by $\bsN^X$. Using the  metric identification between the unit sphere in $\bsV\dual$ and the unit sphere $S(\bsV)\subset \bsV$ we will think of $\bsN^X$ as a $(n-1)$-dimensional cycle on $S(\bsV)\times \bsV$.\qed
\end{definition}

\begin{ex} (a) If  $X$ is a  compact smooth  submanifold of $\bsV$, then  $\bsN^X$  can be identified with the integration current defined by the total space of the unit sphere bundle associated to the normal bundle of the embedding $X\hra \bsV$.

\smallskip

\noindent (b) If $X$ is a bounded domain in $\bsV$ with sufficiently regular boundary $\pa X$, then  we   have a unit outer normal vector field 
\[
\bn:  \pa X\ra  S(\bsV)
\]
and the   normal cycle  $\bsN^X$  is the integration current defined by  the graph of the above map.

\smallskip

\noindent(c)  If $f: \bsV\ra [0,\infty)$ is a proper, $C^2$, subanalytic function, then the normal cycle  of  of the sublevel set $\{f\leq \ve\}$ converges as $\ve\ra 0$ to the normal cycle  of the level set $\{f=0\}$.

\smallskip  

\noindent (d) For a very intuitive description of the normal cycle of a  compact $PL$ subset we refer to \cite{CMS, Win}.
\qed
\label{ex: normal}
\end{ex}

One can show that the map that associates  to a compact subanalytic set  its normal cycle   is injective; see \cite{Ber,LC}. This means that   a compact  subanalytic set is completely determined by its normal cycle.       The actual  reconstruction process is based on  a ``motivic'' Radon transform.

One very useful property of normal  cycles is the   \emph{inclusion-exclusion property}, \cite[Thm. 4.2]{Fu2}, \cite[\S 4]{LC}
\begin{equation}
\bsN^{X\cup Y}=\bsN^X +\bsN^Y-\bsN^{X\cap Y},
\label{eq: incl-excl}
\end{equation}
for any compact  subanalytic sets $X$, $Y$.    

The notion of    normal cycle is closely related to the concept of \emph{curvature measure}.    Let us  observe that  the cotangent bundle  is   equipped with several canonical $SO(\bsV)$-invariant $n$-forms.  To describe them  fix an oriented    orthonormal basis  $(\bse_1,\dotsc,\bse_n)$ of $\bsV$.  Denote by $(x^1,\dotsc, x^n)$ the associated Euclidean coordinates, and by $\xi_1,\dotsc, x_n$ the dual coordinates  on  $\bsV\dual$. For $t>0$ we define
\[
\Bom_{\bsV,t}= (dx^1+d\xi_1)\wedge \cdots \wedge (dx^n +t d\xi_n)\in \Omega^n(\bsV\dual \times \bsV).
\]
Set
\[
\rho:=\sqrt{\xi_1^2+\cdots +\xi_n^2}
\]
and denote by $\pa_\rho$ the radial vector field on $(\bsV\dual\setminus 0)\times \bsV$
\[
\pa_\rho=\frac{1}{\rho} \sum_j \xi_j\pa_{\xi_j}.
\]
We set
\[
s_k=\pa_\rho\xi_k=\pa_\rho\inpr ds_k,\;\;k=1,\dotsc, n.
\]
On $(\bsV\dual\setminus 0)\times \bsV$ we have
\[
\frac{1}{t}\pa_\rho\inpr \Bom_{\bsV,t}=\pa_\rho\inpr\bigwedge_{j=1}^n (dx^j+ td\xi_j)=\pa_\rho\inpr\bigwedge_{j=1}^n \bigl(\,dx^j+ td(\rho s_j)\,\bigr)
\]
\[
=\sum_k (-1)^{k-1} s_k \bigwedge_{j\neq k} (dx^j+td\xi_j) =\sum_k (-1)^{k-1} s_k \bigwedge_{j\neq k}(dx^j +t\rho ds_j +ts_jd\rho )
\]
We denote by $\eta_{\bsV,t}$ the restriction of  $\frac{1}{t}\pa_\rho\inpr \Bom_{\bsV,t}$ to $\Sigma\dual\times \bsV$.  Along this manifold we have $\rho=1$ and we deduce
\[
\eta_{\bsV,t}=\sum_k (-1)^{k-1} s_k \bigwedge_{j\neq k}\bigl(\, dx^j +tds_j \bigr)=:\sum_{j=0}^{n-1}t^j\eta_{n-1-j}
\]
We denote by $\bom_k$ the volume of the unit $k$-dimensional ball, and by $\bsi_{k-1}$ the area of its boundary. Then
\begin{equation}
\bom_k=\frac{\pi^{\frac{k}{2}}}{\Gamma\bigl(\frac{k}{2}+1\bigr)},\;\;\bsi_{k-1}= k\bom_k.
\label{eq: vol}
\end{equation}
We  set
\[
\hat{\eta}_k:=\frac{1}{\bsi_{n-1-k}}\eta_k,\;\;k=0,\dotsc, n-1
\]
Using the metric identification between $\bsV$ and $\bsV\dual$ we will think of the forms $\hat{\eta}_k$ as forms on $S(\bsV)\times \bsV$, the unit sphere bundle associated to the tangent bundle of $\bsV$.

If $X$ is  compact  subanalytic set, then the quantities 
\[
\lambda_k(X):=\lan \hat{\eta}_k, \bsN^X\ran
\]
are called the \emph{curvature measures} of $X$.  We have the celebrated    \emph{Weyl tube formula}, \cite{Weyl}.

\begin{theorem}  If  $X\subset \bsV$ is a  smooth compact submanifold  of $\bsV$ of dimension $m$ and $ T_r(X)$ denotes the tube of radius $r$ around $X$,
\[
T_r(X):=\bigl\{\, v\in\bsV;\;\;\dist(v,X)\leq r\,\bigr\}
\]
then
\[
\begin{split}
{\rm vol}\, (T_r(X)) &= \sum_{k=0}^{m}  \lambda_k(X) \bom_{n-k} r^{n-k}\\
& = \lambda_{m}(X) \bom_{n-m} r^{n-m}+\lambda_{m-1}(X) \bom_{n-m+1}r^{n-m+1}+\cdots.
\end{split}
\]
Moreover
\[
\lambda_m(X)= {\rm vol}\,(X),\;\;\lambda_0(X)=\chi(X).
\]
In general, $\lambda_{m-2k-1}(X)=0$, while $\lambda_{m-2k}(X)$ can be expressed as the integral with respect to the Riemannian volume on $X$ of an universal polynomial of degree $k$ in the curvature of the induced metric on $X$.\qed
\label{th: weyl}
\end{theorem}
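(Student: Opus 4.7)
My plan is to parameterize the tube $T_r(X)$ by the unit normal bundle of $X$ and identify the resulting volume integrand with the family of forms $\eta_k$ introduced just before the theorem. Choose $r$ smaller than the reach of $X$, which is positive since $X$ is smooth and compact. Then
\[
\Psi: S(\nu X)\times (0,r]\ra T_r(X)\setminus X,\qquad (x,u,t)\mapsto x+tu
\]
is a diffeomorphism onto a full-measure subset of $T_r(X)$, so the volume can be computed by pushing forward the standard volume form along $\Psi$.

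The first serious step is the Jacobian computation. Pick a local orthonormal frame $(\bse_1,\dotsc,\bse_m)$ tangent to $X$ and $(\bse_{m+1},\dotsc,\bse_n)$ spanning $\nu X$, and write $u=\sum_{j>m}s_j\bse_j$. Using the Weingarten/structure equations  one checks that on $S(\nu X)\times[0,r]$,
\[
\Psi^*(dx^1\wedge\cdots\wedge dx^n)=dt\wedge \eta_{\bsV,t}\bigr|_{S(\nu X)},
\]
where $\eta_{\bsV,t}$ is precisely the form from the excerpt: the tangential and normal contributions to $d(x+tu)$ assemble into the wedge factors $(dx^j+td\xi_j)$ after identifying the adapted frame with the canonical coordinates on $T^*\bsV$ via the metric. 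Expanding $\eta_{\bsV,t}=\sum_{j=0}^{n-1}t^j\eta_{n-1-j}$, integrating $t$ over $[0,r]$, and using Example \ref{ex: normal}(a) which identifies $\bsN^X$ with the integration current on $S(\nu X)$ gives
\[
{\rm vol}(T_r(X))=\sum_{j=0}^{n-1}\frac{r^{j+1}}{j+1}\lan\eta_{n-1-j},\bsN^X\ran.
\]
Each $dx^i$-factor in $\eta_k$ must pair with a tangent direction of $X$, so $\eta_k|_{S(\nu X)}=0$ for $k>m$. Reindexing $k=n-1-j$ and combining with $\bsi_{n-1-k}=(n-k)\bom_{n-k}$ from (\ref{eq: vol}) and the definition $\lambda_k(X)=\lan\hat\eta_k,\bsN^X\ran$ yields
\[
{\rm vol}(T_r(X))=\sum_{k=0}^{m}\bom_{n-k}r^{n-k}\lambda_k(X),
\]
which is the desired expansion.

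For the identifications, direct inspection shows $\eta_0=dV_{\Sigma\dual}$, so equation (\ref{eq: morse-sl}) with $\vfi\equiv 1$ combined with the classical Morse identity $\sum_x i_X(\xi,x)=\chi(X)$ valid for generic $\xi$ gives $\lambda_0(X)=\chi(X)$. At the other extreme, $\eta_m$ is purely tangential in its $dx^i$-factors, so its fiber integral over the unit normal sphere $S(\nu_xX)\cong S^{n-m-1}$ equals $\bsi_{n-1-m}\,dV_X$; after normalization by $\bsi_{n-1-m}$ this becomes ${\rm vol}(X)$. The vanishing $\lambda_{m-2k-1}(X)=0$ is a parity argument: the antipodal involution $u\mapsto -u$ preserves $S(\nu X)$ and hence $\bsN^X$, while $\eta_k$ picks up the sign $(-1)^{n-1-k}$ from its normal $ds_j$-factors, killing the contributions with odd normal codimension. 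Pushing $\eta_{m-2k}$ down along the fibration $S(\nu X)\to X$ produces, on each fiber, a degree-$k$ symmetric polynomial in the entries of the second fundamental form — the Lipschitz–Killing integrand of Weyl's original formula.

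The main obstacle is the Jacobian identification $\Psi^*dV_{\bsV}=dt\wedge\eta_{\bsV,t}|_{S(\nu X)}$ in step two: it requires carefully matching the covariant derivatives $d\bse_a=\sum_b\omega_a^b\bse_b$ of the adapted moving frame against the algebraic wedge structure of $\Bom_{\bsV,t}$, isolating the terms that survive restriction to the unit normal sphere. Once this identification is in hand, the remaining claims reduce to symmetric-function bookkeeping, a single fiber integration along the sphere bundle, and one invocation of Morse theory through (\ref{eq: morse-sl}).
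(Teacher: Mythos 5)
The paper does not prove Theorem \ref{th: weyl}: it is quoted as Weyl's classical tube formula (with a reference to \cite{Weyl}) and the statement ends in \qed with no argument supplied, so there is nothing internal to compare your proposal against. What you have written is the standard modern derivation in the normal-cycle framework (as in Fu and Morvan), and its skeleton is sound: for $r$ below the reach the map $(x,u,t)\mapsto x+tu$ parametrizes the punctured tube, and writing $\xi=tu$ in $\Bom_{\bsV,1}=\bigwedge_j(dx^j+d\xi_j)$ and collecting the terms containing $dt$ gives exactly $dt\wedge\eta_{\bsV,t}$ restricted to the sphere bundle, while the $dt$-free terms die on the $(n-1)$-dimensional slices; so the Jacobian identification you flag as the main obstacle is in fact an almost purely algebraic substitution and does not require the Weingarten equations (those only enter when you push $\eta_{m-2k}$ down to $X$ to recognize the Lipschitz--Killing integrand). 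The reindexing via $\bsi_{n-1-k}=(n-k)\bom_{n-k}$, the vanishing of $\eta_k|_{S(\nu X)}$ for $k>m$, and the identifications $\lambda_0=\chi$ (via (\ref{eq: morse-sl}) and the Morse equality) and $\lambda_m=\mathrm{vol}$ (via fiber integration) are all correct.

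The one place where your bookkeeping is off is the parity argument for $\lambda_{m-2k-1}(X)=0$. The form $\eta_k$ carries $n-1-k$ factors of $ds_j$ \emph{and} one factor of the coefficient $s_k$, so the antipodal map pulls it back to $(-1)^{n-k}\eta_k$, not $(-1)^{n-1-k}\eta_k$; moreover the antipodal map does not fix the current $\bsN^X$ but pushes it forward to $(-1)^{n-m}\bsN^X$, since it has degree $(-1)^{n-m}$ on the fiber $S^{n-m-1}$. Combining the two correct signs gives $\langle\eta_k,\bsN^X\rangle=(-1)^{m-k}\langle\eta_k,\bsN^X\rangle$, which vanishes precisely when $m-k$ is odd, as required. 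As literally written, your version omits both corrections; they happen to cancel only when the codimension $n-m$ is odd, so for even codimension your stated signs would kill the wrong alternate terms. Fix that one computation and the outline is a complete and correct (if standard) proof.
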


In general, for any  compact  subanalytic set $X$ we have
\[
\lambda_0(X)=\chi(X).
\]
\begin{ex} Suppose $\dim \bsV=2$.  Using polar coordinates  $\rho,\theta$ in the plane $\bsV\dual$ we have
\[
\xi_1= \rho\cos\theta,\;\;\xi_2=\rho\sin\theta
\]
\[
d\xi_1= \cos\theta d\rho-\rho\sin\theta d\theta,\;\;d\xi_2= \sin\theta d\rho+\rho\cos\theta d\theta,
\]
\[
\Bom_{\bsV,t}= (dx^1+ t\cos\theta d\rho-t\rho\sin\theta d\theta)\wedge (dx^2+  t\sin\theta d\rho+t\rho\cos\theta d\theta)
\]
\[
=dx^1\wedge dx^2+ t(\cos\theta d\rho-\rho\sin\theta d\theta)\wedge dx^2+ tdx^1\wedge (\sin\theta d\rho+\rho\cos\theta d\theta) + t^2 \rho d\rho \wedge d\theta,
\]
\[
\frac{1}{t}\pa_\rho \Bom_{\bsV,t}= \cos\theta dx^2-\sin\theta dx^1  + t\rho d\theta,
\]
\[
\eta_{\bsV,t}:= \cos\theta dx^2-\sin\theta dx^1+ td\theta,
\]
\[
\eta_0=d\theta,\;\;  \hat{\eta}_0=\frac{1}{\bsi_1}d\theta=\frac{1}{2\pi}d\theta,
\]
\[
\eta_1=\cos\theta dx^2-\sin\theta dx^1,\;\;\hat{\eta}_1= \frac{1}{\bsi_0}\eta_1= \frac{1}{2}\bigl(\, \cos\theta dx^2-\sin\theta dx^1\,\bigr).
\]
Suppose now that  $X$ is a compact smooth domain with connected boundary.  We orient the boundary using the  outer-normal-first convention and we fix an arc length parametrization   of the boundary compatible  with this orientation
\[
x_1=x_1(s),\;\;x_2=x_2(s),\;\;s\in [0,L],
\]
where $L$ is the length of the boundary.  Denote by $\tau(s)$ the unit tangent vector
\[
\btau(s)=\bigl(\,x_1'(s),x_2'(s)\,\bigr)
\]
and by $\bn(s)$ the unit outer normal. The frame $(\bn(s),\btau(s))$    and, using polar coordinates, we  can write
\[
\bn(s)=\bigl(\cos \theta(s), \sin\theta(s)\,\bigr).
\]
The normal cycle  of $X$ is the current of integration given by the oriented, closed  path
\[
[0,L]\ni s\stackrel{\Phi}{\longmapsto} \bigl(\cos \theta(s), \sin\theta(s);\; x_1(s), x_2(s)\,\bigr)\in S(\bsV)\times \bsV.
\]
Since frame $(\bn(s),\btau(s))$ is positively oriented we deduce that $\btau9s)$ is obtained from $\bn(s)$ via a counterclockwise rotation by $\frac{\pi}{2}$.   This implies that
\[
x_1'(s) =-\sin \theta(s),x_2'(s)=\cos \theta(s).
\]
Hence
\[
\lan \hat{\eta}_1, \bsN^X\ran =\int_0^L \Phi^*\hat{\eta}_1=\frac{1}{2}\int_0^L \bigl(\, (x_1'(s)^2+(x_2'(s))^2\,\bigr)=\frac{1}{2}L=\frac{1}{2}\times \mbox{perimeter of $X$}.
\]
\qed
\end{ex}

\section{The Approximation Algorithm}
\label{s: d}
\setcounter{equation}{0}
In this section we describe Algorithm \ref{alg: process} loosely in terms of a computer program.  The input for this algorithm is a  pixelation $P_\ep(S)$, where $S$ is a compact set.  Since $P_\ep(S)$ is compact, it is contained within some $m\times m$ rectangle of $\ep$-pixels in the plane.  Associate to each pixel a value in $[1,m] \cap \bZ \times [1,m] \cap \bZ$ indexed as a matrix (see Figure \ref{fig: index}).  Using this labeling we can encode $P_\ep(S)$ as a $m\times m$ matrix $A$ where the entry $a_{ij} = 1$ if and only if the pixel associated to $(i,j)$ lies in $P_\ep(S)$, and is $0$ otherwise.  Throughout the algorithm we will refer to the matrix $A$ constructed in this manner.  The notation $C[i,j]$ will indicate the center of the pixel (in $P_\ep(S)$) corresponding to the entry $a_{ij}$.

\begin{figure}[h]
\centering{\includegraphics[height=2.25in,width=2.25in]{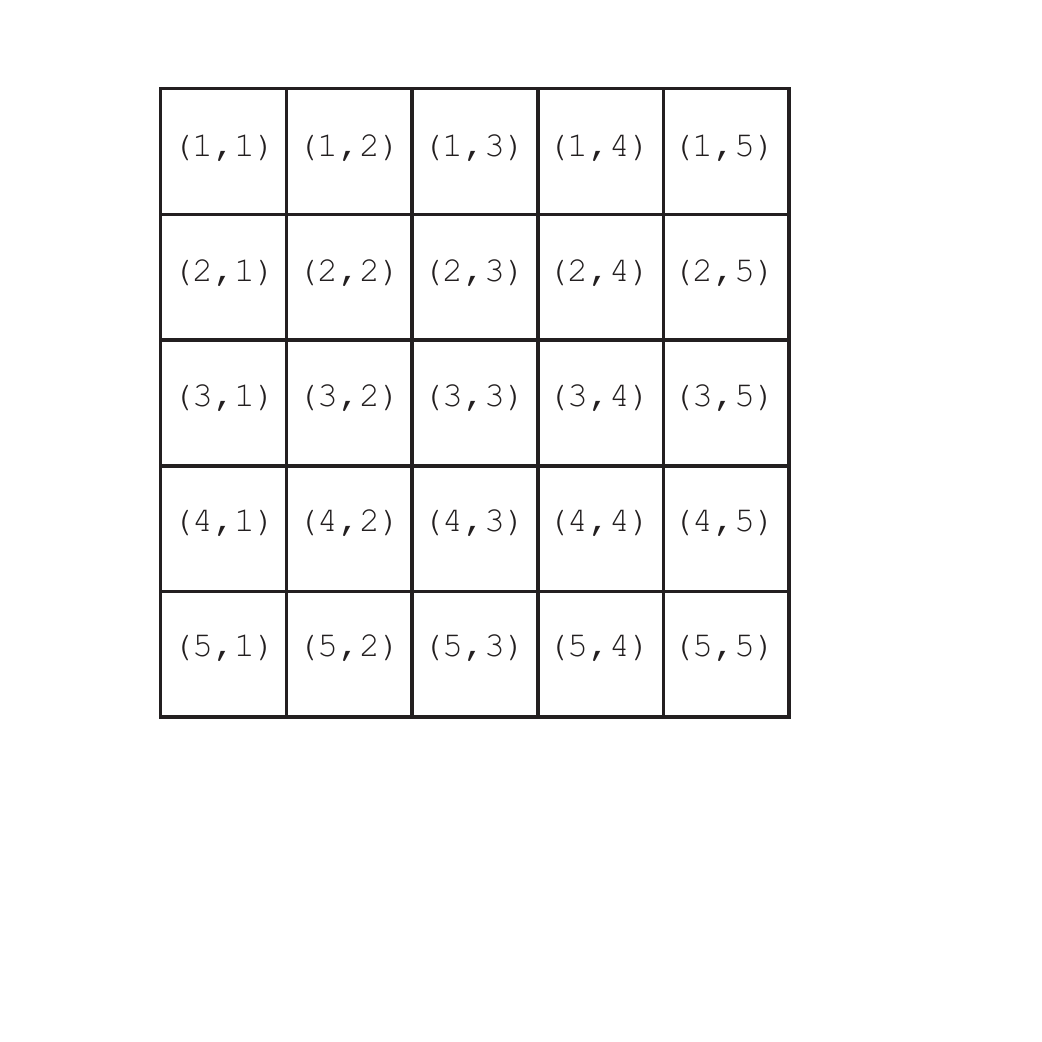}}
\caption{\sl A $5\times 5$ grid of pixels properly indexed.}
\label{fig: index}
\end{figure}

The algorithm requires a choice of spread to function.  Set
\[
\si(\ep) =\lfloor m^{r}\rfloor,
\]
where $r$ is a fixed  rational number $r\in (\frac{1}{2}, 1)$. Note that
\begin{equation}
\lim_{\ep \searrow 0} \ep (\sigma(\ep))^2 = \infty, \;\;\lim_{\ep \searrow 0} \ep \sigma(\ep) = 0.
\label{eq: spread}
\end{equation}
The  output of the algorithm will be a  $PL$ set $S_\ep$ that decomposes in a canonical  fashion as a finite union of trapezoids  with vertical bases. We will refer to such regions as \emph{polytrapezoids}.  We allow for degenerate trapezoids,  such as    points, segments, or triangles. 

\medskip

The algorithm uses  several basic subroutines.  The first one is the   the subroutine   $\stack$. Its input is a   list 
\[
C=C_1,\dotsc, C_m,\;\; C_i=0,1,
\]
which will be a column from $A$.  The output  of $\stack$  is a  list  of  nonnegative integers
\[
\bn(C); \;\;b_1\leq t_1<b_2\leq t_2<\cdots <b_{\bn(C)}\leq t_{\bn(C)},
\]
where  $\bn(C)$ is the number of stacks in the column encoded by    $C$,   and the location of the bottom and top pixel in the $j$-th stack is determined by    the integers  $b_j, t_j$. More formally
\[
C_k=1 \Llra \exists 1\leq j\leq \bn(C):\;\; b_j\leq k\leq t_j.
\]
  If  $C=C_i$, the $i$-th column  of $A$, i.e.,
  \[
  C_i= a_{i,1},\dotsc, a_{i,m}
  \]
 then we will denote the output $\stack(C_i)$ by
 \[
 \bn_i,\;\; b_{i,1}\leq t_{i,1}<\cdots <b_{i,\bn_i}\leq t_{i,\bn_i}.
 \]
 A number $1\leq i\leq m-1$ is called a \emph{jump point} if
 \[
 \bn_i\neq \bn_{i+1}.
 \]
 
 \medskip
 The next subroutine that we need is called $\jump$. Its input is an integer $k\in [1, m)$  and the  output is an integer $j_k=\jump(k)$ defined by as follows.
 If  
\[
\bigl\{ i\in [k,m)\cap\bZ;\;\; i\;\mbox{is a jump point}\,\bigr\}=\emptyset,
\]
 then we set
\[
\jump(k):= m+1.
\]
Otherwise
 \[
 \jump(k)=\min\bigl\{ i\in [k,m)\cap\bZ;\;\; i\;\mbox{is a jump point}\,\bigr\}.
 \]
The  noise region is  determined by  a finite collection of  intervals
\[
[\ell_1, r_1],\dotsc, [\ell_\alpha, r_\alpha]\subset [1,m]
\]
where the integers  $\ell_k, r_k$ are determined inductively as follows.
\[
\ell_1= \max\bigl(\, \jump(1)-2\si(\ep), 1\,\bigr),
\]
\[
 r_1=\min\bigl(\, m,  \jump(1)+2\si(\ep)\,\bigr).
\]
Suppose that $\ell_1,r_1,\dotsc, \ell_j,r_j$ are determined. If $\jump(r_j)>m$    we stop. Otherwise  we set
\[
\ell_{j+1}= \max\bigl(\, \jump(r_j)-2\si(\ep), 1\,\bigr),
\]
\[
r_{j+1}=\min\bigl(\, m,  \jump(r_j)+2\si(\ep)\,\bigr).
\]
The intervals $[\ell_1,r_1],\dotsc ,[\ell_\alpha,r_\alpha]$ may not be disjoint,  but their union is a   \emph{disjoint} union of intervals
\[
[a_1, b_1],\dotsc, [a_J,b_J],\;\; b_i<a_{i+1}.
\]
The intervals $[a_j,b_j], 1\leq j\leq J$ are the  \emph{noise intervals}.  The intervals
\[
[1,a_1], [b_1,a_2],\dotsc, [b_{J-1},a_J], [b_J, m]
\]
are  the \emph{regular intervals}.

\medskip

The heart of the algorithm consists of two  procedures, one for dealing  with the  noise intervals and and the other for dealing with the regular intervals.     These  procedures  will return a number of polytrapezoids,    

First some notation. Given   a collection of  points
\[
B_0, T_0,\dotsc, B_N, T_N\in \bR^2
\]
such that
\[
x(B_i)= x(T_i),\;\;y(B_i)\leq y(T_i),\;\;\forall i=0,\dotsc, N,
\]
\[
x(B_{j-1})<x(B_{j}),\;\;\forall 1\leq j\leq N,
\]
  we denote by $\polygon(B_0, T_0, \dotsc, B_N, T_N)$ the   region surrounded by the simple closed $PL$-curve obtained    as the union of line segments
  \[
  [B_0, B_1],\dotsc, [B_{N-1}, B_N], 
  \]
  \[
  [B_N,T_N],\dotsc, [T_1,T_0], [T_0, B_0].
  \]
 Note that  each of the quadrilaterals $B_{i-1}B_iTIT_{i-1}$ is a (possibly degenerate) trapezoid with vertical bases.
 
 \medskip
 
 Consider first the regular intervals.   Given a regular interval $I:=[p,q]$  we observe that  the number of stacks $\bn_i$ is independent of $i\in [p,q]$. We denote this shared number by $\bn=\bn(I)$.

 We  construct inductively a sequence of numbers $i_0 <\cdots < i_N$  as follows: 
 \begin{itemize}
 
 \item We set $i_0=p$. 
 
 \item If $q-p<2\si(\ep)$ we set $N=1$ and $i_1=q$. 
 
 \item If  $i_0,\dotsc, i_k$ are  already constructed, then,  if $q-i_k< 2\si(\ep)$ we set $N=k+1$ and $i_{k+1}=q$,  else $i_{k+1}=i_k+\si(\ep)$.
 \end{itemize}
 
 Note that if $q-p> \si(\ep)$, then $N\geq 1$, $i_0=p$, $i_N=q$ and
 \[
 N=1\;\;\mbox{if}\;\; q-p<\si(\ep).
 \]
 We have
 \[
 \stack(C_{i_k}) = \bn, \;\;b_{i_k,1}, t_{i_k,1},\dotsc, b_{i_k, \bn}, t_{i_k, \bn}.
 \]
 For $j=1,\dotsc, \bn$, and $k=0,\dotsc, N$ we denote by $B_{k,j}$ the center  of the $\ep$-pixel corresponding to the element entry $b_{i_k,j}$ in the column $C_{i_k}$. Similarly we denote by $T_{k,j}$ the center of the pixel corresponding to the entry $t_{i_k,j}$ of the column $C_{i_k}$.  For $1\leq j\leq \bn(I),
$ we set
 \[
 \eP_j(I) := \polygon(B_{0,j}, T_{0,j}, \dotsc,   B_{N,j}, T_{N,j}).
  \]
  Define
 \[
 \eP(I)=\bigcup_{j=1}^{\bn(I)} P_j(I),\;\; \eP_{\mathrm{reg}}:=\bigcup_{I\;\mathrm{regular\; interval}} P(I).
 \]
 \medskip
 Suppose now that $I=[p,q]$ is a noise interval.  We  modify the column 
 \[
 C_p= a_{p,1},\dotsc, a_{p,m}
 \]
  to a column 
  \[
  C'_p= a'_{p,1},\dotsc, a'_{p,m},
  \]
  by setting
  \[
  a'_{p,k}:= \begin{cases}
  1, & \mbox{if}\;\sum_{i=p}^q a_{i,k}>0\\
  &\\
  0, & \mbox{if}\;\sum_{i=p}^q a_{i,k}>0.
  \end{cases}
  \]
  We apply the subroutine $\stack$ to the new column $C'_p$and the output is
  \[
  \stack(C'_p)= \bn(I),\;\;b_1\leq t_1<\cdots <b_n\leq t_n.
  \]
 For $j=1,\dotsc, \bn(I)$ we  set
 \[
 B_{0,j}:=C[p,b_j],\;\; T_{0,j}:= C[p,t_j],
 \]
 \[
 B_{1,j}:=C[q,b_j],\;\; T_{0,j}:= C[q,t_j],
 \]
 (recall that $C[i,j]$ is defined as the center of the pixel associated to $(i,j)$). Next,  for $j=1,\dotsc, \bn(I)$ we  define the rectangle 
 \[
 \eR_j(I) := \polygon(B_{0,j}, T_{0,j}, B_{1,j}, T_{1,j}),
 \]
 and we set
 \[
 \eR(I)= \bigcup_{j=1}^{\bn(I)} \eR_j(I),\;\;  \eP_{\rm noise}:=\bigcup_{I\;\mathrm{noise\; interval}}R(I).
 \]
 The output of the algorithm is the polytrapezoid
  \[
  \eP_\ep(A):=\eP_{\rm regular}\cup \eP_{\rm noise}.
  \]

\newpage

\end{document}